 \numberwithin{equation}{section}
\newtheorem{theorem}{Theorem}[section]
\newtheorem{definition}[theorem]{Definition}
\newtheorem{proposition}[theorem]{Proposition}
\newtheorem{remark}[theorem]{Remark}
\newtheorem{lemma}[theorem]{Lemma}
\newtheorem{corollary}[theorem]{Corollary}
\def\eps{\varepsilon}
\newcommand{\cK}{{\cal K}}
\newcommand{\cP}{{\cal P}}
\newcommand{\cPr}{{\cP_{2,\rho}^r(\R^d)}}
\newcommand{\cW}{{\cal W}}
\newcommand{\di}{\operatorname{\text{div}}}
\newcommand{\R}{{\mathbb R}}
\title{Brake orbits and heteroclinic connections\\ for  first order Mean Field Games}
\author{ Annalisa Cesaroni and Marco Cirant}
\date{ }
\begin{document}

\maketitle
\begin{abstract}  We  consider  first order variational MFG in the whole space, with   aggregative interactions  and density constraints,   having stationary equilibria consisting of two disjoint compact sets of distributions with finite quadratic moments. Under general assumptions on the  interaction potential, we provide  a  method for the construction of periodic in time solutions to the MFG, which oscillate between the two sets of static equilibria, for arbitrarily large periods. Moreover, as the period increases to infinity, we show that these periodic solutions converge, in a suitable sense, to heteroclinic connections. As a model example, we consider a MFG system where the interactions are of (aggregative) Riesz-type. 

\medskip

\noindent
{\footnotesize \textbf{AMS-Subject Classification}}. {\footnotesize  
91A13 
37J50 
37J45 
49Q20 
35Q91 
}\\
{\footnotesize \textbf{Keywords}}. {\footnotesize   Aggregating interaction potential, Wasserstein spaces,    Infinite-dimensional Hamiltonian systems, Optimal transport, Variational methods.}
\end{abstract} 
 
\tableofcontents

\section{Introduction} 
Mean field games  (MFG) theory describes interactions among  a large numbers of indistinguishable rational individuals, in which a generic agent  optimizes  some functional depending both on its dynamical state and on the average collective behavior, represented by the density of the overall population. In an equilibrium regime, the optimal dynamics   of the average agent is consistent with the collective evolution. Such  equilibria  can be described  by a system of coupled PDEs, a backward Hamilton-Jacobi equation characterizing 
the value function of the average agent, and a forward continuity equation modelling  the evolution of the  population density, that is (in the model case of first order MFG with quadratic Hamiltonian)
\begin{equation}\label{mfg} \begin{cases} 
-\partial_t u + \frac{|\nabla u|^2}{2} = f(x,m), & \text{in $(0,T) \times \R^d$} \\ 
\partial_t m-\di(m \nabla u)=0 & \text{in $(0,T) \times \R^d$} \\ 
m\geq 0, \ \int_{\R^d} m(t,x) dx=1.
\end{cases} \end{equation} Usually the system is coupled with initial/final time conditions. 
This theory has been introduced in the mathematical community by Lasry and Lions in \cite{LL061, LL062} and since then, there has been a large development of the subject in the literature. 

Here, we will focus on the widely studied class of potential (or variational \cite{var}) MFG:  these are MFG systems that can be derived as optimality conditions of suitable optimal control problems on the continuity equation, with quadratic Lagrangian and running cost $f$.  Precisely, we assume that  $f(x,m)$ is the derivative of a potential $\cW$ defined on the space of Borel probability measures $\cP(\R^d)$, that is $f(x,m)= \frac{\delta}{\delta m} \cW (m) \in C(\R^d \times \cP(\R^d))$, or equivalently
\[\lim_{h\to 0^+} \frac{\cW(m+h(m'-m))-\cW(m)}{h}= \int_{\R^d} f(x, m)d(m'-m)(x) \] for all $m,m' \in \cP(\R^d)$.  In this case, 
the  PDE system  \eqref{mfg}  formally appears as the first order condition for critical points of the following energy functional:
\begin{equation}\label{energyintro}  J_T(m,v):= \int_0^T\int_{\R^d} \frac{1}{2}|v(t,x)|^2m(t, dx) dt +\int_0^T \cW(m(t))dt, \end{equation} 
to be computed among all possible evolutions of the mass distributions, that is among all couples  $(m,v)$ such that  $m_t-\di(mv)=0$ in the distributional sense, where $m(t) \in \cP(\R^d)$ for all $t$, and the velocity field $v\in L^2(dt\otimes m(t, dx))$.  
It is well-known that when $\cW\equiv 0$, and $m(0), m(T)$ are given, this is  the so-called fluid mechanics formulation of the Monge-Kantorovich mass transfer problem introduced by Benamou and Brenier \cite{bb}, which leads  to the dynamic characterization of the $L^2$-Kantorovich-Rubinstein-Wasserstein distance $d_2$ between measures in $\cP_2(\R^d)$ (those with finite quadratic moments in $\R^d$, see Definition \ref{wdef} and \cite{ags, Sbook} for a general discussion).  
The similarities between the Benamou-Brenier formulation of optimal transport and MFG have been already explored in the study of first order MFG systems, and we refer to \cite{gs18, gmst, ls18,  ops}.  

We will construct in this work (constrained) critical points of $J_T$, rather than produce solutions to PDE systems like \eqref{mfg}. We then show that these critical points $(\bar m, \bar v)$ give rise to mean field Nash equilibria, in the following sense: for any admissible competitor $(m, v)$,
\begin{equation}\label{linez}
\int_0^T\int_{\R^d} \frac{1}{2}|v|^2m +\int_0^T f(\bar m) m \ge \int_0^T\int_{\R^d} \frac{1}{2}|\bar v|^2\bar m +\int_0^T f(\bar m) \bar m,
\end{equation}
that is, $(\bar m, \bar v)$ is a minimizer of a {\it linearized} functional. In other words, $(\bar m, \bar v)$ is such that $(\bar m, \bar v)$ itself is the optimum in a (infinite dimensional / McKean-Vlasov) control problem involving a quadratic Lagrangian and  running cost $f(\bar m)$; such a fixed point property is at the core of Nash equilibria in MFG. In addition, following  \cite{cas16}, these equilibria yield  solutions to PDE systems of the form \eqref{mfg}. This can be shown by exploiting an optimization problem in duality with \eqref{linez}. The derivation and the analysis of first order optimality conditions in PDE form will not be carried out here; besides, our constructions of critical points of $J_T$ does not require $\cW$ to have a derivative with respect to $m$, though this assumption is crucial if one wants to make a connection with MFG systems.

Another observation which is crucial for this work is that variational MFG systems of the form \eqref{mfg} can be interpreted as Hamiltonian systems on the infinite dimensional metric space $\cP_2(\R^d)$, endowed with the distance $d_2$. In addition, the energy $J_T(m,v)$ defined  in \eqref{energyintro} can be rewritten via the Benamou-Brenier formula \cite{bb} as an energy on the space of trajectories $C([0, T],\cP_2(\R^d))$,  as follows:
 \begin{equation}\label{en2} J_T(m)= \int_0^T  \frac{1}{2} |m'|^2(t)  + \mathcal{W}(m(t)) \, dt,\end{equation} 
 where $|m'|(t)$ is the  metric derivative of the curve  with respect to the Wasserstein distance $d_2$, see \cite{ags}. In such a form, $J_T$ is reminiscent of standard action functionals appearing in Hamiltonian mechanics. Let us make a short remark on the sign in front of the ``potential'' $\cW$, to avoid possible confusion. In the form \eqref{en2}, $J_T$ formally corresponds to an action functional of a mechanical system of (infinitely many) particles subject to the acceleration $\nabla_x  \delta_m \cW$. In the classical mechanics terminology, the ``potential energy'' would then correspond to $-\cW$. We have adopted here the plus sign in line with the MFG viewpoint, where $\cW$ is the ``potential'' (anti-derivative) of the running cost $f$.

We will also make use, as in the work by Benamou and Brenier,  of the standard change of variables which replaces velocity by momentum, i.e. $(m,w)= (m, vm)$. The energy  \eqref{energyintro} then becomes, in a generalized sense,
\begin{equation}\label{enintro2} J_T(m,w) =\int_0^T\int_{\R^d} \frac12\left|\frac{dw}{dt\otimes m(t,dx)}\right|^2 m(t,dx)dt +\int_0^T \cW(m)dt, \end{equation} to be computed on the set   \begin{eqnarray}\nonumber  \cK &:=&\left\{(m,w)\ | \ m\in  C(\R, \cP_2(\R^d)),\right. 
\\&&  w \text{ is a  Borel $d$-vector measure  on $\R\times \R^d$, absolutely continuous w.r.t.  
 $dt \otimes m(t,dx)$, }\nonumber \\
&&    -\partial_t m + {\rm div}(w) = 0 \ \text{in the sense of distributions,   }  \nonumber \\
& &\left.  \int_{t_1}^{t_2} \int_{\R^d}\frac12 \left|\frac{dw}{dt\otimes m(t,dx)}\right|^2 m(t,dx)dt < \infty \ \text{for all $-\infty < t_1 < t_2 < \infty$}\right\}.\label{kappanuovo}\end{eqnarray} 
The two   energies \eqref{energyintro} and \eqref{enintro2} are equivalent, see \cite{var}. Note that under these new variables   the differential constraints become linear, that is $m_t-\di w=0$, and  moreover the function  $(m,w)\mapsto \frac{|w|^2}{2m}$ (extended to $0$ where $m=0$) is a convex function. 
In the following, we are going to consider constrained minimizers of \eqref{enintro2}, i.e. minimizers in some suitable subset of $\cK$. 
\smallskip
  
An interesting issue in MFG is the description of the long time behavior of equilibria, that is: given some information of the system at initial and final time, say at $t = 0$ and $t = T$, such as the population distribution $m$ and/or the final cost $u$, is it possible to describe $m$ (and $u$) at intermediate times? A natural goal would be to characterize attractors that are approached by $m$ as $T \to \infty$.
A large part of the literature in this direction is devoted to congestion type games, that are games in which players prefer sparsely populated areas of the state space.  This  is typically translated into the assumption that $\cW$ is convex, or equivalently that the interaction cost   $f(x,m)$ is  monotone increasing    with respect to the mass distribution:
\begin{equation}\label{aggrw}
\int_{\R^d}( f(x,m)- f(x,m'))d(m-m')(x)\geq 0\qquad \forall m,m' \in \cP_2(\R^d).
\end{equation}
We point out that this condition does not imply that the  functional $\cW$ is geodesically convex in $\cP_2$ (see \cite{ags, Sbook}): geodesic convexity of $\cW$ and monotonicity of $f$ are actually unrelated conditions.  We refer to \cite{alpaca} for a recent work on potential MFG with  geodesically convex  Lagrangians. 
Under the monotonicity assumption \eqref{aggrw}, one expects in general uniqueness of the equilibria, and some further regularity properties. For second order problems, the long time behavior of the PDE system is quite well understood (at least when the state space is the flat torus): in a long time horizon, solutions approach the (unique) stationary equilibrium, which is then attractive for the evolutive system. 
We refer to the recent papers \cite{cp19, cp20} and references therein for more details. 

On the other hand, without the monotonicity assumption, the long time behavior is much less understood and very few is known about long time patterns. The second author obtained recently some results  for viscous (second order) MFG  in the flat torus with anti-monotone interactions, that is assuming that $-f(x,m)$ is monotone increasing. 
In particular in \cite{c19} (see also \cite{cn18}),  it is provided the construction, using bifurcation arguments, of an infinite number of branches of non-trivial solutions which exhibit an oscillatory (in time) behavior, and emanating from a trivial stationary solution  (also for the case of two populations of players, which is non-variational in general). Finally, in \cite{masoero19}, by using weak KAM methods in an infinite dimensional setting, it is provided an example of a second order MFG with non monotone interaction cost, settled in the periodic torus, for which solutions in the long time horizon do not  converge to the stationary state (see also \cite{carda19} for further results). Long time pattern formation has also been explored in MFG models arising in socioeconomics \cite{hongler2018, ullmo2017, YMMS}. For first order problems, the long time behavior is even less understood (we are only aware of few results in \cite{CGra}, regarding the monotone case).

\smallskip
 
In this paper, we analyze long time patterns arising in some first order (potential) MFG. Differently with respect to previous works, our setting is first order (no viscosity), on the whole space and without periodicity conditions.
Moreover, we consider a non-monotone case, namely $\cW$ will be a sort of ``double-well'' potential.
We have in mind models where players aim at aggregating, that is, they attracted towards crowded areas, see in particular Section \ref{subsi} below. 
In contrast with the aggregating forces, we impose density constraints to the population density, that are particularly meaningful when one describes crowd motions for example. We indeed impose the distribution of players $m(t)$ to have a density which does not exceed some given value $\rho$, that is, for all $t$,
\[
m(t) \in \cPr := \{m \in \cP_2(\R^d) \ : \ \exists \ 0 \le \tilde m \le \rho \ \text{ a.e. on $\R^d$ s.t. $m = \tilde m dx$}\}
\] 
(with a slight abuse of notation, we will often identify $m$ with its density $\tilde m$) and so we restrict the set $\cK$ defined in \eqref{kappanuovo} to 
\begin{equation}\label{kappas}\cK^\rho := \{(m,w)\in \cK,  \ m(t) \in \cPr, \ \forall t\in \R\}.
\end{equation} This constraint models an environment with finite capacity. Alternatively, it could be regarded as an infinite cost paid by players that try to cluster over saturated regions (hard congestion). We mention that first order MFG with density constraints have been studied, in the monotone case, in \cite{cas16}, where connections with variational models for the incompressible Euler's equations \`a la Brenier are also discussed (see also \cite{ls18}). Another effect against concentration could be dissipation, that may appear as a viscosity term in the continuity equation for $m$. This setting has been considered recently in \cite{cc}, where stationary solutions to  second order aggregating MFG are constructed; concentration phenomena and selection type results when the dissipation term vanishes are also shown. 

Throughout the paper, we assume the following general conditions on the interaction potential $\cW : \cPr \to [0, \infty)$. Note that ``aggregation'' is not encoded explicitly in our setting. Nevertheless, if $\cW$ has to be double-well shaped in the sense described below, then $\cW$ has to break the convexity assumption (typically related to ``competition''), and therefore it has to enforce ``aggregation'' to some extent (see the model in Section \ref{subsi}). First of all we assume that $\min_{\cPr} \cW$ exists, and without loss of generality that  $\min_{\cPr} \cW = 0$. We suppose in addition that minima of $\cW$ consists of {\it two disjoint compact subsets} of $\mathcal{P}_2(\R^d)$, that is 
\begin{equation}\label{z} \tag{{\bf Z}} \exists \mathcal{M}^+, \mathcal{M}^- \subset\subset \cPr \ \text{ s.t. } \ d_2(\mathcal{M}^+, \mathcal{M}^-)
=: 2q_0>0, \ \text{ and }\ \cW(m)=0 \Leftrightarrow m\in \mathcal{M}^\pm.\end{equation}
(where $\mathcal{M}^\pm = \mathcal{M}^+ \cup \mathcal{M}^-$). We assume some standard lower semi-continuity (in a topology which is slightly weaker than the one of $\cP_2$)
\begin{equation}\label{lsc} \tag{{\bf lsc}}
\text{ for any $p<2$, $\{m_n\} \subset \cPr$, \ if \ } \lim_n d_p(m_n, m)=0 \ \text{ then } \  \liminf_n\cW(m_n)\geq \cW(m),
\end{equation}  
which will be needed to construct minimizers of \eqref{enintro2}. Note that lower semi-continuity of the kinetic part term in $J_T$ is standard by convexity (see Proposition \ref{lsck}). 
Some coercivity of $\cW$ in $\cP_2(\R^d)$ will be also needed: there exists $C_\cW>0$ such that for all $m \in \cPr$
\begin{equation}\label{b} \tag{{\bf BDD}} 
-C_\cW+C_\cW^{-1} \int_{\R^d} |x|^2 m(x)dx \leq  \cW(m)\leq C_\cW\left( 1+  \int_{\R^d} |x|^2 m(x)dx \right).
\end{equation}
  Note that \eqref{b} implies that $\cW$  has compact sublevel sets in $\cP_p(\R^d)$ for every $p<2$, see Lemma \ref{equiconv}  and Remark \ref{remequiconv},  but not necessarily for $p=2$.  
  
We finally assume the following continuity property in $\cP_2(\R^d)$ close to the zero level-set: for any $\{m_n\} \subset \cPr$,
\begin{equation}\label{c}\tag{{\bf CON}}
\text{ if \quad $\lim_n\cW(m_n)=0$, \quad  then \quad $\lim_n d_2(m_n, \mathcal{M}^\pm)=0$}.
\end{equation}
Note that if $\cW$  is assumed to be lower semicontinuous and with  compact sublevel sets in $\cP_2(\R^d)$, then \eqref{c}  follows directly from \eqref{z}. 

\smallskip

It is clear that minima $\mathcal{M}^\pm$ of $\cW$ are stationary solutions/equilibria, namely minimizers of the energy $J_T$. The main goal of this work is show that the MFG problem has other equilibria that exhibit peculiar patterns. First, we construct {\it periodic in time} critical points of $J_T$, that oscillate between stationary solutions (brake orbits). Then, we construct {\it heteroclinic connections}, that are, roughly speaking, solutions to the MFG problem which are defined for all times and approach $\mathcal{M}^-$ as $t \to -\infty$ and  $\mathcal{M}^+$ at $t \to +\infty$ (see Definition \ref{hetero}). We will exploit the fact that the potential $\cW$ in the energy \eqref{enintro2} is assumed to be a double-well potential  in $\cPr$. Written in the form \eqref{en2}, the energy can be interpreted as an action functional on the space of continuous curves with values in the metric space $\cP_2(\R^d)$, and is reminiscent of classical variational problems for finite-dimensional Hamiltonian systems.

There is a huge literature (see  the survey \cite{rabi} and references therein) on the construction of periodic or heteroclinic  trajectories in Hamiltonian systems by means of variational techniques. 
Among periodic solutions, the so-called {\it brake orbits} are widely studied. These are solutions $m$ of the Hamiltonian system in  $[t_0, t_1]$ such that $|m'|(t_0)=|m'|(t_1)=0$, which are extended to  periodic curves with period $2|t_1-t_0|$ just by reflection around $t_0$ or $t_1$.   Roughly speaking, a brake orbit travels periodically between $m(t_0)$ and $m(t_1)$, and $m(t_0)$ and $m(t_1)$ are typically close to steady states.
These orbits are usually found as periodic critical points of the action functional \eqref{en2} (with Morse index $1$ in the context of periodic perturbations), and not as global minimizers. To mode out their instability (and to circumvent the lack of compactness given by time translations in \eqref{en2}) some symmetry can be added to the system. Here, we assume  that there exists a reflection $\gamma:\R^d\to \R^d$, such that \begin{equation}\label{rif} \tag{{\bf REF}} \cW (\gamma_\# m)=\cW (m) \qquad \text{and} \qquad \mathcal{M}^+ = \gamma_\# \mathcal{M}^-,  \end{equation}
that is, $\cW$ is symmetric and vanishing on two disjoint subsets that are symmetric with respect to each other. To generate brake orbits, we then restrict to those trajectories that satisfy 
\begin{equation}\label{br} m(-t) = \gamma_\# m(t), \qquad m\left(\frac{T}4-t\right)=m\left(\frac{T}4+t\right) \qquad \forall t,
\end{equation}
so that $|m'|(T/4)=|m'|(-T/4)=0$. In this way, we construct orbits that follow back and forth the same trajectory connecting $m(-T/4)$ and $m(T/4)$ in time $T/2$, thus performing a whole cycle in time $T$. 

\smallskip
 
Before stating our results, we recall that other  extensions to the  infinite dimensional setting of  these kind of constructions has been considered quite recently  in the literature. 
The existence of heteroclinic connections in the general framework of metric spaces has been provided in \cite{ms}, under the assumption that the potential $\cW$ has  a finite numbers of zeros. The result is  obtained by a  different procedure, namely by re-parametrizing the action functional \eqref{en2} to a length functional in the metric space: then an heteroclinic connection is  a geodesic with respect the new length functional.  
Another class of infinite dimensional problems which have been considered in the literature  is related to functionals $\cW$ defined on Hilbert spaces (such as $H^1(\Omega)$, with appropriate boundary conditions) and $\cW(u)=\|\nabla u\|_{L^2(\Omega)}^2+ \int_\Omega W(x,u)dx$, where $W(x, \cdot)$ is a double well potential. In \cite{am1} (see also references therein)   the authors prove the existence of  brake orbits and also convergence to heteroclinic connections  as the period goes to infinity  by minimizing the action functional  among curves with prescribed energy.  Analogous  results have been proved in \cite{fgn}, with a different approach: 
instead of minimizing the action functional  with fixed mechanical energy, the author  minimize it  on a set of $T$ -periodic maps with fixed $T > 0$.  
In this paper, we follow the same approach as in \cite{fgn},  and as far as we know, similar constructions for MFG problems have never been studied. 

The first main result is about construction of brake orbits, and it is proved in Section \ref{secper}.  
We introduce the sets of curves on which we minimize our functional
\begin{align} \cK_T^{\rho, S}:=  \Bigl\{ (m,w) \in \cK^\rho, \ T\text{-periodic}, 
\ m\left( \frac T4 + t\right) = m\left( \frac T4 - t\right), \, m(-t) = \gamma_\# m(t), \, \forall t \in \R \Bigr\}\label{kapparhos}.
\end{align}
Then, we have the following result.
\begin{theorem}\label{teo2} Assume \eqref{z}, \eqref{lsc},  \eqref{b}, \eqref{c} and \eqref{rif}. 
Let $q \in (0,  q_0)$, where $q_0$ is defined in \eqref{z}. Then  there exists $\bar T=\bar T(q) > 4$   such that, for any $T \ge \bar T$, there exists a $T$-periodic minimizer $(m^T, w^T)\in\cK_T^{\rho, S}$ of the problem $\min_{\cK_T^{\rho, S}} J_T$. Moreover, $(m^T, w^T)$ satisfies
\begin{equation}\label{vicino}
\begin{cases}
d_2(m^T(t), \mathcal{M}^+) < q & \forall t \in \left(s, \frac T2- s\right) \\
d_2(m^T(t),\mathcal{M}^-) < q & \forall t \in \left(-\frac T2 + s,  - s\right),
\end{cases}
\end{equation}
for some $0<s =s(q)$ (note that $s(q)$ does not depend on $T$). 
\end{theorem}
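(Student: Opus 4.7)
My plan is to apply the direct method of the calculus of variations on $\cK_T^{\rho,S}$ to obtain a minimizer, and then derive the localization \eqref{vicino} from a uniform-in-$T$ energy bound combined with the $\gamma$-reflection symmetry at $t=0$, which forces $m^T(0)$ to sit at distance $\ge q_0$ from $\mathcal{M}^\pm$.

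\textbf{Step 1 (Uniform upper bound for $\inf J_T$).} I first exhibit a competitor $(\tilde m,\tilde w)\in\cK_T^{\rho,S}$ with $J_T(\tilde m,\tilde w)\le C_0$ where $C_0$ does not depend on $T$. Fixing $m^+\in\mathcal{M}^+$ and $m^-:=\gamma_\# m^+\in\mathcal{M}^-$, I construct an admissible $\cPr$-valued curve $\bar\mu:[0,\tau]\to\cPr$ with $\bar\mu(0)$ $\gamma$-invariant and $\bar\mu(\tau)=m^+$ (obtained, e.g., by symmetrizing a $\cPr$-admissible transport from $m^-$ to $m^+$ under $\gamma$), with finite kinetic cost and $\tau$ independent of $T$. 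Setting $\tilde m\equiv m^+$ on $[\tau,T/4]$, $\tilde m=\bar\mu$ on $[0,\tau]$, and extending to $[-T/2,T/2]$ by the two required symmetries $\tilde m(-t)=\gamma_\#\tilde m(t)$ and $\tilde m(T/4-t)=\tilde m(T/4+t)$ followed by $T$-periodic prolongation produces a competitor in $\cK_T^{\rho,S}$ for every $T\ge 4\tau$. Since $\cW$ vanishes on $\mathcal{M}^\pm$ only the transport windows contribute, giving a $T$-independent bound.

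\textbf{Step 2 (Direct method).} For any minimizing sequence $(m_n,w_n)\in\cK_T^{\rho,S}$ with $J_T(m_n,w_n)\le C_0+1$, the kinetic part yields the equi-H\"older-$1/2$ estimate $d_2(m_n(t),m_n(s))\le\sqrt{2C_0+2}\,|t-s|^{1/2}$. Coercivity \eqref{b} uniformly bounds the second moments, and Lemma \ref{equiconv} yields compactness of $\{m_n(t)\}$ in $\cP_p$ for every $p<2$. An Ascoli--Arzel\`a argument produces, up to a subsequence, a limit $m_n\to m^T$ uniformly in $d_p$ on $[-T/2,T/2]$, with $w_n\rightharpoonup w^T$ in the sense of vector measures. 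Lower semicontinuity of the kinetic functional (Proposition \ref{lsck}) and of $\cW$ along $d_p$-convergent sequences via \eqref{lsc} give $J_T(m^T,w^T)\le\liminf_n J_T(m_n,w_n)$. The $T$-periodicity, the two reflection symmetries defining $\cK_T^{\rho,S}$, and the density constraint $m(t)\le\rho$ are all closed under pointwise $d_p$-convergence, so $(m^T,w^T)\in\cK_T^{\rho,S}$ is a minimizer.

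\textbf{Step 3 (Localization \eqref{vicino}).} Assumption \eqref{c} immediately furnishes, by direct contradiction, a constant $\delta(q)>0$ such that $\cW(m)\ge\delta(q)$ whenever $m\in\cPr$ and $d_2(m,\mathcal{M}^\pm)\ge q$. Hence the bad set $B:=\{t\in[-T/2,T/2]:d_2(m^T(t),\mathcal{M}^\pm)\ge q\}$ satisfies $|B|\le C_0/\delta(q)$ uniformly in $T$. Evaluating $m^T(-t)=\gamma_\# m^T(t)$ at $t=0$ gives $m^T(0)=\gamma_\# m^T(0)$; combined with \eqref{z} and \eqref{rif} this yields $d_2(m^T(0),\mathcal{M}^+)=d_2(m^T(0),\mathcal{M}^-)\ge q_0>q$, so $0\in B$ and, by periodicity, $\pm T/2\in B$. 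To pin the central portion of $(0,T/2)$ to a single good component I invoke an AM--GM/Cauchy--Schwarz transition-cost estimate: each full passage between $q$-neighborhoods of $\mathcal{M}^+$ and $\mathcal{M}^-$ traverses $d_2$-distance $\ge 2(q_0-q)$ while remaining in $\{\cW\ge\delta(q)\}$, so its contribution to $J_T$ is bounded below by $2(q_0-q)\sqrt{2\delta(q)}$. Hence the number of such transitions is at most $N_0(q)=C_0/[2(q_0-q)\sqrt{2\delta(q)}]$, independent of $T$. The reflection symmetry $m^T(T/4-t)=m^T(T/4+t)$ then forces the connected good interval containing (or closest to) $T/4$ to be a single labelled component, which, up to swapping $\pm$ via \eqref{rif}, is the $\mathcal{M}^+$-component; together with the H\"older estimate and the bound on $|B|$ this component contains $(s,T/2-s)$ for some $s=s(q)$ depending only on $C_0$, $\delta(q)$, $q_0$ and the H\"older constant. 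Setting $\bar T(q):=\max(4\tau,4s(q))$ ensures nontrivial intervals, and the statement on $(-T/2+s,-s)$ follows from $m^T(-t)=\gamma_\# m^T(t)$.

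\textbf{Main obstacle.} The two subtle points are (i) in Step 1, the construction of a $\cPr$-admissible finite-energy curve between $\mathcal{M}^-$ and $\mathcal{M}^+$: Wasserstein geodesics need not respect the density ceiling $m\le\rho$, so one must exploit structural room in $\cPr$ or the explicit model of Section \ref{subsi}; and (ii) in Step 3, the quantitative pinning of the \emph{central} good interval to a single labelled component, where a plain measure bound on $B$ is insufficient and the transition-cost lower bound is needed to prevent multiple oscillations and to make $s(q)$ uniform in $T$---this uniformity is precisely what powers the later $T\to\infty$ passage to heteroclinic connections.
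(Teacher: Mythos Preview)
Your overall architecture (uniform energy bound, direct method, then localization) is reasonable, but Step~3 contains a genuine gap, and one of your ``main obstacles'' is a misconception.

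\textbf{On obstacle (i).} Wasserstein geodesics \emph{do} preserve the density ceiling: for $m_1,m_2\in\cPr$ the constant-speed geodesic $\hat m(s)$ satisfies $\|\hat m(s)\|_{L^q}\le\max\{\|m_1\|_{L^q},\|m_2\|_{L^q}\}$ for all $q>1$ by McCann's displacement convexity, and letting $q\to\infty$ gives $\|\hat m(s)\|_{L^\infty}\le\rho$. This is exactly Lemma~\ref{connettori} in the paper. So no ``structural room'' or appeal to the specific model is needed; you can connect any two elements of $\cPr$ by an admissible geodesic with the exact kinetic cost $d_2^2/\Delta t$.

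\textbf{The gap in Step 3.} Your argument controls two quantities: the total measure $|B|\le C_0/\delta(q)$ of the bad set, and the number $N_0(q)$ of \emph{full transitions} between the $q$-neighborhoods of $\mathcal{M}^+$ and $\mathcal{M}^-$. Neither of these rules out \emph{local excursions}: intervals in $B\cap(0,T/2)$ on which $m^T$ leaves the $q$-neighborhood of $\mathcal{M}^+$ and returns to it without ever approaching $\mathcal{M}^-$. Such an excursion contributes no transition to your count, and its length can be arbitrarily small (so the AM--GM lower bound degenerates). Nothing in your measure bound forces these excursions to accumulate near $t=0$ or $t=T/2$; a single excursion sitting at $t=T/4$ already violates \eqref{vicino} while keeping $|B|$ small and $N_0=0$. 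The sentence ``together with the H\"older estimate and the bound on $|B|$ this component contains $(s,T/2-s)$'' is therefore unjustified.

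\textbf{How the paper handles this.} The paper does not try to prove localization a posteriori. Instead it introduces a \emph{cut lemma} (Lemma~\ref{cruciallemma}): if a trajectory is $q'$-close to a fixed $\bar m^+\in\mathcal{M}^+$ at two times $t_1<t_2$ but exits the $q$-neighborhood of $\mathcal{M}^+$ somewhere in between, one can replace it on $(t_1,t_2)$ by a short geodesic to $\bar m^+$, a long rest at $\bar m^+$, and a short geodesic back, strictly lowering the energy. Using the symmetry at $T/4$ (so that $m_n(s)=m_n(T/2-s)$), the paper applies this to each element of the minimizing sequence, obtaining a new minimizing sequence that already satisfies \eqref{d2mpm}; the localization then passes to the limit by lower semicontinuity of $d_2$. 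You could repair your approach by applying the same cut argument \emph{directly to the minimizer} $(m^T,w^T)$: any excursion would produce a competitor with strictly smaller $J_T$, contradicting minimality. But the transition-counting estimate you wrote is not the right tool for this.
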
 
As a consequence, we show in Corollary \ref{remsimm} that there exists a brake orbit for the MFG problem in the sense of Definition \ref{bo}, that is: a Mean Field Nash equilibrium, i.e.  a minimizer as in \eqref{linez}
in the larger non-symmetric set
\begin{equation}  \cK_T^{\rho }:=  \Bigl\{ (m,w) \in \cK^\rho, \ T\text{-periodic}\Bigr\}\label{kapparho}
\end{equation}
that exhibit (symmetric) oscillations between two nearly-steady states. Note that $\cK_T^{\rho, S}$ contains no information on the ``closeness'' of $m$ to $\mathcal M^\pm$. What we prove is that minimizers of $J_T$ in $\cK_T^{\rho, S}$ have a posteriori to be close to $\mathcal M^\pm$, in the sense of \eqref{vicino}. Therefore, these nearly-steady states are close to be minimizers of the potential energy $\cW$.

\smallskip
We stress that the transition time $2s$ between (neighborhoods of) the two steady states depends on $q$ only, and remains bounded as $T \to \infty$. This is a key point in obtaining the second main result, which is about the construction of heteroclinic solutions and convergence of brake orbits to heteroclinics (that will proved in Section \ref{sechet}). Heteroclinics are equilibria (minimizers as in \eqref{linez}), connecting different sets of steady states in an infinite time horizon: 
 $\lim_{t\to -\infty} d_2(m(t), \mathcal{M}^-)=\lim_{t\to +\infty} d_2(m(t), \mathcal{M}^+)=0$ (see Definition \ref{hetero} below).

To this aim, we introduce the energy on  the whole space:
\[
J(m,w)=\int_{-\infty}^{+\infty} \int_{\R^d} \frac12\left|\frac{dw}{dt\otimes m(t,dx)}\right|^2 m(t,dx)dt +\int_{-\infty}^{+\infty}\cW(m(t)) \, dt,
\] 
 and the sets of curves 
\begin{equation}\label{kappainftyintro}
\cK^{\rho, S}:= \left\{ (m,w) \in \cK^\rho : \   \ m(-t) = \gamma_\# m(t)\ \text{ for all } t,\  J(m,w)<+\infty\right\}. 
\end{equation} Note that (see Lemma \ref{lemmat}), if $(m,w)\in \cK^{\rho,S}$,  then \[\lim_{t\to\pm\infty}d_2\left( m(t), \mathcal{M}^\pm\right)=0.\] 

We have the following result. 
\begin{theorem}\label{teo3}Assume \eqref{z}, \eqref{lsc},  \eqref{b}, \eqref{c} and \eqref{rif}. 
\begin{itemize}\item[a)] There exists a minimizer $(m,w)\in \cK^{\rho, S}$ of the problem $\min_{\cK^{\rho,S}} J$. 

\item[b)] For any $T > 0$, let $(m^T, w^T)\in\mathcal{K}_T^{\rho, S}$ be a minimizer of $J_T$ constructed in Theorem \ref{teo2}. 
Then
\[\lim_{T\to +\infty}d_2^2\left( m^T\left( \frac{T}{4}\right), \mathcal{M}^+\right)=0=\lim_{T\to +\infty}d_2^2\left( m^T\left( -\frac{T}{4}\right),\mathcal{M}^-\right), \] 
and up to passing  to subsequences $T_n\to +\infty$, there holds 
\[
\begin{split}
&m^{T_n}\to \hat m\qquad \text{locally uniformly in $C(\R, \cP_p(\R^d))$ for all $p<2$} \\
&w^{T_n}\to \hat w \qquad \text{ weakly in $L^2([-L,L]\times \R^d)$ for all $L>0$},
\end{split}
\] 
and
\[
\begin{split}
& \text{$(\hat m,\hat w) \in\mathcal{K}^{\rho, S} $ is a minimizer of the problem $\min_{\cK^{\rho,S}} J$},  \\
& J(\hat m,\hat w)=\frac{1}{2}\lim_{T\to +\infty} J_T(m^T, w^T).
\end{split}
\]
\end{itemize}
\end{theorem}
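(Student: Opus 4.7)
The strategy is to prove (a) and (b) simultaneously by analyzing the sequence of brake orbits $(m^T,w^T)\in\cK_T^{\rho,S}$ produced by Theorem \ref{teo2}, showing that, up to subsequences, they converge to a pair $(\hat m,\hat w)\in\cK^{\rho,S}$ that minimizes $J$. The first step is a uniform bound: by taking a fixed competitor in $\cK_T^{\rho,S}$ that transits between $\mathcal M^\pm$ in a universal time $2s_0$ and remains on $\mathcal M^\pm$ elsewhere (the same type of test trajectory used in the proof of Theorem \ref{teo2}), minimality gives $J_T(m^T,w^T)\le C$ uniformly in $T$. The kinetic part then forces $\{m^T\}$ to be uniformly $\tfrac12$-Hölder continuous in $d_2$ on every compact interval; combined with the moment estimate \eqref{b} (which yields tightness at each time), Ascoli-Arzel\`a delivers $m^{T_n}\to\hat m$ locally uniformly in $C(\R,\cP_p(\R^d))$ for every $p<2$, while the $L^2$-type bound on $w^T$ passes to a weak $L^2_{\mathrm{loc}}$ limit $\hat w$. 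The continuity equation, the density constraint $\hat m(t)\in\cPr$, and the symmetry $\hat m(-t)=\gamma_\#\hat m(t)$ all pass to the limit.

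For finite energy and minimality I would apply convexity of the kinetic Lagrangian (Proposition \ref{lsck}) together with \eqref{lsc} to obtain, for every $L>0$,
\[
\int_{-L}^L \left[\tfrac12 \left|\tfrac{d\hat w}{dt\otimes \hat m}\right|^2 \hat m + \cW(\hat m)\right] dt \le \liminf_n \int_{-L}^L \left[\tfrac12 \left|\tfrac{dw^{T_n}}{dt\otimes m^{T_n}}\right|^2 m^{T_n} + \cW(m^{T_n})\right] dt.
\]
Letting $L\to\infty$ and exploiting the fourfold symmetry, so that one period of $m^{T_n}$ integrates to $4\int_0^{T_n/4}[\cdots]\,dt$ while $J(\hat m,\hat w)=2\int_0^\infty[\cdots]\,dt$, gives $J(\hat m,\hat w)\le\tfrac12\liminf_n J_{T_n}(m^{T_n},w^{T_n})<\infty$, so $(\hat m,\hat w)\in\cK^{\rho,S}$, and Lemma \ref{lemmat} yields the heteroclinic endpoints. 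For minimality, given any $(m',w')\in\cK^{\rho,S}$, I build $(\bar m^T,\bar w^T)\in\cK_T^{\rho,S}$ by restricting $m'$ to $[-T/4,T/4]$ and extending by reflection about $\pm T/4$ and then $T$-periodically; endpoint matching uses $m'(0)=\gamma_\# m'(0)$, which holds in $\cK^{\rho,S}$. Since $J(m',w')<\infty$, one checks $\tfrac12 J_T(\bar m^T,\bar w^T)\to J(m',w')$, and the chain $J(\hat m,\hat w)\le\tfrac12\liminf_T J_T(m^T,w^T)\le\tfrac12\lim_T J_T(\bar m^T,\bar w^T)=J(m',w')$ proves minimality; the special choice $(m',w')=(\hat m,\hat w)$ then pins down the energy identity $J(\hat m,\hat w)=\tfrac12\lim_T J_T(m^T,w^T)$.

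The main obstacle is the pointwise convergence $d_2(m^T(\pm T/4),\mathcal M^\pm)\to 0$, because $T/4$ leaves every compact interval and is not seen by the local uniform convergence. I would argue by contradiction: suppose $d_2(m^{T_n}(T_n/4),\mathcal M^+)\ge\delta$ along a subsequence. Choose $L$ so that $d_2(\hat m(L),\mathcal M^+)<\delta/4$ and $\int_L^\infty [\tfrac12|\hat m'|^2+\cW(\hat m)]\,dt<\varepsilon$, possible by Lemma \ref{lemmat} and $J(\hat m,\hat w)<\infty$. The energy identity $J_T(m^T,w^T)\to 2J(\hat m,\hat w)$ combined with the liminf inequality on $[0,L]$ forces $\int_L^{T/4}[\tfrac12|(m^T)'|^2+\cW(m^T)]\,dt\to\int_L^\infty[\tfrac12|\hat m'|^2+\cW(\hat m)]\,dt$, hence arbitrarily small. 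On the other hand, the uniform Hölder equicontinuity of $m^T$ produces an interval around $T/4$ of universal length $\eta=\eta(\delta)$ on which $d_2(m^T,\mathcal M^+)\ge\delta/2$; combined with $d_2(m^T,\mathcal M^-)\ge q_0$ (from \eqref{vicino} and \eqref{z}) and the contrapositive of \eqref{c} (which gives $c(\delta)>0$ with $\cW\ge c(\delta)$ on $\{d_2(\cdot,\mathcal M^\pm)\ge\min(\delta/2,q_0)\}$), this forces $\int_L^{T/4}\cW(m^T)\,dt\ge c'(\delta)>0$, a contradiction. Since the conclusion holds for every $\delta>0$, the full limit vanishes, and the $\mathcal M^-$ statement follows by $\gamma$-symmetry.
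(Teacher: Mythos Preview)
Your argument is essentially correct and follows the same scheme as the paper: a uniform energy bound for $(m^T,w^T)$, Ascoli--Arzel\`a compactness, lower semicontinuity on compact time intervals, and the reflection competitor $(\bar m^T,\bar w^T)$ built from an arbitrary $(m',w')\in\cK^{\rho,S}$ to close the chain of inequalities. Two differences with the paper are worth noting.

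\emph{Structure of (a) versus (b).} The paper proves (a) independently, by running the direct method on a minimizing sequence in $\cK^{\rho,S}$, and only afterwards shows in (b) that subsequential limits of brake orbits are also minimizers (by comparing to the minimizer already obtained). Your route---deriving (a) as a byproduct of the limit analysis of $(m^T,w^T)$---is slightly more economical and works just as well, since your competitor argument is run against an arbitrary $(m',w')$ rather than a pre-existing minimizer.

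\emph{The convergence at $t=\pm T/4$.} Here you work much harder than necessary. The paper dispatches this in two lines using the very conclusion \eqref{vicino} of Theorem~\ref{teo2} that you yourself invoke: that estimate holds for \emph{every} $q\in(0,q_0)$, with associated $s(q)$ and $\bar T(q)$. Since $T/4\in(s(q),T/2-s(q))$ as soon as $T>\max\{4s(q),\bar T(q)\}$, one gets $d_2(m^T(T/4),\mathcal{M}^+)<q$ directly, and letting $q\downarrow 0$ gives the full limit. Your contradiction argument (energy identity on $[0,L]$, tail smallness, a H\"older ball around $T/4$, then a lower bound on $\cW$ via \eqref{c}) is valid, but it is circuitous and in fact self-undermining: the moment you appeal to \eqref{vicino} to secure $d_2(m^T,\mathcal{M}^-)\ge q_0$ near $T/4$, you already have $d_2(m^T(T/4),\mathcal{M}^+)<q$ from the same inequality, which is the desired conclusion. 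A minor imprecision: from the energy identity plus lower semicontinuity on $[0,L]$ you only get $\limsup_n\int_L^{T_n/4}[\cdots]\,dt\le\int_L^\infty[\cdots]\,dt$, not convergence; this limsup bound is all you need for the contradiction.
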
 
As a consequence, we obtain that minimizers obtained above are heteroclinic connections for the MFG in the sense of Definition \ref{hetero}, see Corollary \ref{ultimoc}.

\smallskip

We make a few final remarks in light of the two results. As we previously mentioned, the unique minimizer of $\cW$ is an attractor of MFG equilibria under the monotonicity assumption \eqref{aggrw}. If one drops \eqref{aggrw}, the picture may then change substantially. Heteroclinics produced here connect two different minimizers of $\cW$; hence, the state of the system can be arbitrarily close to a minimum (with respect to $d_2$) of $\cW$, and converge to a different steady state as $t \to \infty$. A further study of stability of minimizers of $\cW$ can be matter of future work.

 Note again that brake orbits and heteroclinics obtained in Theorems \ref{teo2} and \ref{teo3} yield solutions to MFG systems of the form \eqref{mfg} in a suitable weak sense. The connection between the variational formulation and the PDE system for first order problems has been extensively studied in \cite{cas16}, and adaptions to our framework may require minor technical work. 
 
 We finally note that to obtain Theorems \ref{teo2} and \ref{teo3} it is not really necessary to work with absolutely continuous measures that are bounded in $L^\infty$, that is on $\cPr$. Using the same techniques in Wasserstein spaces, it would be possible to restate them in $\cP_2$ (reformulating accordingly all the assumptions). We present anyway our general results in $\cPr$ to accomodate the following ``aggregation'' model, that is a main motivation of this work. Without the density constraint, such a model would become probably more trivial, and surely more far from describing motions of crowds (because of likely formation of singularities). Similarly, it is not really necessary to set up everything on the whole euclidean space: analogous results could be proven on bounded domains or  for periodic in space probabilities. Still, we wanted to modify the setting of previous works, to show that non-trivial long time patterns may arise in a non-periodic environment (and without the presence of viscosity).


 
\subsection{A model problem}\label{subsi}  Finally, we present a model problem where our results apply. We consider a vatiational MFG where the potential term $\cW$   is given by 
 \begin{equation}
\label{enintrosta}\mathcal{W}(m)=\int_{\R^d} W(x)m(dx)-\int_{\R^d}\int_{\R^d} K(|x-y|)m(dx)m(dy).
\end{equation}  Note that in this case $f(x,m)= \delta_m \cW = W(x)-2\int_{\R^d}K(|x-y|)m(dy)$. 
The first part of  the energy is a potential energy, where  $W:\R^d\to [0, +\infty)$ is a ``double-well'' confining function, symmetric under a reflection $\gamma$, vanishing on two disjoint  balls $B_1, B_2$ (with $B_1 = \gamma(B_2)$), and  quadratically increasing at infinity: see the assumption \eqref{assw}.  The function $W(\cdot)$ models the spatial preference of the agents. 
The second part of the energy is an interaction energy, modeled through the interaction kernel $-K$.  $K$ is assumed to be positive definite, 
radially symmetric,  locally integrable and increasing at zero (in an appropriate sense), see 
\eqref{assK} and \eqref{pos}. In particular a model class of  such interaction kernels $K$ is given by the Riesz kernels 
\begin{equation}\label{Riesz}
K(|x-y|)=\frac{1}{|x-y|^{d-\alpha}},\text{  \quad with $\alpha\in (0,d)$. }
\end{equation}
For instance, \eqref{enintrosta} can be associated to a crowd of agents that aim at minimizing their reciprocal distances (with spatial preference $W$).  Note that the {\it aggregation} effect plays against the hard density constraint on $m$: no further aggregation is possible whenever $m = \rho$.
Energies like \eqref{enintrosta} have been recently studied extensively, as they are directly connected to a class of self-assembly/aggregation models which have received much attention, see e.g. \cite{cho} and references therein. 

It is possible to show, see Section \ref{secmodel}, that under the previous assumptions on $W, K$, $\cW$ defined in \eqref{enintrosta} satisfies \eqref{b}, \eqref{lsc}, \eqref{c}, \eqref{rif}. Regarding the general assumption \eqref{z}, we are indeed able to provide a full description of minimizers of $\cW$, and therefore our results apply. Informally speaking, the description of minimizers can be stated as follows. For precise statements, see Theorem \ref{thmminima} and Proposition \ref{classiminimi}.

\begin{theorem}\label{teointro1}  Under the assumptions that W is symmetric, vanishing on two disjoint balls and with quadratic growth, and $K$ behaves like \eqref{Riesz} (i.e. \eqref{assw}, \eqref{ref}, \eqref{assK}, \eqref{pos}), there exist minimizers of \eqref{enintrosta} in $\cPr$,
and all the minimizers are given by characteristic functions (multiplied by $\rho$) of compact sets in $\R^d$. 

If, in addition, the flat zones of minima of $W$ are sufficiently large in terms of $\rho$ (i.e. \eqref{ass3} holds), then all the minimizers consists of two compact disjoint sets $\mathcal{M^\pm}\subset \cPr$, symmetric with respect to each other, whose elements are characteristic functions of balls (multiplied by $\rho$). 
\end{theorem}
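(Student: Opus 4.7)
The plan is to establish the two parts in sequence: first existence plus the bang-bang structure of minimizers, then the finer geometric characterization under \eqref{ass3}.

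Existence comes from the direct method on $\cPr$. The quadratic growth of $W$ at infinity combined with the local integrability of $K$ gives the coercivity bound \eqref{b}, so minimizing sequences have uniformly bounded second moments and are tight in $\cP_p$ for every $p<2$. The lower semicontinuity \eqref{lsc} (verified on the linear part by dominated convergence and on the interaction term by a Fatou argument together with a cutoff near the singularity of $K$) then allows one to pass to the limit. The key structural observation is that $\cW$ is strictly concave on $\cPr$. Indeed, a direct computation gives
\begin{equation*}
\cW\!\left(\frac{m_1+m_2}{2}\right)-\frac{\cW(m_1)+\cW(m_2)}{2}=\frac{1}{4}\int\!\!\int K(|x-y|)(m_1-m_2)(dx)(m_1-m_2)(dy)\geq 0,
\end{equation*}
by the positive-definiteness assumption \eqref{pos}, with strict inequality as soon as $m_1\neq m_2$ for Riesz-type kernels.

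Strict concavity forces every minimizer to be an extreme point of the convex set $\cPr$. A standard $L^1$ mass-swap argument (perturb by $\pm\eta(\chi_{E_1}-\chi_{E_2})$ on level sets $\{0<m<\rho\}$) identifies these extreme points as exactly the characteristic functions $\rho\chi_A$ with $|A|=1/\rho$, yielding the bang-bang claim. Essential boundedness of $A$ then follows from the first-order optimality condition at the minimizer: there exists $\lambda\in\R$ with $W(x)-2(K*m)(x)\leq \lambda$ a.e.\ on $A$ and $\geq \lambda$ a.e.\ on $A^c$; since $K*m$ is bounded (by tightness of $m$, integrability of $K$ near $0$, and decay of $K$ at infinity) while $W(x)\to+\infty$, the sublevel set $\{W-2K*m\leq\lambda\}$ is bounded, so $A$ is essentially compact.

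For the second part, under \eqref{ass3} each flat zone $B_1,B_2$ of $W$ accommodates a ball of volume $1/\rho$. Decompose $A=A_1\sqcup A_2\sqcup A_3$ with $A_i=A\cap B_i$ and $A_3=A\setminus(B_1\cup B_2)$. Any mass in $A_3$ incurs a strictly positive cost in the $W$-term that can be strictly decreased by relocating it inside $B_1\cup B_2$ (without worsening the interaction, by the radial monotonicity of $K$), so $A_3$ must be negligible. Given $A\subset B_1\cup B_2$, the Riesz rearrangement inequality shows that among measurable subsets of a fixed ball with prescribed volume the self-interaction $\int\!\!\int K(|x-y|)dxdy$ is maximized by a concentric ball, so each non-trivial $A_i$ must coincide, up to a null set, with a ball $B(x_i,r_i)\subset B_i$. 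To rule out splittings with $|A_1|,|A_2|>0$ one invokes superadditivity of $v\mapsto \int\!\!\int_{B(0,r_v)\times B(0,r_v)}K(|x-y|)dxdy$ (a consequence of scaling/convexity for Riesz-type $K$), combined with the smallness of the cross-interaction between balls sitting in the distinct zones $B_1,B_2$ (which are at a fixed positive distance). Concentrating the whole mass into a single flat zone is therefore strictly better; the reflection symmetry \eqref{ref} pairs these ball-minimizers into two disjoint compact families $\mathcal{M}^+$ and $\mathcal{M}^-$.

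The hardest step will be this last one: quantifying the threshold in \eqref{ass3} so as to simultaneously ensure that the extremal ball fits inside each flat zone, that no mass escapes $B_1\cup B_2$, and that splittings across the two wells are strictly suboptimal. This rests on a delicate comparison between the superadditive growth of the ball self-interaction (favoring concentration within one well) and the decay of the cross-kernel between $B_1$ and $B_2$ (forcing well-separation), and is likely to require kernel-specific estimates that go beyond the abstract properties \eqref{assK}--\eqref{pos}, in particular for kernels close to the critical Riesz scaling.
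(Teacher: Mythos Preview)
Your existence argument and the bang-bang conclusion via strict concavity of $\cW$ on $\cPr$ are correct, and the concavity route is actually cleaner than what the paper does: the paper proves $m\in\{0,\rho\}$ a.e.\ by a second-variation computation (Proposition~\ref{propminima}), testing against bump functions localized near two hypothetical intermediate-density points and using the strict decrease condition in \eqref{assK} to reach a contradiction. Your observation that $\cW$ is strictly concave (linear part affine, $-\mathcal{I}$ strictly concave by \eqref{pos}) and hence minimized only at extreme points of $\cPr$ gives the same conclusion more directly. The boundedness of the support via the Euler--Lagrange relation is essentially identical to the paper's argument.

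Where your proposal goes wrong is the second part, and the difficulty you flag at the end is real: your decomposition $A=A_1\sqcup A_2\sqcup A_3$ followed by a ``superadditivity vs.\ cross-interaction'' comparison does not close without additional quantitative hypotheses. The distance between the two wells is \emph{fixed} by the data, so the cross term $2\rho^2\int_{A_1}\int_{A_2}K(|x-y|)\,dx\,dy$ is not small in any asymptotic sense; for wells that are not far apart (nothing in \eqref{assw} forbids this) the gain from superadditivity of the self-interaction need not dominate it. So the argument as written has a genuine gap.

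The paper avoids this entirely by a much shorter route (Proposition~\ref{classiminimi}). Under \eqref{ass3} a ball of volume $1/\rho$ fits inside each flat zone, so $\rho\chi_{B(a^+,r_\rho)}$ is admissible with $\int W=0$, giving $\min_{\cPr}\cW=-\mathcal{I}(\rho\chi_{B_{r_\rho}})$. Now any minimizer $\rho\chi_E$ satisfies
\[
\int_E W\,dx-\mathcal{I}(\rho\chi_E)=-\mathcal{I}(\rho\chi_{B_{r_\rho}}),\qquad \int_E W\,dx\ge 0,\qquad \mathcal{I}(\rho\chi_E)\le \mathcal{I}(\rho\chi_{B_{r_\rho}}),
\]
the last being Lemma~\ref{cKbounds} (Riesz rearrangement). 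Hence both inequalities are equalities: $\int_E W=0$ forces $E\subset B(a^+,\tilde r)\cup B(a^-,\tilde r)$, and $\mathcal{I}(\rho\chi_E)=\sup_{\cPr}\mathcal{I}$ is the equality case of the Riesz rearrangement inequality, which (by the strict monotonicity encoded in \eqref{assK}) forces $E$ to be a single ball up to a null set. A ball contained in the union of two disjoint balls lies in one of them, so $E=B(x',r_\rho)\subset B(a^\pm,\tilde r)$. No splitting analysis or kernel-specific estimate is needed; replace your last two paragraphs by this equality-case argument.
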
 
 
So, in the case described in Theorem \ref{teointro1},  Theorems \ref{teo2} and Theorem \ref{teo3} apply and we may construct brake orbits and heteroclinic solutions. \smallskip

Some interesting issues, in our opinion, are left open for this model problem. In particular, we know by Theorem \ref{teointro1} that stationary minimal solutions to the MFG problem are given by characteristic functions, i.e. $m=\rho\chi_E$, where $E$ is a compact set. A natural  question is whether or not minimizers (or even critical points) of the evolutive energy \eqref{enintro2}  enjoy these two features; that is, time-dependent equilibria have compact support and are evolving characteristic functions. In other words, given a periodic brake orbit $(m^T, w^T)$ as in Theorem \ref{teo2}, or a minimal heteroclinic connection $(m, w)$ as in Theorem \ref{teo3}, is it true that $m^T(t), m(t)$ are  characteristic functions of a family of evolving compact sets $E_t$ for all times ? At the moment a full answer to this question seems far to be reached.  

Another natural related problem is the version  of the game with a large (but finite) number of players:  MFG can be interpreted (and derived) indeed as limiting models for large populations of interacting agents, where any given individual is affected by the averaged state of the other individuals. In the companion work \cite{ccprep},  we consider the analogous variational problem involving the energy \eqref{energyintro} of a finite number of interacting particles, where the density constraint appears as a bound from below on the minimal distance between particles (being in turn inversely proportional to the number of particles $N$). First of all, we formalize the connection between the discrete $N$-particles problem and the continuous MFG model by proving a $\Gamma$-convergence type result, as $N\to +\infty$, of the energies, in the same spirit of  \cite{flos}. Moreover, we show that for the $N$-particle system, at least in the $1$-dimensional case, periodic minimizers are compactly supported, and particles minimize reciprocal distances. This gives a partial answer to our question (again, at least in dimension one), namely we provide the existence of limiting brake orbits for the continuous problem that are time-dependent characteristic functions. 

 \subsection*{Acknowledgements} The authors wish to thank the anonymous referees for many comments and suggestions, which were useful to improve the overall presentation of the paper. The authors are partially supported by the Fondazione CaRiPaRo Project ``Nonlinear Partial Differential Equations: Asymptotic Problems and Mean-Field Games'', and are members of GNAMPA-INdAM. The second author has been partially supported by the Programme ``FIL-Quota Incentivante'' of University of Parma and co-sponsored by Fondazione Cariparma.

 \subsection*{Notation}  We wil denote by $B(x,r)\subseteq \R^d$ the ball centered at $x$ and with radius $r$, $B_r= B(0,r)$  and $\omega_d=|B_1|$. 
For any  measurable set $E\subseteq \R^d$, we define $\chi_E$ to be the characteristic function of $E$.  

$\cP(\R^d)$, $\cP_p(\R^d)$ and $\cPr$ are (sub)sets of Borel probability measures defined below (see Section \ref{secassumption}). For any set $\mathcal{M} \subset \cP_2(\R^d)$, $d_2(\mu, \mathcal{M})=\inf_{m \in \mathcal{M} } d_2(\mu,m)$.  


\section{The Wasserstein spaces}  \label{secassumption} We introduce some notions for calculus in Wasserstein spaces that will be useful in the following. For a general reference on these results we refer to \cite{ags}, \cite{Sbook}. First, let $ \mathcal{P}(\R^d)$ be the space of Borel probability measures on $\R^d$, endowed with the topology of narrow convergence, that is:
 \begin{definition}  Let $\mu_k, \mu\in \mathcal{P}(\R^d)$.  We say that $\mu_k\to \mu$ narrowly if 
 \begin{equation}\label{narrowly}
 \lim_k \int_{\R^d}g(x)\mu_k(dx)= \int_{\R^d} g(x) \mu(dx)\qquad \forall g\in C_b(\R^d),
 \end{equation}
where $C_b(\R^d)$ is the space of continuous and bounded functions on $\R^d$.  
 \end{definition} 
 Note that this notion of convergence is equivalent to the one of convergence in the sense of distributions (see \cite[Remark 5.1.6]{ags}), where one has \eqref{narrowly} for all smooth and compactly supported test functions $g\in C^\infty_0(\R^d)$.
 
\begin{definition}\label{wdef}  
Let $p\geq 1$. The Wasserstein space of Borel probability measures  with bounded $p$-moments is defined by
\[\mathcal{P}_p(\R^d)=\left\{\mu\in \mathcal{P}(\R^d)\ \Big| \int_{\R^d} |x|^p d\mu(x)<+\infty\right\}.\] 

The Wasserstein space can be endowed with the $p$-Wasserstein distance
\begin{equation}\label{wassdis} d^p_p(\mu, \nu)= \inf\left\{\int_{\R^d} \int_{\R^d} |x-y|^p d\pi(x,y)\ |\ \pi\in \Pi(\mu, \nu)\right\}\end{equation} 
where $\Pi(\mu,\nu)$  is the set of  Borel probability measures $\pi$ on $\R^d\times \R^d$ such that $\pi(A\times\R^d)=\mu(A)$ and $\pi(\R^d\times A)=\nu(A)$ for  any Borel set $A\subseteq \R^d$.  
 \end{definition} 
 
 Note that $\mathcal{P}_q(\R^d)\subset \mathcal{P}_p(\R^d)$ for $p<q$, and by Jensen inequality, $d_p(\mu, \nu)\leq d_q(\mu, \nu)$ for $p<q$.  We then recall the following results about narrow convergence and convergence in Wasserstein spaces. 
\begin{lemma}\label{lemmaconv} Let $\mu_k, \mu\in \mathcal{P}(\R^d)$ such  that $\mu_k$ converges to $\mu$ narrowly. \begin{enumerate}
\item Let $g:\R^d\to [0, +\infty]$ be lower semicontinuous. Then  \[\liminf_k \int_{\R^d} g(x)d\mu_k(x)\geq \int_{\R^d} g(x) d\mu(x). \] 
\item Let $g:\R^d\to [0, +\infty)$, continuous and $\mu_k$-integrable, be such that \[\limsup_k \int_{\R^d} g(x)d\mu_k(x)\leq \int_{\R^d} g(x) d\mu(x)<\infty.\] 
Then, $g$ is uniformly integrable with respect to $\mu_k$, that is
\[\lim_{R\to +\infty} \sup_{k}\int_{\{x\ | g(x)\geq R\}}  g(x)d\mu_k(x)=0.\]\end{enumerate} 
\end{lemma}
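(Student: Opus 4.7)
The plan is to prove the two parts separately by reducing everything to the definition of narrow convergence via bounded continuous test functions.

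For part (i), my approach is the standard monotone approximation of a nonnegative lsc function by continuous bounded functions from below. I would set
\[
g_n(x) := \min\Bigl\{\,n,\; \inf_{y \in \R^d}\bigl(g(y) + n|x-y|\bigr)\Bigr\}.
\]
Each $g_n$ is bounded by $n$, is $n$-Lipschitz (in particular continuous), and the sequence $g_n$ increases pointwise to $g$ as $n \to \infty$ thanks to the lower semicontinuity of $g$. Since $g_n \in C_b(\R^d)$, narrow convergence gives $\int g_n\,d\mu_k \to \int g_n\,d\mu$. Because $g_n \le g$, we obtain
\[
\liminf_k \int_{\R^d} g\,d\mu_k \;\ge\; \lim_k \int_{\R^d} g_n\,d\mu_k \;=\; \int_{\R^d} g_n\,d\mu,
\]
and then monotone convergence as $n \to \infty$ yields the desired estimate.

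For part (ii), I would build a continuous cutoff through $g$ itself. Pick $\phi \in C([0,\infty),[0,1])$ with $\phi \equiv 1$ on $[0,1]$ and $\phi \equiv 0$ on $[2,\infty)$, and for $R>0$ set $\psi_R(x) := \phi(g(x)/R)$; since $g$ is continuous, both $\psi_R$ and $g\psi_R$ lie in $C_b(\R^d)$ (with $\|g\psi_R\|_\infty \le 2R$). Note that $g(x) \ge R$ forces $\psi_{R/2}(x)=0$, hence $g\,\chi_{\{g \ge R\}} \le g(1-\psi_{R/2})$. Therefore
\[
\int_{\{g \ge R\}} g\,d\mu_k \;\le\; \int_{\R^d} g\,d\mu_k - \int_{\R^d} g\,\psi_{R/2}\,d\mu_k.
\]
Applying narrow convergence to $g\,\psi_{R/2} \in C_b(\R^d)$ and the hypothesis $\limsup_k \int g\,d\mu_k \le \int g\,d\mu$ gives
\[
\limsup_k \int_{\{g \ge R\}} g\,d\mu_k \;\le\; \int_{\R^d} g(1-\psi_{R/2})\,d\mu,
\]
and the right-hand side tends to $0$ as $R \to \infty$ by dominated convergence, since $g \in L^1(\mu)$ and $1-\psi_{R/2} \to 0$ pointwise.

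To pass from this large-$k$ estimate to true uniformity in $k$, I would argue as follows: given $\varepsilon>0$, fix $R_0$ large enough that $\int g(1-\psi_{R/2})\,d\mu < \varepsilon/2$ for $R \ge R_0$; then there exists $k_0$ with $\int_{\{g\ge R\}} g\,d\mu_k < \varepsilon$ for all $k \ge k_0$ and $R \ge R_0$. For each of the finitely many remaining indices $k < k_0$, the $\mu_k$-integrability of $g$ gives $\int_{\{g \ge R\}} g\,d\mu_k \to 0$ as $R \to \infty$, so enlarging $R_0$ handles these as well. The main delicate point is precisely this last step---narrow convergence alone only controls large $k$, so uniform integrability on the whole sequence requires the separate treatment of the initial terms; the rest is a routine combination of cutoff arguments with the Portmanteau-type characterization of narrow convergence.
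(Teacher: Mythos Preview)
Your argument is correct in both parts. For (i) you use the standard Moreau--Yosida regularisation to approximate a nonnegative lower semicontinuous $g$ from below by bounded Lipschitz functions, and for (ii) you smoothly truncate through the level sets of $g$ and combine narrow convergence of the truncated integrals with the hypothesis on $\limsup_k \int g\,d\mu_k$; your final step, treating the finitely many initial indices $k<k_0$ separately via the assumed $\mu_k$-integrability of $g$, is exactly what is needed to upgrade the $\limsup$ estimate to genuine uniformity over all $k$.

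The paper itself does not give an independent proof of this lemma: it simply refers the reader to \cite[Lemma~5.1.7]{ags}. Your write-up therefore supplies a self-contained elementary argument where the paper relies on an external reference. The content is the same classical fact, and the route you take (inf-convolution for (i), continuous cutoff $\phi(g/R)$ for (ii)) is essentially the standard one; what you gain is that the reader does not need to consult \cite{ags}, at the cost of a few extra lines.
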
 \begin{proof}We refer to \cite[Lemma  5.1.7]{ags}.
\end{proof} 
\begin{lemma}\label{equiconv}
  $\mathcal{P}_p(\R^d)$ endowed
with the p-Wasserstein distance is a separable complete  metric space.  A set $\mathcal{M}\subset \mathcal{P}_p(\R^d)$  is relatively compact if and only if it has uniformly integrable  $p$-moments, that is 
 \[\lim_{R\to +\infty} \sup_{\mu\in \mathcal{M}}\int_{\R^d\setminus B(0, R)} |x|^pd\mu(x)=0.\] 
Let  now $\mu_k,\mu\in \mathcal{P}_p(\R^d)$ for some $p\geq 1$. Then the statements below are equivalent:
\begin{enumerate}
\item $d_p(\mu_k, \mu)\to 0$
\item $\mu_k$ converges to $ \mu$ narrowly and  $\mu_k$ have uniformly integrable $p$-moments.
\end{enumerate} 

Finally, for any $\nu\in \cP_p(\R^d)$, the map $\mu\to d_p(\mu, \nu)$ is lower semicontinuous with respect to narrow convergence. 

\end{lemma}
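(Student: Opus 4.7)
The plan is to establish the four claims in an order that avoids circular reasoning, since completeness and the compactness characterization both want to invoke the equivalence (1)$\Leftrightarrow$(2). I would therefore first prove (1)$\Leftrightarrow$(2) directly from the coupling definition of $d_p$ together with Lemma \ref{lemmaconv}, and then deduce separability/completeness, relative compactness, and the semicontinuity of $\mu\mapsto d_p(\mu,\nu)$ as consequences.

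For (1)$\Rightarrow$(2), I would pick optimal plans $\pi_k\in\Pi(\mu_k,\mu)$ (existence by the direct method, since the marginal constraint is narrowly closed and the cost $|x-y|^p$ is lower semicontinuous). Narrow convergence of the $x$-marginals follows by testing any $\varphi\in C_b(\R^d)$ against $\mu_k$ via the coupling: $|\int\varphi d\mu_k-\int\varphi d\mu|\leq\omega_\varphi(d_p(\mu_k,\mu))$ on compact sets after a tightness truncation. Uniform integrability of $|x|^p$ under $\mu_k$ comes from $|x|\leq|x-y|+|y|$, the inequality $(a+b)^p\leq 2^{p-1}(a^p+b^p)$, and the fact that $\int|x-y|^p d\pi_k\to 0$ while $|y|^p$ is integrable against the single measure $\mu$. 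For (2)$\Rightarrow$(1), I would take any optimal $\pi_k\in\Pi(\mu_k,\mu)$; tightness of the marginals forces tightness of $\{\pi_k\}$ on $\R^d\times\R^d$, and any narrow limit $\pi$ lies in $\Pi(\mu,\mu)$, hence is concentrated on the diagonal. Uniform integrability of $|x|^p$ under $\mu_k$ (and of $|y|^p$ under $\mu$) yields uniform integrability of $|x-y|^p$ under $\pi_k$, so by Lemma \ref{lemmaconv}(ii) one may pass to the limit: $\int|x-y|^p d\pi_k\to \int|x-y|^p d\pi=0$.

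Separability follows by approximating $\mu\in\cP_p(\R^d)$ first by its restriction to a large ball (small $p$-moment tail), then by a convex combination with rational weights of Dirac masses centered at points of $\Q^d$ obtained from a fine partition of $B_R$; one estimates $d_p$ by a product coupling supported within grid cells. Completeness follows because a $d_p$-Cauchy sequence has bounded $p$-moments (triangle inequality), hence is tight by Markov's inequality applied to $|x|^p$ (which is a proper function), hence admits a narrow limit $\mu$ by Prokhorov; lower semicontinuity of $\int|x|^p d\mu_k$ puts $\mu\in\cP_p$, and uniform integrability of the Cauchy sequence (from the already proved (2)$\Rightarrow$(1) together with the Cauchy condition) promotes convergence to $d_p$-convergence.

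For the relative compactness characterization, uniform integrability of $p$-moments implies both a uniform bound on $p$-moments (take $R$ large and add the contribution over $B_R$) and tightness, so Prokhorov gives a narrowly convergent subsequence; (2)$\Rightarrow$(1) promotes it to $d_p$-convergence. Conversely, a $d_p$-relatively compact set $\mathcal M$ admits a finite $\varepsilon$-net $\{\nu_1,\dots,\nu_N\}$ in $d_p$; for any $\mu\in\mathcal M$ with $d_p(\mu,\nu_i)<\varepsilon$ one controls $\int_{\R^d\setminus B_R}|x|^p d\mu$ in terms of $\varepsilon^p$ plus $\int_{\R^d\setminus B_{R/2}}|x|^p d\nu_i$ by a coupling inequality, and the finite family $\{\nu_i\}$ has uniformly integrable $p$-moments trivially. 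Finally, lower semicontinuity of $d_p(\cdot,\nu)$ under narrow convergence $\mu_k\to\mu$ follows by choosing optimal $\pi_k\in\Pi(\mu_k,\nu)$ with bounded cost (otherwise the liminf is $+\infty$ and the inequality is trivial), extracting a narrow limit $\pi\in\Pi(\mu,\nu)$, and applying Lemma \ref{lemmaconv}(i) to the lower semicontinuous non-negative cost $|x-y|^p$. The main obstacle I anticipate is precisely the ordering issue mentioned at the start: ensuring that the tightness/uniform-integrability estimates used in (2)$\Rightarrow$(1) are derived purely from the coupling and not from completeness, so that completeness can subsequently be deduced without logical loop.
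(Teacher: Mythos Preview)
The paper does not actually prove this lemma: it simply cites \cite[Prop.~7.1.5]{ags} for the metric and compactness statements and \cite[Prop.~7.1.3]{ags} for the lower semicontinuity of $d_p(\cdot,\nu)$. Your sketch is essentially the standard argument one finds in those references, and most of it is sound.

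There is, however, one genuine slip in your direction (2)$\Rightarrow$(1). You write that any narrow limit $\pi$ of the optimal plans $\pi_k$ ``lies in $\Pi(\mu,\mu)$, hence is concentrated on the diagonal.'' Membership in $\Pi(\mu,\mu)$ alone does \emph{not} force concentration on the diagonal: $\mu\otimes\mu\in\Pi(\mu,\mu)$ is a counterexample whenever $\mu$ is not a Dirac mass. What is missing is the stability of optimality: a narrow limit of optimal plans is itself optimal, which follows from the lower semicontinuity of $\pi\mapsto\int|x-y|^p\,d\pi$ under narrow convergence combined with $d_p(\mu_k,\mu)\to d_p(\mu,\mu)$ along the subsequence (or simply $\int|x-y|^p\,d\pi\le\liminf_k d_p^p(\mu_k,\mu)$ together with any upper bound). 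Once you know $\pi$ is optimal in $\Pi(\mu,\mu)$ you obtain $\int|x-y|^p\,d\pi=d_p^p(\mu,\mu)=0$, and then your uniform-integrability step legitimately gives $d_p^p(\mu_k,\mu)=\int|x-y|^p\,d\pi_k\to 0$. A minor related point: Lemma~\ref{lemmaconv}(ii) as stated in the paper goes in the opposite direction (it deduces uniform integrability from a $\limsup$ inequality); the implication you actually use---narrow convergence plus uniform integrability of a continuous nonnegative $g$ implies $\int g\,d\mu_k\to\int g\,d\mu$---is the companion statement in \cite[Lemma~5.1.7]{ags}, not the part quoted in Lemma~\ref{lemmaconv}.
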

\begin{proof} We refer to  \cite[Prop. 7.1.5]{ags}. Note that if $\mathcal{M}$ has uniformly integrable $p$-moments then it is tight, i.e. for all $\eps>0$ there exists 
 $K_\eps\subseteq \R^d$  compact for which  $\sup_{\mu\in \mathcal{M}}\int_{\R^d\setminus K_\eps}d\mu(x)\leq \eps$. 
 
 The lower semicontinuity of the Wasserstein distance is proved in \cite[Proposition 7.1.3]{ags}. 
\end{proof}
\begin{remark}\label{remequiconv} \upshape 
Note that, if for some $q>p$, 
 \[  \sup_{\mu\in \mathcal{M}}\int_{\R^d} |x|^q d\mu(x)<+\infty \] 
 then $\mathcal{M}$ has uniformly integrable $p$-moments. 
\end{remark} 

 Finally, we introduce a subspace of regular measures as follows. 
 \begin{definition}\label{defreg} We define  $ \cPr$ to be the set of measures belonging to $\cP_2(\R^d)$ and having density in $L^\infty(\R^d)$, with $L^\infty(\R^d)$-norm bounded by $\rho > 0$:
\[
  \cPr = \{m \in \cP_2(\R^d) \ : \ \exists \ 0 \le \tilde m \le \rho \ \text{ a.e. on $\R^d$ s.t. $m = \tilde m dx$}\} .
 \]
  \end{definition} 
  
Since we will work with measures with density in $L^\infty$, we recall here the notion (or characterization) of weak-* convergence in $L^\infty$: for $\mu_k, \mu \in L^\infty(\R^d)$, $\mu_k$ is said to converge to $\mu$ weak-* in $L^\infty$ if  \[\lim_k \int_{\R^d}g(x)\mu_k(dx)= \int_{\R^d} g(x) \mu(dx)\qquad \forall g\in L^1(\R^d).\]

We now make a few considerations on the kinetic part of the energy in \eqref{enintro2}, that is on the functional
\[(m, w) \mapsto\int_{t_1}^{t_2}\int_{\R^d} \frac12\left|\frac{dw}{dt\otimes m(t,dx)}\right|^2 m(t,dx)dt ,\] 
which can be defined in general for couples $(m,w)$, $m \in C(\R, \cP_{1}(\R^d))$ and $w$ a Borel $d$-vector measure  on $\R\times \R^d$, absolutely continuous w.r.t.  $dt \otimes m(t, dx)$. These properties are indeed part of the definition of admissible couples $(m,w)\in \cK$. Throughout the paper, $m(t)$ will further satisfy the $L^\infty$ constaint $m(t) \in \cPr$. We immediately note that if $(m,w)\in \cK^\rho$, then $w$ has a density which is in $L^2_{\rm loc}(\R, L^2(\R^d))$, that is
\[
\int_{t_1}^{t_2}\int_{\R^d} |w|^2dxdt=\int_{t_1}^{t_2} \int_{\R^d} \left| \frac{w(t,x)}{m(t,x)}\right|^2 m^2(t,x) \, dx dt\leq \rho\int_{t_1}^{t_2} \int_{\R^d} \left| \frac{w(t,x)}{m(t,x)}\right|^2 m(t,x) \, dx dt.
\] 
Moreover, by H\"older inequality and recalling that $m(t)\in\cP_2(\R^d)$, we have
\[
\left(\int_{t_1}^{t_2}\int_{\R^d} |x| |w(t,x)|dxdt\right)^2 \leq \left(\int_{t_1}^{t_2}\int_{\R^d} |x|^2 m(t,x)dxdt\right)\left(\int_{t_1}^{t_2}\int_{\R^d}  \left| \frac{w(t,x)}{m(t,x)}\right|^2 m(t,x) \, dx dt\right) < \infty.
\]

We now state a lower semi-continuity result (which could be stated for weaker convergence in the variables $m, w$, but it will be used below in the present form).

\begin{proposition}\label{lsck} Suppose that $m_n \to m$ in $C(\R, \cP_{p}(\R^d))$ for some $p \ge 1$, $w_n \rightharpoonup w$ (weakly) in $L^2((-\infty, \infty) \times \R^d)$, and $m_n(t), m(t)$ are absolutely continuous with respect to the Lebesgue measure for all $t, n$. Then,
\[
\int_{t_1}^{t_2} \int_{\R^d} \left|\frac{w(t,x)}{m(t,x)}\right|^2 m(t,x)dxdt \le \liminf_{n \to \infty} \int_{t_1}^{t_2} \int_{\R^d} \left|\frac{w_n(t,x)}{m_n(t,x)}\right|^2 m_n(t,x)dxdt.
\] 
\end{proposition}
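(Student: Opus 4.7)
The plan is to exploit the well-known convex duality representation of the Benamou--Brenier functional $(m,w)\mapsto |w|^2/(2m)$ as a supremum of linear functionals, and then pass to the limit inside each such linear functional using the given modes of convergence.

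First, I would recall the pointwise Legendre identity
\[
\frac{|w|^2}{2m} = \sup_{b \in \R^d} \Bigl( b \cdot w - \frac{|b|^2}{2} m \Bigr),
\]
valid (in the extended sense $0/0 = 0$, $|w|^2/0 = +\infty$ for $w\ne 0$) for all $m \geq 0$, $w \in \R^d$. Integrating and invoking a standard measurable selection / monotone convergence argument (see e.g.\ Bouchitt\'e--Buttazzo or \cite{ags}), one obtains
\[
\int_{t_1}^{t_2}\!\!\int_{\R^d}\frac{|w(t,x)|^2}{2\,m(t,x)}\,dxdt \;=\; \sup_{b}\int_{t_1}^{t_2}\!\!\int_{\R^d}\Bigl( b(t,x)\cdot w(t,x) - \frac{|b(t,x)|^2}{2} m(t,x)\Bigr)\,dxdt,
\]
where the supremum is taken over $b \in C_c((t_1,t_2)\times \R^d;\R^d)$. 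The same identity holds for each $(m_n,w_n)$.

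Next, I fix such a test field $b$ and pass to the limit term by term. For the linear term in $w$, since $b \in C_c \subset L^2$ and $w_n \rightharpoonup w$ weakly in $L^2$, one has
\[
\int_{t_1}^{t_2}\!\!\int_{\R^d} b \cdot w_n \,dxdt \;\longrightarrow\; \int_{t_1}^{t_2}\!\!\int_{\R^d} b \cdot w \,dxdt.
\]
For the term in $m$, the function $x\mapsto |b(t,x)|^2/2$ is continuous and bounded (uniformly in $t$ since $b$ is compactly supported). Since $m_n \to m$ in $C(\R,\cP_p(\R^d))$ implies narrow convergence of $m_n(t)$ to $m(t)$ for every $t$, Lemma \ref{lemmaconv}(i) (or simply the definition of narrow convergence applied to the continuous bounded function $|b(t,\cdot)|^2/2$) gives
\[
\int_{\R^d}\tfrac12|b(t,x)|^2\,m_n(t,x)\,dx \;\longrightarrow\; \int_{\R^d}\tfrac12|b(t,x)|^2\,m(t,x)\,dx \qquad \forall t.
\]
The uniform bound $|b(t,x)|^2 \leq \|b\|_\infty^2$ and the fact that $m_n(t)$ are probability measures allow me to apply dominated convergence in $t$, yielding
\[
\int_{t_1}^{t_2}\!\!\int_{\R^d}\tfrac12|b|^2 m_n \,dxdt \;\longrightarrow\; \int_{t_1}^{t_2}\!\!\int_{\R^d}\tfrac12|b|^2 m \,dxdt.
\]

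Combining these two convergences and then taking the supremum over $b$, I would conclude via the standard inequality $\sup_b \liminf_n F_n(b) \leq \liminf_n \sup_b F_n(b)$:
\[
\int_{t_1}^{t_2}\!\!\int_{\R^d}\frac{|w|^2}{2m}\,dxdt = \sup_b \lim_n \int \Bigl(b\cdot w_n - \tfrac12|b|^2 m_n\Bigr) \leq \liminf_n \sup_b \int \Bigl(b\cdot w_n - \tfrac12|b|^2 m_n\Bigr) = \liminf_n \int_{t_1}^{t_2}\!\!\int_{\R^d}\frac{|w_n|^2}{2m_n}\,dxdt.
\]
The only mildly delicate point is the justification of the dual formula in the first step (i.e.\ that the supremum may be restricted to continuous compactly supported test fields rather than Borel ones), but this is a classical fact for the integrand $|w|^2/(2m)$, which is convex, positively $1$-homogeneous in $(m,|w|^2)$-direction and lower semicontinuous; once that is granted, everything else is routine.
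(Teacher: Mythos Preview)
Your proof is correct. The paper does not give its own argument but simply cites \cite[Proposition~5.18]{Sbook}; the convex-duality approach you outline (writing the Benamou--Brenier integrand as a supremum of affine functionals in $(m,w)$ and passing to the limit in each) is precisely the standard proof found in that reference, so your proposal and the paper's cited source are essentially the same.
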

 \begin{proof} See    \cite[Proposition 5.18]{Sbook}. \end{proof}
 Finally, we recall the following  uniform continuity property of elements belonging to $\mathcal{K}$, that will be useful in the sequel. 
 \begin{proposition} Let $(m,w)\in \mathcal{K}$, as defined in \eqref{kappanuovo}. Then 
\begin{equation}\label{unmezzoh}
\big(d_2(m(t), m(s))\big)^2 \le (t-s)  \int_{s}^{t} \int_{\R^d}\left|\frac{dw}{dt\otimes m(\tau,dx)}\right|^2 m(\tau,dx)d\tau, \qquad \forall t > s. 
\end{equation} 
 \end{proposition}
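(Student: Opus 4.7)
The plan is to apply the standard Benamou--Brenier characterization of absolutely continuous curves in the Wasserstein space $\cP_2(\R^d)$, which states that whenever a pair $(m,w)$ satisfies the continuity equation $\partial_t m + \operatorname{div} w=0$ in the sense of distributions with $w\ll dt\otimes m(\tau,dx)$ and
\[
\int_{t_1}^{t_2}\int_{\R^d}\left|\frac{dw}{dt\otimes m(\tau,dx)}\right|^2 m(\tau,dx)\,d\tau<\infty
\]
on every compact interval (which is exactly part of the definition of $\cK$), the curve $\tau\mapsto m(\tau)$ is absolutely continuous in $(\cP_2(\R^d),d_2)$ and its metric derivative is controlled by the kinetic energy density. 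Precisely, writing $v(\tau,x):=dw/(dt\otimes m(\tau,dx))$, one has
\[
|m'|(\tau)\le \left(\int_{\R^d}|v(\tau,x)|^2\,m(\tau,dx)\right)^{1/2}\qquad\text{for a.e. }\tau,
\]
see e.g. Theorem 8.3.1 in \cite{ags}.

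First I would invoke the above characterization to obtain, for any $s<t$, the bound
\[
d_2(m(t),m(s))\le \int_s^t |m'|(\tau)\,d\tau,
\]
which is just the usual length estimate for absolutely continuous curves in a metric space. Then I would apply the Cauchy--Schwarz inequality to the integral on the right-hand side, so that
\[
\bigl(d_2(m(t),m(s))\bigr)^2\le (t-s)\int_s^t |m'|^2(\tau)\,d\tau,
\]
and finally use the kinetic bound on $|m'|^2(\tau)$ to conclude
\[
\bigl(d_2(m(t),m(s))\bigr)^2\le (t-s)\int_s^t\int_{\R^d}\left|\frac{dw}{dt\otimes m(\tau,dx)}\right|^2 m(\tau,dx)\,d\tau.
\]

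There is no real obstacle here: the only subtle point is making sure that the assumptions needed to invoke the Benamou--Brenier / AGS characterization are encoded in the definition \eqref{kappanuovo} of $\cK$. Indeed, the continuity equation holds in the distributional sense, the measure $w$ is absolutely continuous with respect to $dt\otimes m(\tau,dx)$, the local finite energy condition is part of the definition, and $m(\tau)\in \cP_2(\R^d)$ for every $\tau$. These are precisely the hypotheses under which the inequality $|m'|(\tau)\le \|v(\tau,\cdot)\|_{L^2(m(\tau))}$ applies, so the argument is self-contained once the reference to \cite{ags} is in place.
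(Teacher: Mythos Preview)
Your proposal is correct and follows exactly the approach indicated in the paper: invoke \cite[Thm 8.3.1]{ags} to bound the metric derivative $|m'|(\tau)$ by $\|v(\tau,\cdot)\|_{L^2(m(\tau))}$, and then apply H\"older (equivalently, Cauchy--Schwarz) to the length estimate $d_2(m(t),m(s))\le \int_s^t |m'|(\tau)\,d\tau$. The only cosmetic slip is the sign in the continuity equation (the paper uses $-\partial_t m + \operatorname{div} w = 0$), but this is immaterial since only $|v|^2$ enters the estimate.
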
 
 \begin{proof}  This can be proved using H\"older inequality and \cite[Thm 8.3.1]{ags}. 
 \end{proof}

\section{Brake periodic solutions} \label{secper}

Let us first define brake orbits. For the sake of clarity, let us summarize first our constraint sets, from larger to smaller:
\[
\begin{split}
& \cK \, : \, \text{the set of admissible flows of mass-momentum $(m,w)$, def. in \eqref{enintro2}} \, \supset \\
& \cK^\rho \, : \, \text{flows such that $m(t)$ is bounded in $L^\infty(\R^d)$, def. in \eqref{kappanuovo}} \, \supset \\
& \cK^\rho_T \, : \, \text{$T$-periodic in time flows, def. in \eqref{kapparho}} \, \supset \\
& \cK^{\rho,S}_T \, : \, \text{$T$-periodic and symmetric flows, def. in \eqref{kapparhos}}.
\end{split}
\]
\begin{definition}\label{bo} Let $ (\bar m, \bar w) \in \mathcal{K}^\rho$. We say that $(\bar m, \bar w)$ is a {\bf brake orbit} for the MFG if $ (\bar m, \bar w) \in \cK^{\rho,S}_T$ and
\begin{multline*}
\int_0^T \int_{\R^d}  \left| \frac{\bar w(t,x)}{\bar m(t,x)}\right|^2 \bar m(t,x) + f(x,\bar m(t)) \bar m(t,x) dx dt \le \\ \int_0^T \int_{\R^d}  \left| \frac{w(t,x)}{m(t,x)}\right|^2 m(t,x) + f(x,\bar m(t)) m(t,x) dx dt \qquad \forall (m,w)\in \mathcal{K}^\rho_T.
\end{multline*}
\end{definition} 
In other words, a brake orbit is a periodic flow enjoying symmetries, which is an Nash equilibrium in a suitable Mean Field sense, i.e. a minimizer of an infinite-dimensional control problem among {\it non-symmetric} competitors.

\smallskip
We now turn to the proof of Theorem \ref{teo2}. 
Denote by $Q = (-\infty, \infty) \times \R^d$ and, for any $T > 0$, $C_T((-\infty, \infty), \cP_2(\R^d))$ be the subset of $T$-periodic elements of $C((-\infty, \infty), \cP_2(\R^d))$. 
We provide a preliminary result on the existence of constant speed geodesics in $\cK^\rho$ based on displacement convexity introduced by McCann \cite{mccann}. 
\begin{lemma}\label{connettori} Let $t_1 < t_2$, and $m_1, m_2 \in \cPr$.
 Then, there exists a couple $(m, w) \in \cK^\rho$, as defined in \eqref{kappas}, such that   
\[
m(t_1) = m_1, \qquad m(t_2) = m_2, \qquad \int_{\tau_1}^{\tau_2} \int_{\R^d} \left| \frac{w(t,x)}{m(t,x)}\right|^2 m(t,x) \, dx dt = \frac{d^2_2(m(\tau_1), m(\tau_2))}{\tau_2 - \tau_1} .
\]
for all $t_1 \le \tau_1 \le \tau_2 \le t_2$.
\end{lemma}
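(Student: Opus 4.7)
The plan is to construct $(m,w)$ on $[t_1,t_2]$ as a constant-speed Wasserstein geodesic between $m_1$ and $m_2$ (displacement interpolation), then extend it stationarily to all of $\R$. The only non-routine point is preservation of the density constraint $m(t)\in\cPr$, for which I would invoke McCann's displacement convexity.

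More precisely, first I would apply Brenier's theorem: since $m_1$ is absolutely continuous w.r.t.\ Lebesgue measure (it lies in $\cPr$), there exists a unique optimal transport map $T=\nabla\varphi$ (with $\varphi$ convex) such that $T_\# m_1 = m_2$ and $d_2^2(m_1,m_2)=\int_{\R^d}|T(x)-x|^2 m_1(dx)$. For $t\in[t_1,t_2]$ set $s(t)=(t-t_1)/(t_2-t_1)$ and
\[
T_t(x) = (1-s(t))x + s(t)T(x), \qquad m(t) = (T_t)_\# m_1,
\]
and define the velocity on the support of $m(t)$ by $v(t,T_t(x)) = \frac{T(x)-x}{t_2-t_1}$, $w(t,\cdot) = v(t,\cdot)\,m(t,\cdot)$. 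Standard computations (see e.g.\ \cite{ags,Sbook}) show that $t\mapsto m(t)$ is a constant-speed geodesic: $\partial_t m + \di w = 0$ in the sense of distributions, $|m'|(t)= d_2(m_1,m_2)/(t_2-t_1)$ a.e., and for every $t_1\le \tau_1\le\tau_2\le t_2$
\[
\int_{\tau_1}^{\tau_2}\!\!\int_{\R^d}\!\left|\tfrac{w}{m}\right|^2 m\,dx\,dt = (\tau_2-\tau_1)\,\tfrac{d_2^2(m_1,m_2)}{(t_2-t_1)^2}=\tfrac{d_2^2(m(\tau_1),m(\tau_2))}{\tau_2-\tau_1}.
\]

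Next I would extend to all of $\R$ by setting $m(t)\equiv m_1$, $w(t)\equiv 0$ for $t\le t_1$ and $m(t)\equiv m_2$, $w(t)\equiv 0$ for $t\ge t_2$. Continuity in $\cP_2$ at the junctions $t_1,t_2$ follows from the geodesic property, and the local finiteness of the kinetic energy required by \eqref{kappanuovo} is clear since the energy is concentrated on $[t_1,t_2]$.

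The main (and only delicate) step is checking that $m(t)\in\cPr$ for every $t\in[t_1,t_2]$. This is where displacement convexity is essential. By McCann's theorem, for any convex $\Phi:[0,\infty)\to[0,\infty)$ with $\Phi(0)=0$ such that $r\mapsto r^d\Phi(r^{-d})$ is convex and non-increasing, the internal energy $m\mapsto\int\Phi(m(x))\,dx$ is convex along displacement interpolations. Applying this with $\Phi(r)=r^p$ yields
\[
\|m(t)\|_{L^p}^p \le \max\bigl(\|m_1\|_{L^p}^p,\|m_2\|_{L^p}^p\bigr)\cdot(\text{interpolation inequality on } [0,1]),
\]
and more precisely a convexity bound of the $L^p$ norm in $s$; letting $p\to\infty$ gives $\|m(t)\|_{L^\infty}\le \max(\|m_1\|_{L^\infty},\|m_2\|_{L^\infty})\le\rho$. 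Hence $m(t)\in\cPr$ for all $t$, so $(m,w)\in\cK^\rho$. (Equivalently, one may argue directly from the fact that $T_t$ has Jacobian determinant bounded below by a concave combination of those of $\mathrm{id}$ and $T$, so that the density of $(T_t)_\#m_1$ is controlled by $\max(\|m_1\|_\infty,\|m_2\|_\infty)$.) The identity on the kinetic energy has already been verified on $[t_1,t_2]$, and since $w\equiv 0$ outside it extends to the whole line. This gives all the properties required by the lemma.
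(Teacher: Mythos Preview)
Your proposal is correct and follows essentially the same route as the paper: build the constant-speed Wasserstein geodesic between $m_1$ and $m_2$, use displacement convexity of the $L^p$ functionals (and let $p\to\infty$) to preserve the $L^\infty$ bound, then extend stationarily off $[t_1,t_2]$. The only cosmetic differences are that you construct the interpolation explicitly via Brenier's map while the paper invokes the abstract geodesic and then \cite[Thm~8.3.1]{ags} for the velocity field, and the paper passes from the $L^q$ bounds to $L^\infty$ via Chebyshev rather than a direct limit; also note the paper's sign convention is $-\partial_t m+\di w=0$, so your $w$ should carry the opposite sign to match \eqref{kappanuovo}.
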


\begin{proof}
Let   $ \hat m$ be  the unique constant speed geodesic $ \hat m \in AC([0, 1], \cP_2(\R^d))$ (see \cite[Section 7.2]{ags}) connecting $m_1$ and $m_2$ (i.e. $\hat m(0) = m_1$, $\hat m(1) = m_2$), which satisfies for all $0 \le s_1 \le s_2 \le 1$
\[
d_2(\hat m(s_1), \hat m(s_2)) = (s_2-s_1)d_2(m_1, m_2),
\]
that is,
\[
|\hat m'|(s) = d_2(m_1, m_2) \qquad \text{for all $s \in (0,1)$.}
\]

The functional $m \mapsto \|m\|_{L^q(\R^d)}$ is {\it geodesically convex} in $\cP_2(\R^d)$ for every $q>1$ (see \cite[Proposition 7.29]{Sbook}), namely  $\hat m(s)$ is in $L^q(\R^d)$ for every  $q> 1$ and every $s$, and it satisfies
\[
\| \hat m(s)\|_{L^q(\R^d)} \le \max\{\|m_1\|_{L^q(\R^d)}, \|m_2\|_{L^q(\R^d)} \} \leq \max\{\|m_1\|_{L^\infty(\R^d)}^{1-1/q}, \|m_2\|_{L^\infty(\R^d)}^{1-1/q} \} \leq \rho^{1-1/q}.
\] So, by Chebycheff inequality, for all $c>0$ and all $q>1$ we have $|\{x\in \R^d: \ \hat m(s)>c\}|^{1/q}\leq  c^{-1}\rho^{1-1/q} $, and therefore
$\hat m(s)\in L^\infty (\R^d)$ with $\| \hat m(s)\|_{L^\infty (\R^d)} \leq \rho$.

Being $\hat m(t)$ a constant speed geodesic connecting $m_1$ and $m_2$ ,
\begin{equation}\label{dm1m2}
d^2_2(\hat m(s_1), \hat m(s_2)) = (s_2-s_1)^2 \big(d_2(m_1, m_2)\big)^2= (s_2-s_1)\int_{s_1}^{s_2} |\hat m'|^2(s) \, ds  \qquad \text{for all $0 \le s_1 \le s_2 \le 1$.}
\end{equation}
 By \cite[Thm 8.3.1]{ags} we get for a.e. $s\in(0,1)$ the existence of a vector field $\hat v(s) \in L^2(\hat m(s); \R^d)$ such that $-\partial_t \hat m + {\rm div}(\hat v\, \hat m) = 0$
is satisfied in the distributional sense, and for a.e. $s$
\[
|\hat m'|(s) = \left(\int_{\R^d} |\hat v(s, x)|^2 \hat m(s,x) dx \right)^{1/2}.
\]
Hence, substituting $|\hat m'|(s)$ into \eqref{dm1m2} and setting $\hat w = \hat v \hat m$ (on the set $\{m > 0\}$, and identically zero elsewhere), we obtain
\[
d^2_2(\hat m(s_1), \hat m(s_2)) = (s_2-s_1)\int_{s_1}^{s_2}  \int_{\R^d} \left| \frac{\hat w(s,x)}{\hat m(s,x)}\right|^2 \hat m(s,x)\, dx ds.
\]
We then have that the couple $(\hat m, \hat w)$ belongs to $\cK$. To obtain the required couple $(m, w)$ it is enough to perform a linear change of variables, i.e.
\[
m(t,x) := \hat m\left(\frac{t-t_1}{t_2-t_1}, x\right),\quad w(t,x) := \frac 1 {t_2-t_1} \hat w\left(\frac{t-t_1}{t_2-t_1}, x\right).
\]
Finally we extend $m(t,x)$ to all $t\in\R$ by setting $m(t,x)=m_1(x)$, $w(t,x)=0$  for $t<t_1$,  and $m(t,x)= m_2(x), w(t,x)=0$ for $t>t_2$.
\end{proof}

Now we need a technical lemma about positivity properties of the functional $\cW$ outside $\mathcal{M}^\pm$. 
\begin{lemma}\label{lemmaw0} For any $q>0$, we have
\begin{equation}\label{delta}
\inf\left\{ \cW(m)\ |\ m\in \cPr,  \ d_2(m, \mathcal{M}^\pm)\geq q\right\}= : \delta(q, \cW) >0.
\end{equation}  
\end{lemma}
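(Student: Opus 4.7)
My plan is a short proof by contradiction that draws directly on the continuity hypothesis \eqref{c}. Assume to the contrary that $\delta(q,\cW)=0$. Then one can find a sequence $\{m_n\}\subset\cPr$ with $d_2(m_n,\mathcal{M}^\pm)\ge q$ for all $n$ and $\cW(m_n)\to 0$. Applying \eqref{c} to this sequence yields $d_2(m_n,\mathcal{M}^\pm)\to 0$, contradicting the standing lower bound $d_2(m_n,\mathcal{M}^\pm)\ge q>0$.

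In this form, the only step is recognizing that \eqref{c} has been formulated precisely to make the argument immediate: it expresses in the strong $d_2$-sense exactly what one would otherwise have to extract from a compactness-plus-lower-semicontinuity procedure. Indeed, without \eqref{c} one could try instead to combine \eqref{b} with Lemma \ref{equiconv} and Remark \ref{remequiconv} to get, up to a subsequence, $m_n\to m$ in $d_p$ for every $p<2$, and then invoke \eqref{lsc} to conclude $\cW(m)=0$, hence $m\in\mathcal{M}^\pm$ by \eqref{z}. The genuine difficulty in that alternative route would be to upgrade $d_p$-convergence to $d_2$-convergence in order to contradict $d_2(m_n,\mathcal{M}^\pm)\ge q$, since the constraint is posed in the strictly stronger quadratic Wasserstein metric. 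This is exactly the obstruction that \eqref{c} removes for free, so nothing further is required.
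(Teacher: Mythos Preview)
Your proof is correct and follows essentially the same approach as the paper: a contradiction argument that produces a sequence $\{m_n\}\subset\cPr$ with $d_2(m_n,\mathcal{M}^\pm)\ge q$ and $\cW(m_n)\to 0$, then applies \eqref{c} to force $d_2(m_n,\mathcal{M}^\pm)\to 0$. Your version is in fact slightly more streamlined; the paper additionally invokes \eqref{b}, Lemma~\ref{equiconv}, \eqref{lsc} and \eqref{z} en route to $\lim_n\cW(m_n)=0$, but since $0\le\cW(m_n)\le 1/n$ this is immediate, and the decisive step in both proofs is the direct appeal to \eqref{c}.
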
 
\begin{proof}
Assume by contradiction that  there exists $q > 0$ for which $\delta=0$. We consider $m_n\in \cPr$ such that $q \le d_2(m_n,  \mathcal{M}^\pm) $ and $ 0\leq\cW(m_n)\leq 1/n$.  By the lower bound in the assumption \eqref{b} and Lemma \ref{equiconv}, we have that the sublevel set $\cW(m)\leq 1$ is compact in $\cP_p(\R^d)$, for any $p<2$, so by \eqref{lsc} and \eqref{z},   we conclude   that \[\lim_n \cW(m_n)=0.\]  Hence, by the continuity property \eqref{c} we get that $\lim_n d_2(m_n, \mathcal{M}^\pm)=0$, which is in contradiction with the fact that $d_2(m_n,  \mathcal{M}^\pm)\geq q$. \end{proof} 

The next lemma \ref{cruciallemma} is crucial, and roughly states the following: suppose that $(m,w)\in \cK^\rho$ is a (bounded energy) competitor, and that $m(t)$ is sufficiently close to  $\mathcal{M}^+$ (resp. $\mathcal{M}^-$)  at some times $t_1,t_2$. Then, if it does not remain close to $\mathcal{M}^+$ (resp. $\mathcal{M}^-$) in the whole time interval $[t_1, t_2]$, it is possible to modify it to decrease its energy. The lemma is based on a cut argument, which has been already used in  the analysis  of  periodic orbits and heteroclinic connections for Hamiltonian systems, see e.g. \cite[Lemma 2.1]{fgn}.

\begin{lemma}\label{cruciallemma}  Let  $0<t_1 < t_2<T$.  Let $(m, w) \in \cK^\rho$   and assume that $(m,w)$ satisfies
\[
\int_{t_1}^{t_2} \int_{\R^d} \left| \frac{w(t,x)}{m(t,x)}\right|^2 m(t,x) \, dx dt  \leq C' .
\]
Let now $q \in (0, q_0]$, where $q_0$ is as in \eqref{z}. Then, there exists $q' = q' (q, C', \cW)$ such that, if the following conditions are fulfilled for some $\bar m^+\in \mathcal{M}^+$ and $t^* \in (t_1, t_2)$ : 
\[
\begin{split}
&\bullet \, d_2(m(t_1), \bar m^+) \le q' , \quad   d_2(m(t_2), \bar m^+) \le q', \\
&\bullet \,   d_2(m(t^*), \mathcal{M}^+)> q,
\end{split}
\]
 then there exists $(\mu, v) \in \cK^\rho$ with the following properties:
\[
\begin{split}
&\bullet \, \big(\mu(t), v(t)\big) = \big(m(t), w(t)\big) \quad \text{for all $t \in \R\setminus (t_1, t_2)$},\\
&\bullet \, d_2(\mu(t), \mathcal{M}^+) < q \quad \text{for all $t \in (t_1, t_2)$},\\
&\bullet \, \int_{t_1}^{t_2} \int_{\R^d} \frac{|v(t,x)|^2}{\mu(t,x)}  \, dx dt  + \int_{t_1}^{t_2} \cW(\mu(t)) \, dt < \int_{t_1}^{t_2} \int_{\R^d} \frac{|w(t,x)|^2}{m(t,x)}  \, dx dt  + \int_{t_1}^{t_2} \cW(m(t)) \, dt.
\end{split}
\]
\end{lemma}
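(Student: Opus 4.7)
The argument is a cut-and-paste construction. On $[t_1, t_2]$, I will replace $(m, w)$ by a flow $(\mu, v)$ consisting of three pieces: a short constant-speed Wasserstein geodesic from $m(t_1)$ to $\bar m^+$ on $[t_1, t_1+\tau]$, a stationary phase $(\mu, v) \equiv (\bar m^+, 0)$ on $[t_1+\tau, t_2-\tau]$, and a geodesic from $\bar m^+$ to $m(t_2)$ on $[t_2-\tau, t_2]$; outside $[t_1, t_2]$ I leave $(m, w)$ untouched. Lemma \ref{connettori} provides the two geodesic legs and guarantees $\mu(t) \in \cPr$, and the constant-speed property yields $d_2(\mu(t), \bar m^+) \le q'$ on the legs, so $d_2(\mu(t), \mathcal{M}^+) \le q' < q$ throughout $(t_1, t_2)$, as required. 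It remains to compare energies.

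First I will upper-bound the energy of $(\mu, v)$ on $[t_1, t_2]$. Each geodesic leg contributes $d_2^2(m(t_i), \bar m^+)/\tau \le (q')^2/\tau$ to the kinetic part and at most $C_1 \tau$ to the potential part, where $C_1$ depends only on $\cW$, $\mathcal{M}^+$, $q_0$ (using \eqref{b} and $d_2$-compactness of $\mathcal{M}^+$ to bound second moments uniformly in the $q_0$-neighborhood of $\mathcal{M}^+$); the stationary middle contributes nothing because $\cW(\bar m^+) = 0$. Totalling and optimizing in $\tau$ yields a bound of order $q'\sqrt{C_1}$. To ensure the optimal $\tau = q'/\sqrt{C_1}$ is feasible ($\tau \le (t_2-t_1)/2$), I first use \eqref{unmezzoh} on $[t_1, t^*]$ and $[t^*, t_2]$, combined with the triangle-inequality estimates $d_2(m(t_1), m(t^*)), d_2(m(t^*), m(t_2)) \ge q - q'$, to get $t_2 - t_1 \ge 2(q-q')^2/C'$.

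For the lower bound on the energy of $(m, w)$ on $[t_1, t_2]$, the goal is to show that the potential part alone exceeds some $E_0 = E_0(q, C', \cW) > 0$. By continuity of $t \mapsto d_2(m(t), \mathcal{M}^+)$, there is a first time $t^{**} \in (t_1, t^*]$ at which $d_2(m(t^{**}), \mathcal{M}^+) = q$. The gap \eqref{z} together with $q \le q_0$ forces $d_2(m(t^{**}), \mathcal{M}^-) \ge 2q_0 - q \ge q$, so by Lemma \ref{lemmaw0}, $\cW(m(t^{**})) \ge \delta(q, \cW)$. Propagating via \eqref{unmezzoh}: on an interval $I$ of length proportional to $q^2/C'$ centered at $t^{**}$ one has $d_2(m(t), m(t^{**})) < q/2$, hence $d_2(m(t), \mathcal{M}^\pm) \ge q/2$, so Lemma \ref{lemmaw0} gives $\cW(m(t)) \ge \delta(q/2, \cW)$ on $I$. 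The lower bound on $t_2 - t_1$ above (for $q' \le q/2$) ensures $I \subset [t_1, t_2]$, yielding $\int_{t_1}^{t_2}\cW(m) \, dt \ge E_0$. Choosing $q'$ small enough so that the upper bound $\sim q'\sqrt{C_1}$ is strictly smaller than $E_0$ concludes the comparison.

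The main obstacle, I expect, is this third step: extracting from the single hypothesis $d_2(m(t^*), \mathcal{M}^+) > q$ a quantitative time interval on which $\cW(m(\cdot))$ is uniformly bounded below. The key trick is to locate a crossing time $t^{**}$ of the $q$-boundary of $\mathcal{M}^+$, which by the gap $d_2(\mathcal{M}^+, \mathcal{M}^-) = 2q_0 \ge 2q$ automatically lies far from $\mathcal{M}^-$ as well; combined with the $1/2$-H\"older control of $m(\cdot)$ coming from \eqref{unmezzoh}, this delivers the required uniform lower bound.
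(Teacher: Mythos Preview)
Your construction of $(\mu, v)$ and the upper bound on its energy coincide with the paper's (the paper simply takes $\tau = q'$ rather than optimizing, but this is immaterial). The genuine difference lies in the lower bound on the energy of $(m,w)$ over $[t_1, t_2]$.

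You bound only the potential term: locating the first crossing time $t^{**}$ of the $q$-level, you use the $\tfrac12$-H\"older control from \eqref{unmezzoh} to produce an interval $I$ of length $\sim q^2/C'$ on which $d_2(m(\cdot), \mathcal{M}^\pm) \ge q/2$, hence $\cW(m(\cdot)) \ge \delta(q/2, \cW)$, yielding $\int_{t_1}^{t_2} \cW(m)\,dt \gtrsim \delta(q/2)\,q^2/C'$. This is correct and elementary; the only care needed (which you address) is checking $I \subset [t_1, t_2]$, and this follows from the same H\"older estimate applied between $t^{**}$ and the endpoints.

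The paper instead combines kinetic and potential via a Young/Maupertuis-type trick: on the subinterval $[\hat\tau_1, \tau_1]$ where $d_2(m(\cdot), \mathcal{M}^+) \in [q/2, q]$, it writes
\[
\int \frac{|w|^2}{m}\,dx\,dt + \int \cW(m)\,dt \;\ge\; \sqrt{2\delta(q/2)} \int |m'|(t)\,dt \;\ge\; \tfrac{q}{2}\sqrt{2\delta(q/2)},
\]
using $\int_{\R^d} |w|^2/m\,dx \ge |m'|^2(t)$ from \cite[Thm~8.3.1]{ags}. This gives a lower bound \emph{independent of $C'$}, at the cost of invoking the metric-derivative characterization. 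Since $q'$ is anyway allowed to depend on $C'$, both routes close the argument; yours is slightly more self-contained, while the paper's Maupertuis estimate is sharper and of independent interest.
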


\begin{proof} For any $0 < q' < q/2$ set
\[
\begin{split}
& \tau_1 = \max\{ t > t_1 : d_2(m(s), \mathcal{M}^+) \le q, \quad\text{for all}\quad s \le t\}, \\
& \tau'_1 = \max\{ t < \tau_1\ : d_2(m(t), \mathcal{M}^+) \le q' \}.
\end{split}
\]
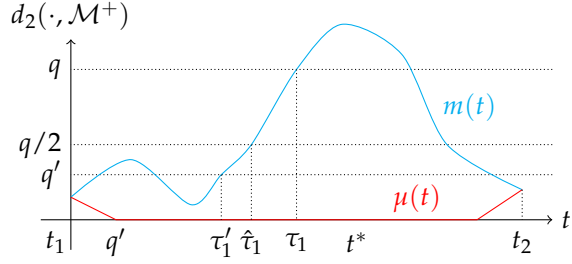
\begin{figure}\label{fig1}
\centering
 \begin{tikzpicture}[scale=2]
       \draw[->] (-.2,0) -- (3.2,0) node[right] {$t$};
      \draw[->] (0,-.2) -- (0,1.2) node[above] {$d_2(\cdot, \mathcal{M}^+)$} ;
      
       \draw[-, densely dotted] (3.2,.3) -- (0,.3) node[left] {$q'$};
       \draw[-, densely dotted] (3.2,.5) -- (0,.5) node[left] {$q/2$};
       \draw[-, densely dotted] (3.2,1) -- (0,1) node[left] {$q$};
       
       \draw [cyan] plot [smooth, tension=.5] coordinates { (0,.15) (0.4, .4) (.8,.1) (1, .3) (1.2, .5) (1.5, 1) (1.8, 1.3) (2.2, 1.1)  (2.5, .5 )(3,.2)};
        \draw [red] plot coordinates { (0,.15) (.3,0) (2.7,0)  (3,.2)};
       
       \draw[-, densely dotted] (1,.3) -- (1,0) ;
       \draw[-, densely dotted] (1.2,.5) -- (1.2,0) ;
       \draw[-, densely dotted] (1.5,1) -- (1.5,0)  ;
        \draw[-, densely dotted] (3,.2) -- (3,0)  ;
       
       \node at (1,-.15) {$\tau'_1$};
       \node at (1.2,-.15) {$\hat \tau_1$};
       \node at (1.5,-.15) {$\tau_1$};
       \node at (1.9,-.15) {$t^*$};
        \node at (.3,-.15) {$q'$};
         \node at (-.1,-.15) {$t_1$};
          \node at (3,-.15) {$t_2$};
        
         \node [red] at (2.3,.14) {$\mu(t)$};
         \node [cyan] at (2.65,.75) {$m(t)$};
         
 \end{tikzpicture}
 \caption{\footnotesize The construction of the alternative competitor $(\mu, v)$. }
 \end{figure}
It holds $t_1 \leq \tau'_1 < \tau_1 < t^*<t_2$, and $q' \le d_2(m(t),  \mathcal{M}^+) \le q$ for all $t \in [\tau'_1, \tau_1]$. Note that, by \eqref{unmezzoh} and the triangle inequality,
\begin{multline*}
(C'(\tau_1 - t_1))^{1/2} \ge d_2(m(\tau_1), m(t_1)) \ge d_2(m(\tau_1), \bar m^+) - d_2(m(t_1), \bar m^+)\\ \ge d_2(m(\tau_1),  \mathcal{M}^+)-d_2(m(t_1), \bar m^+) \ge q - q'>\frac{q}{2},
\end{multline*}
hence
\[
t_2 >\tau_1> t_1 +\frac{q^2}{4C'}. 
\]

We construct $(\mu, v)$ as follows. Choose $0 < q' < \frac{1}{2}\min\left\{  \frac{q^2}{4C'},q\right\}$. By means of Lemma \ref{connettori}, there are two couples $(m_1, w_1)$ and $(m_2, w_2)$ belonging to $\cK^\rho$ which connect  $m(t_1)$ to
$\bar m^+$ at time $t_1 + q'$ and $\bar m^+$ at time $t_2 - q'$ to $m(t_2)$, respectively. Set then
\[
\mu(t) := 
\begin{cases}
m_1(t) & t \in [t_1, t_1 +  q'], \\
\bar m^+ & t \in [t_1 +  q', t_2 - q'], \\
m_2(t) & t \in [t_2 - q', t_2], \\
m(t) & \text {otherwise}
\end{cases}
\qquad
v(t) := 
\begin{cases}
w_1(t) & t \in [t_1, t_1 +  q'], \\
0 & t \in [t_1 +  q', t_2 - q'], \\
w_2(t) & t \in [t_2 - q', t_2], \\
w(t) & \text {otherwise}
\end{cases}
\]
The constraint $(\mu, v) \in \cK^\rho$ is easily verified. 
Note   that  for $t \in [t_1, t_1 +  q']$, since $\mu(t)$ is a constant speed geodesic connecting $\mu(t_1) = m(t_1)$ and $\mu(t_1 + q') = \bar m^+$,
\[
d_2(\mu(t), \mathcal{M}^+)\leq d_2(\mu(t), \bar m^+) \le d_2(\mu(t_1), \bar m^+) = d_2(m(t_1), \bar m^+)\leq  q'.
\]
The same inequalities holds on $t \in [t_2-q', t_2]$, hence \begin{equation}\label{unost} d_2(\mu(t), \mathcal{M}^+) \leq q'\leq q\qquad\text{ for all $t \in (t_1, t_2)$.}\end{equation}
Since $\mathcal{M}^+$ is compact,
\[
\int_{\R^d} |x|^2 \mu(t,x) dx = d_2(\mu(t), \delta_0) \le d_2(\mu(t), \mathcal{M}^+) + d_2(\mathcal{M}^+, \delta_0) \le \hat c + q,
\]
for some $\hat c > 0$.
 
Therefore, by Lemma \ref{connettori} and  the growth assumption on $\cW$ given by \eqref{b},  we get 
 \begin{align}\label{eqzz}
  \int_{t_1}^{t_2} & \int_{\R^d} \frac{|v|^2}{\mu}  \, dx dt  + \int_{t_1}^{t_2} \cW(\mu(t)) \, dt =\\ 
\nonumber & = \int_{t_1}^{t_1 + q'} \int_{\R^d} \frac{|v_1|^2}{m_1}  \, dx dt  + \int_{t_1}^{t_1 + q'} \cW(m_1(t)) \, dt + \int_{t_2 - q'}^{t_2} \int_{\R^d} \frac{|v_2|^2}{m_2}  \, dx dt  + \int_{t_2 - q'}^{t_2} \cW(m_2(t)) \, dt\\ 
\nonumber &=  \frac{d_2^2(m(t_1), \bar m^+)}{q'} + \int_{t_1}^{t_1 + q'} \cW(m_1(t)) \, dt +\frac{d_2^2(m(t_2), \bar m^+)}{q'}+ \int_{t_2 - q'}^{t_2} \cW(m_2(t)) \, dt  \\  
\nonumber & \qquad\qquad \leq  2q' + 2q' C_\cW (1+\hat c + q).
\end{align} 
 
We now introduce a further intermediate time $\hat \tau_1 := \max\{ t < \tau_1\ : d_2(m(t), \mathcal{M}^+) \le q/2 \}$. It holds $\tau'_1 <\hat \tau_1 < \tau_1$, and $q/2 \le d_2(m(t),\mathcal{M}^+) \le q$ for all $t \in [\hat \tau_1, \tau_1]$.

 By the triangular inequality and the compactness of $\mathcal{M}^\pm$, recalling the definition of $q_0$, we get $d_2(m(t), \mathcal{M}^-)\geq 2q_0-d_2(m(t), \mathcal{M}^+)\geq 2q_0-q\geq q$ for all  $t \in [\hat \tau_1, \tau_1]$. 
Therefore by Lemma \ref{lemmaw0}, we get that there exists $\delta=\delta (q/2, \cW)$ such that  $\cW(m(t))\geq \delta>0$ for all $t \in [\hat \tau_1, \tau_1]$. 
Recall that $(m, w) \in \cK$, so \cite[Theorem 8.3.1]{ags} guarantees that $ \int_{\R^d} \frac{|w(t)|^2}{m(t)} dx \ge \big(|m'|(t)\big)^2$ for a.e. $t$. Hence, by Young's inequality and the triangle inequality,
\begin{multline*}
\int_{t_1}^{t_2} \int_{\R^d} \frac{|w|^2}{m}  \, dx dt  + \int_{t_1}^{t_2} \cW(m(t)) \, dt \ge \int_{\hat \tau_1}^{\tau_1} \int_{\R^d} \frac{|w|^2}{m}  \, dx dt  + \int_{\hat \tau_1}^{\tau_1} \cW(m(t)) dt  \\
\ge \sqrt{2} \int_{\hat \tau_1}^{\tau_1} \left(\int_{\R^d} \frac{|w|^2}{m}  \, dx \right)^{1/2} \sqrt{\cW(m(t))}\, dt \ge \sqrt{2 \delta} \int_{\hat \tau_1}^{\tau_1} |m'|(t) \, dt = \sqrt{2 \delta} \, d_2(m(\hat \tau_1), m(\tau_1))
\\ \ge  \sqrt{2 \delta} \big( d_2(m(\tau_1), \mathcal{M}^+) - d_2(m(\hat \tau_1), \mathcal{M}^+) \big) =  \frac q 2 \sqrt{2 \delta}.
\end{multline*}
Combining this inequality with $\eqref{eqzz}$ we complete the proof of the lemma, decreasing eventually $q'$ so that $2q' + 2q' C_\cW (1+\hat c + q)<  \frac q 2 \sqrt{2 \delta}$.
\end{proof} 

We are now ready to construct $T$-periodic minimizers of $J_T$. We restrict the class $\cK$ to flows of probability measures that  are $T$-periodic and enjoy additional symmetries, so we introduce the set $\cK_T^{\rho, S}$ as defined in \eqref{kapparhos}. 
We observe that the second symmetry constraint $m(-t)=\gamma m(t)$ rules out orbits which remain for all time in $\mathcal{M}^+$ or in $\mathcal{M}^-$. The first symmetry constraint $m(T/4+t)=m(T/4-t)$ is due to the fact that we are looking for brake periodic orbits, which oscillate twice in a period between $\mathcal{M}^+$ and $\mathcal{M}^-$. Note that we are using the notation $\gamma m(t) = \gamma_\# m(t)$; since $m(t)$ has a density, this means that $(\gamma m)(t,x) = m(-t,\gamma(x))$ a.e..

We provide now the proof of the first main result, that is Theorem \ref{teo2}.  

\begin{proof}[Proof of Theorem \ref{teo2}]
{\it Step 1: Energy bounds.} Choose any $m_0 \in \cPr$ with compact support such that $m_0 = \gamma m_0$. 
Observe that by \eqref{rif} we have $ \gamma \mathcal{M}^+ = \mathcal{M}^-$. Since $d_2$ is preserved by the transformation $\gamma$, we can define  \[h : = d_2(m_0, \mathcal{M}^+)=d_2(m_0,  \mathcal{M}^-).\]  Let $\bar m_+\in \mathcal{M}^+$, such that 
$d_2(m_0, \mathcal{M}^+)=d_2(m_0, \bar m_+)$. So $d_2(m_0, \mathcal{M}^-)=d_2(m_0, \gamma \bar m_+)$. By Lemma \ref{connettori}, there exists a couple $(m, w) \in \cK^\rho$  that connects $m_0$ at time  $t = 0$ to
$\bar m_+$ at time $t = 1$.   Let $T > 4$, and for $t \in [0, T/2]$,
\[
\tilde m(t) := 
\begin{cases}
m(t) & t \in [0, 1], \\
\bar m_+ & t \in [1, T/2 - 1], \\
m(T/2-t) & t \in [T/2-1, T/2]
\end{cases}
\qquad
\tilde w(t,x) := 
\begin{cases}
w(t,x) & t \in [0, 1], \\
0 & t \in [1, T/2 - 1], \\
-w(T/2-t,x) & t \in [T/2-1, T/2].
\end{cases}
\] Observe that $d_2(\tilde m(t), \mathcal{M}^+)\leq d_2(\tilde m(t), \bar m_+)\leq h$ for all $t\in [0, T/2]$. 
On the interval $[-T/2, 0]$, $(\tilde m, \tilde w)$ can be extended symmetrically: \[(\tilde m(t), \tilde w(t)):=(\gamma \tilde m(-t), -\gamma \tilde w(-t)),\]
Finally, $(\tilde m, \tilde w)$ can be extended periodically over the whole time interval, so $(\tilde m, \tilde w) \in \cK_T^{\rho, S}$. 

Moreover we compute, recalling Lemma \ref{connettori}, and the growth condition \eqref{b} on $\cW$,
\begin{multline}\label{defc}
0\leq J_T(\tilde m, \tilde w) = 4 \int_0^1 \int_{\R^d} \left| \frac{\tilde w(t,x)}{\tilde m(t,x)}\right|^2 \tilde m(t,x) \, dx dt  +4 \int_0^1 \cW(\tilde m(t)) \, dt 
\\ \le 4d^2+ 4C_\cW(1 + d^2_2(\tilde m(t), \delta_0)) \le 4d^2+ 4C_\cW\big(1 + (h + d_2(\mathcal{M}^+, \delta_0))^2\big) =: C'.
\end{multline}Note that $C' > 0$ does not depend on $T$. We may then suppose that along any minimizing sequence $(m_n,w_n)$,
\begin{equation}\label{enbound}
\int_0^T \int_{\R^d} \left| \frac{w_n(t,x)}{m_n(t,x)}\right|^2 m_n(t,x) \, dx dt \le J_T(m_n, w_n) \le C'.
\end{equation}

{\it Step 2: Minimizing sequences can be chosen to be close to  $\mathcal{M}^\pm$.} Pick any minimizing sequence $(m_n, w_n) \in \cK_T^{\rho,S}$ of $J_T$. 
 Fix now $n \in \mathbb N$. Let $ q \in (0, q_0]$, and $0 < q' < q$ be as in Lemma \ref{cruciallemma} (with $C'$ as in \eqref{enbound}). 

Note that the triangle inequality, the invariance of $d_2$ under $\gamma$, $m_n(0)=\gamma m_n(0)$, $\mathcal{M}^+ = \gamma \mathcal{M}^-$ imply  that $d_2(\mathcal{M}^+ ,  m_n(0))=d_2(\mathcal{M}^- ,  m_n(0))$ and then \[
2q' < 2q_0 = d_2(\mathcal{M}^+ , \mathcal{M}^-) \le 2d_2(\mathcal{M}^+ ,  m_n(0))=2d_2(\mathcal{M}^- ,  m_n(0)).
\] 
Let $\delta(q') = \inf_{ m\in\cPr,   d_2(m, \mathcal{M}^\pm)\ge q'} \cW_0(m)>0$, as in Lemma \ref{lemmaw0}.  Let  $0 \le s \le T$. Note that if  
$ d_2(m(t), \mathcal{M}^\pm)\ge q'$ for all $t \in [0,s]$,  then this implies 
\[
s \delta(q') \le \int_0^s \cW(m_n(t)) \, dt \le J_T(m_n, w_n) \le C'.
\]
Hence, for $T > \bar s := C'\big(\delta(q')\big)^{-1}$, by continuity of $m_n(t)$, since $ d_2(m_n(0), \mathcal{M}^\pm)>q'$,   there exists $s \in (0, \bar s)$   such that
\[d_2(m_n(t), \mathcal{M}^\pm)>q'
\quad  \text{for all $t \in [0, s)$  and }\quad d_2(m_n(s), \mathcal{M}^\pm)= q'.
\] Let $\bar m\in \mathcal{M}^+\cup \mathcal{M}^-$ such that  $d_2(m_n(s), \bar m)=q'$. We may assume without loss of generality that $\bar m\in \mathcal{M}^+$ (the proof is completely analogous if $\bar m\in \mathcal{M}^-$).  

So $d_2(m_n(s), \mathcal{M}^+)=d_2(m_n(s), \bar m)=q'$. 
Note that by symmetry of $m_n(t)$ we also have $d_2(m_n(T/2-s), \mathcal{M}^+)=d_2(m_n(T/2- s), \bar m) = q'$.
 Hence,  if $d_2(m_n(t), \mathcal{M}^+) > q$ for some $t \in (s, T/2- s)$,  by Lemma \ref{cruciallemma} it is possible to modify $(m_n, w_n)$ in $(s, T/2-s)$ to construct   a  competitor $(\mu_n, v_n)$  with  $J_T(\mu_n, v_n)<J_T(m_n, w_n)$.   Therefore, we can further restrict the minimization process to competitors $(m, w) \in \cK_T$ that   satisfy for some $s$ 
\begin{equation}\label{d2mpm}
\begin{cases}
d_2(m(t),  \mathcal{M}^+) < q & \forall t \in \left(s, \frac T2- s\right) \\
d_2(m(t),  \mathcal{M}^-) < q & \forall t \in \left(s-\frac T2, - s\right).
\end{cases}
\end{equation}
Note that $0<s\leq  \bar s = C'\big(\delta(q')\big)^{-1}$ and that $T> C'\big(\delta(q')\big)^{-1}\to +\infty$ as $q\to 0$. 

{\it Step 3: Existence of a minimizer.} By the growth condition \eqref{b}, we get that  there exists $t_n\in[0, T]$ such that 
$m_n(t_n)$ are  uniformly bounded in $\cP_2(\R^d)$ with respect to $n$.   Moreover by    \eqref{unmezzoh}
\begin{equation}\label{s1}
d^2_2(m_n(t), m_n(s)) \le C' |t-s|
\end{equation}
for all $t,s \in [0, T]$.   This implies that $(m_n)$ is uniformly continuous as a sequence of $\cP_2(\R^d)$-valued periodic functions, and
\begin{equation}\label{s2}
\sup_{n} \sup_{t \in [0,T]} \int_{\R^d} |x|^2 m_n(x,t) dx < \infty.
\end{equation} 
Therefore, by Ascoli-Arzel\`a theorem  and Lemma  \ref{equiconv}, $(m_n)$ has a  subsequence (still denoted by $(m_n)$) which converges in $C(\R, \cP_{p}(\R^d))$ for all $p<2$ 
 to some $m^T\in C_T(\R, \cP_p(\R^d))$.  Due to the lower semicontinuity  \eqref{lsc},  and the growth assumption \eqref{b} of $\cW$, we get that $m\in C_T(\R, \cP_2(\R^d))$.
Note that by convergence in $C(\R, \cP_{p}(\R^d))$ symmetry properties pass to the limit. Moreover also  \eqref{d2mpm} passes to the limit, due to lower semicontinuity of $d_2$ with respect to narrow convergence, see  Lemma \ref{equiconv}.

Finally, $(m_n)$ is bounded in $L^\infty(Q)$, so we can extract a further subsequence that converges $L^\infty(Q)$-weak-$^*$ to $m^T$, and $0 \le m^T(t,x) \le \rho$ a.e.  \\
Regarding $(w_n)$, we have
\begin{equation}\label{stimawn}
\int_0^T \int_{\R^d} |w_n(t,x)|^2 \, dx dt \le \rho \int_0^T \int_{\R^d} \frac{|w_n(t,x)|^2}{m_n(t,x)} \, dx dt,
\end{equation} 
hence $w_n$ converges weakly (up to a subsequence) in $L^2(Q)$ to some $w^T$.
\\It is easy to check that $-\partial_t m^T + {\rm div}(w^T) = 0$ in the distributional sense.\\
So we are just left to check that $(m^T, w^T)$ minimizes $J_T$. We use the lower semicontinuity of the kinetic part of the energy recalled in Proposition \ref{lsck},  and  for the potential part, we use the lower semicontinuity \eqref{lsc} of $\cW$ and Fatou lemma. 
\end{proof}

We now observe that $(m^T, w^T)\in\cK_T^{\rho, S}$ obtained in Theorem \ref{teo2} is a global minimizer of the control problem in Definition \ref{bo}. Note that, following  \cite{cas16}, this property can be used as a starting point to derive first order optimality conditions, that are of the form \eqref{mfg}. We mention that additional ``pressure'' terms and an ergodic constant may appear in the Hamilton-Jacobi equation, due to density constraints and $T$-periodicity. In any case, the key observation here is that no further multipliers related to $m(T/4 + t) = m(T/4  -t)$, $m(-t) = \gamma_\# m(t)$ are going to appear in the optimality conditions (by the symmetry assumptions on $\cW$).

\begin{corollary}\label{remsimm} $(m^T, w^T)\in\cK_T^{\rho, S}$ obtained in Theorem \ref{teo2} is a brake orbit in the sense of Definition \ref{bo}.
\end{corollary}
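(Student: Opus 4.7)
The plan is to introduce the ``linearized'' (potential-frozen) functional
\[
G(m,w) := \int_0^T\!\!\int_{\R^d} \frac{|w(t,x)|^2}{m(t,x)}\,dx\,dt + \int_0^T\!\!\int_{\R^d} f(x,m^T(t))\,m(t,dx)\,dt
\]
on $\cK_T^\rho$, and to prove that $(m^T,w^T)$ is a global minimizer of $G$ on the full (non-symmetric) set $\cK_T^\rho$, which is precisely the content of Definition~\ref{bo}. Two structural facts drive the argument: (a) $G$ is jointly convex in $(m,w)$, since $(m,w)\mapsto|w|^2/m$ is convex and the linear term in $m$ is frozen at $m^T$; and (b) the symmetries of $m^T\in\cK_T^{\rho,S}$ transfer to the weight $f(\cdot,m^T(\cdot))$, because differentiating the assumption $\cW(\gamma_\# m)=\cW(m)$ along test variations yields the identity $f(\gamma x, m) = f(x,\gamma_\# m)$ (up to an additive constant in $m$, which is irrelevant in $G$ since $m(t)$ and $m^T(t)$ are probabilities).

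The first step is to prove the restricted inequality $G(\tilde m,\tilde w)\ge G(m^T,w^T)$ for every $(\tilde m,\tilde w)\in\cK_T^{\rho,S}$. Since $\cK_T^{\rho,S}$ is convex, the interpolation $(m_\eps,w_\eps):=(1-\eps)(m^T,w^T)+\eps(\tilde m,\tilde w)$ still lies in $\cK_T^{\rho,S}$; minimality of $(m^T,w^T)$ from Theorem~\ref{teo2} then gives $J_T(m_\eps,w_\eps)\ge J_T(m^T,w^T)$, and convexity of $(m,w)\mapsto|w|^2/m$ bounds the kinetic part of $J_T(m_\eps,w_\eps)$ from above by $(1-\eps)\int|w^T|^2/m^T+\eps\int|\tilde w|^2/\tilde m$. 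Subtracting, dividing by $\eps$ and sending $\eps\to 0^+$, the defining property of $f=\delta_m\cW$ identifies the limit of the $\cW$-increments with $\int_0^T\!\int f(x,m^T)(\tilde m-m^T)\,dx\,dt$, and rearrangement yields exactly $G(\tilde m,\tilde w)\ge G(m^T,w^T)$.

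To remove the symmetry constraint I would use averaging. Define $\sigma_1(m,w)(t,x):=(m(-t,\gamma x),-\gamma\, w(-t,\gamma x))$ and $\sigma_2(m,w)(t,x):=(m(T/2-t,x),-w(T/2-t,x))$; a short direct calculation (using $\gamma^2=\operatorname{Id}$ and orthogonality of $\gamma$) shows each $\sigma_i$ maps $\cK_T^\rho$ into itself and preserves the continuity equation. For any $(m,w)\in\cK_T^\rho$, the average
\[
(\tilde m,\tilde w):=\tfrac14\sum_{\sigma\in\{\operatorname{Id},\sigma_1,\sigma_2,\sigma_1\sigma_2\}}\sigma(m,w)
\]
lies in $\cK_T^{\rho,S}$ (fixed point of the projection onto the invariant subspace under $\Z_2\times\Z_2$). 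A change-of-variables argument using $|\gamma w|=|w|$, $T$-periodicity, the symmetries of $m^T$, and the identity $f(\gamma x,m)=f(x,\gamma_\# m)$ shows that $G$ is invariant under each $\sigma_i$, whence convexity of $G$ gives $G(\tilde m,\tilde w)\le G(m,w)$. Chaining with the previous step produces $G(m^T,w^T)\le G(\tilde m,\tilde w)\le G(m,w)$ for every $(m,w)\in\cK_T^\rho$, which is the inequality of Definition~\ref{bo}.

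The main obstacle I anticipate is the justification, in Step~1, of the interchange of limit and integration in
\[
\lim_{\eps\to 0^+}\int_0^T\frac{\cW(m_\eps(t))-\cW(m^T(t))}{\eps}\,dt=\int_0^T\!\!\int_{\R^d} f(x,m^T(t))(\tilde m(t)-m^T(t))\,dx\,dt.
\]
Pointwise convergence of the integrand is the definition of $f=\delta_m\cW$, but an integrable majorant for the difference quotient is needed. I expect this to follow from the uniform $L^\infty$ bound $\rho$ on all measures involved, the uniform second-moment bound from the construction in Theorem~\ref{teo2}, and the joint continuity of $f$ on $\R^d\times\cP(\R^d)$, which together provide uniform control on the difference quotients along the straight segments $[m^T(t),\tilde m(t)]\subset\cPr$ via the integral representation $\cW(m_\eps(t))-\cW(m^T(t))=\int_0^\eps\!\int f(x,m_s(t))(\tilde m(t)-m^T(t))\,dx\,ds$.
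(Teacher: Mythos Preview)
Your proposal is correct and follows essentially the same route as the paper: first derive the linearized inequality on $\cK_T^{\rho,S}$ from minimality of $J_T$ via convexity of $(m,w)\mapsto|w|^2/m$ and $f=\delta_m\cW$ (the paper outsources this to \cite{BC16}, while you spell out the interpolation argument), and then pass to $\cK_T^\rho$ by symmetrizing competitors and using convexity of $G$ together with the identity $f(\gamma x,m)=f(x,\gamma_\# m)$. The only cosmetic difference is that you average over the full $\Z_2\times\Z_2$ group at once, whereas the paper removes the two symmetry constraints one at a time and only details the $\gamma$-reflection.
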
 
\begin{proof} Having denoted by $f$ the derivative $f(x,m)= \frac{\delta}{\delta m} \cW (m) \in C(\R^d \times \cP(\R^d))$, given any minimizer $(\bar m, \bar w)$ of $J_T$ in $\cK_T^{\rho, S}$, it is possible to show by convexity of $(m,w)\mapsto  \frac{|w|^2}m$ and arguing as in \cite{BC16} that for all $(m, w) \in \cK_T^{\rho, S}$,
\begin{multline}\label{nasheq}
\int_0^T \int_{\R^d}  \left| \frac{\bar w(t,x)}{\bar m(t,x)}\right|^2 \bar m(t,x) + f(x,\bar m(t)) \bar m(t,x) dx dt \le \\ \int_0^T \int_{\R^d}  \left| \frac{w(t,x)}{m(t,x)}\right|^2 m(t,x) + f(x,\bar m(t)) m(t,x) dx dt.
\end{multline}
Hence, we just need to show that the minimization property \eqref{nasheq} can be extended to the more general class of non-symmetric competitors $(m, w) \in \cK_T^{\rho} \supset \cK_T^{\rho,S}$. 


We detail only the fact that the symmetry condition $m(-t)=\gamma_\# m(t)$ can be dropped (arguing analogously, it can be shown the symmetry constraint around $T/4$ can be also dropped). Indeed, for $(m, w) \in \cK_T^{\rho}$ satisfying $m(T/4 + t) = m(T/4  -t)$ only, let
\[
 \tilde m (t) = \frac12 m(t) + \frac12 \gamma m(-t), \qquad \tilde w (t) = \frac12 w(t) + \frac12 \gamma w(-t).
\]
Note that $\cW(m) = \cW(\gamma_\# m)$ yields $f(x,m) = f(\gamma(x), \gamma m)$ (recall that $f = \delta_m \cW$), and therefore, since $\bar m(t,x) = \bar m(-t, \gamma(x))$  via a change of variables and convexity,
\begin{multline*}
\int_0^T \int_{\R^d}  \frac{|w(t,x)|^2}{m(t,x)} + f(x,\bar m(t)) m(t,x) dx dt =
\frac 12 \int_0^T \int_{\R^d}  \frac{|w(t,x)|^2}{m(t,x)} + f(x,\bar m(t)) m(t,x) dx dt + \\ + \frac 12 \int_0^T \int_{\R^d}  \frac{|w(-t,\gamma(x))|^2}{m(-t,\gamma(x))} + f(\gamma(x),\bar m(-t)) m(-t,\gamma(x)) dx dt \\ \ge
 \int_0^T \int_{\R^d}  \frac{|\tilde w(t,x)|^2}{\tilde m(t,x)} + f(x,\bar m(t)) \tilde m(t,x) dx dt.
\end{multline*}
Then, since we have that $(\tilde m , \tilde w) \in \cK_T^{\rho, S}$, 
\[
\int_0^T \int_{\R^d}  \frac{|w(t,x)|^2}{m(t,x)} + f(x,\bar m(t)) m(t,x) dx dt \ge \int_0^T \int_{\R^d}  \frac{|\bar w(t,x)|^2}{\bar m(t,x)} + f(x,\bar m(t)) \bar m(t,x) dx dt.
\]

\end{proof}

\section{Heteroclinic connections} \label{sechet} 

In this section we provide the proof of the second main result, that is Theorem \ref{teo3}. 

We introduce our definition of  heteroclinic connection. First of all we recall the definition of the energy  on  the whole space:
\begin{equation}\label{energy}
J(m,w)=\int_{-\infty}^{+\infty} \int_{\R^d} \frac12\left|\frac{dw}{dt\otimes m(t,dx)}\right|^2 m(t,dx)dt +\int_{-\infty}^{+\infty}\cW(m(t)) \, dt. 
\end{equation}
 Recall that $f(x,m)= \frac{\delta}{\delta m} \cW (m)$, and couples $(m,w)\in \mathcal{K}^\rho$ are admissible flows that are absolutely continuous with respect to the Lebsegue measure $dt \otimes dx$.
\begin{definition}\label{hetero} Let $ (\bar m, \bar w) \in \mathcal{K}^\rho$. We say that $(\bar m, \bar w)$ is a {\bf heteroclinic connection} for the MFG if 
 $\lim_{t\to -\infty} d_2(m(t), \mathcal{M}^-)=0=\lim_{t\to +\infty} d_2(m(t), \mathcal{M}^+)$, and $(\bar m, \bar w)$ satisfies
\begin{multline}\label{nequi}
\int_{-\infty}^{+\infty} \int_{\R^d}  \left| \frac{\bar w(t,x)}{\bar m(t,x)}\right|^2 \bar m(t,x) + f(x,\bar m(t)) \bar m(t,x) dx dt \le \\ \int_{-\infty}^{+\infty} \int_{\R^d}  \left| \frac{w(t,x)}{m(t,x)}\right|^2 m(t,x) + f(x,\bar m(t)) m(t,x) dx dt \qquad \forall (m,w)\in \mathcal{K}^\rho.
\end{multline}
\end{definition}

We start  observing that if $(m,w)$ has bounded energy, then $m$ should approach at $\pm \infty$ the stationary sets $\mathcal{M}^\pm$. 
\begin{lemma}\label{lemmat} 
Let $(m,w)\in \cK^\rho$,  and suppose $J(m,w)<+\infty$. Then \[
\text{ either } \lim_{t\to+ \infty}d_2\left( m(t), \mathcal{M}^+\right)=0 \quad \text{or } \quad  \lim_{t\to+ \infty}d_2\left( m(t), \mathcal{M}^-\right)=0\] and analogously for $t\to -\infty$. In particular if $(m,w)\in \cK^{\rho, S}$ then 
either \[
 \lim_{t\to+ \infty}d_2\left( m(t), \mathcal{M}^+\right)=0 \quad \text{and } \quad  \lim_{t\to- \infty}d_2\left( m(t), \mathcal{M}^-\right)=0\]
 or 
\[
 \lim_{t\to+ \infty}d_2\left( m(t), \mathcal{M}^-\right)=0 \quad \text{and } \quad  \lim_{t\to- \infty}d_2\left( m(t), \mathcal{M}^+\right)=0.\]
 \end{lemma}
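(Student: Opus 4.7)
The plan is to exploit two ingredients: the integrability $\int_{-\infty}^{+\infty}\cW(m(t))\,dt<\infty$ (which forces $m(t)$ to spend most of its time near $\mathcal{M}^+\cup\mathcal{M}^-$), and a Cauchy-Schwarz / Young trick as in Lemma \ref{cruciallemma} (which makes each transition between a neighborhood of $\mathcal{M}^+$ and the complement of such a neighborhood cost a fixed positive amount of energy).

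First I would show that
\[
\liminf_{t\to+\infty} d_2(m(t),\mathcal{M}^+)=0 \quad\text{or}\quad \liminf_{t\to+\infty} d_2(m(t),\mathcal{M}^-)=0.
\]
If both $\liminf$'s were positive, there would exist $q>0$ and $t_0$ with $d_2(m(t),\mathcal{M}^\pm)\ge q$ for all $t\ge t_0$; then by Lemma \ref{lemmaw0}, $\cW(m(t))\ge\delta(q,\cW)>0$ for $t\ge t_0$, contradicting $\int\cW(m(t))\,dt<\infty$. Say without loss of generality that $\liminf_{t\to+\infty} d_2(m(t),\mathcal{M}^+)=0$.

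Next, I would upgrade the $\liminf$ to a full limit. Assume by contradiction that $\limsup_{t\to+\infty} d_2(m(t),\mathcal{M}^+)>0$. Fix $q\in(0,q_0)$ with $q$ smaller than this $\limsup$; then there exist two sequences $r_n\to+\infty$, $s_n\to+\infty$ (which can be interlaced) with $d_2(m(r_n),\mathcal{M}^+)\le q/4$ and $d_2(m(s_n),\mathcal{M}^+)\ge q$. By continuity of $t\mapsto m(t)$ with respect to $d_2$, one can select disjoint intervals $[\tau_1^n,\tau_2^n]$ on which $d_2(m(t),\mathcal{M}^+)$ increases from $q/2$ to $q$, so that $q/2\le d_2(m(t),\mathcal{M}^+)\le q$ throughout. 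Since $q<q_0$, the triangle inequality gives $d_2(m(t),\mathcal{M}^-)\ge 2q_0-q\ge q$ on these intervals, so $d_2(m(t),\mathcal{M}^\pm)\ge q/2$ and hence $\cW(m(t))\ge \delta(q/2,\cW)=:\delta>0$ by Lemma \ref{lemmaw0}. Using $\int_{\R^d}|w|^2/m\,dx\ge (|m'|(t))^2$ from \cite[Thm 8.3.1]{ags} together with Young's inequality $a^2+b^2\ge 2ab$, I get
\[
\int_{\tau_1^n}^{\tau_2^n}\!\!\int_{\R^d}\tfrac12\tfrac{|w|^2}{m}\,dx\,dt+\int_{\tau_1^n}^{\tau_2^n}\!\cW(m(t))\,dt\ge \sqrt{\delta}\int_{\tau_1^n}^{\tau_2^n}|m'|(t)\,dt\ge \sqrt{\delta}\,d_2(m(\tau_1^n),m(\tau_2^n))\ge \tfrac{q}{2}\sqrt{\delta}.
\]
Summing over infinitely many disjoint intervals $[\tau_1^n,\tau_2^n]$ yields $J(m,w)=+\infty$, a contradiction. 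Hence $\lim_{t\to+\infty} d_2(m(t),\mathcal{M}^+)=0$. The same argument applies as $t\to-\infty$ and with $\mathcal{M}^-$ in place of $\mathcal{M}^+$.

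Finally, in the symmetric case $(m,w)\in\cK^{\rho,S}$, the relation $m(-t)=\gamma_\# m(t)$ together with $\gamma_\#\mathcal{M}^+=\mathcal{M}^-$ and the $\gamma$-invariance of $d_2$ give
\[
d_2(m(-t),\mathcal{M}^\mp)=d_2(\gamma_\# m(t),\gamma_\#\mathcal{M}^\pm)=d_2(m(t),\mathcal{M}^\pm),
\]
so the two alternatives at $+\infty$ pair correctly with $\mathcal{M}^-$ or $\mathcal{M}^+$ at $-\infty$. The main technical obstacle is arranging the infinite family of disjoint transition intervals $[\tau_1^n,\tau_2^n]$ so that the Cauchy-Schwarz/Young bound produces a divergent sum; but this is straightforward once $q<q_0$ is chosen to ensure that the annulus around $\mathcal{M}^+$ stays uniformly away from $\mathcal{M}^-$ as well.
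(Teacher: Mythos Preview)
Your proof is correct and reaches the same conclusion, but the route you take for the ``upgrade'' from $\liminf$ to $\lim$ differs from the paper's. The paper does not use the Young/Cauchy--Schwarz coupling of kinetic and potential energy at all: instead it exploits the H\"older continuity $d_2^2(m(t),m(s))\le C|t-s|$ coming from \eqref{unmezzoh} to argue that whenever $d_2(m(t_n),\mathcal{M}^\pm)\ge r$ along a subsequence, a whole interval $[t_n-\eta,t_n+\eta]$ of \emph{fixed} length $\eta=\eta(r)$ stays at distance $\ge r/2$ from $\mathcal{M}^\pm$, so $\int\cW(m)\,dt$ picks up $2\eta\,\delta(r/2)$ per interval. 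To rule out oscillation between $\mathcal{M}^+$ and $\mathcal{M}^-$, the paper then uses the intermediate value theorem to locate times $\bar t_n$ with $d_2(m(\bar t_n),\mathcal{M}^+)=q_0$ (hence $d_2(m(\bar t_n),\mathcal{M}^-)\ge q_0$), and feeds these back into the previous observation. Your approach instead transplants the transition-cost estimate from Lemma~\ref{cruciallemma}: on each annular excursion from level $q/2$ to level $q$ you get a fixed energy cost $\tfrac{q}{2}\sqrt{\delta}$ regardless of the duration of the excursion, so no uniform lower bound on interval length is needed. Both arguments are short; the paper's is slightly more elementary (it uses only the potential integral and H\"older continuity separately), while yours ties the lemma more tightly to the mechanism already set up in Lemma~\ref{cruciallemma}, which is a nice structural observation.
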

 \begin{proof} Setting $C=J(m,w)$, observe that by    \eqref{unmezzoh} 
\begin{equation}\label{eq} d_2^2( m(t),  m(s))\leq C |t-s| \qquad \forall t,s\in \R.\end{equation}

So, due to the uniform continuity given by  \eqref{eq} and   to the fact that $J(m,w) = C<+\infty$, it is not possible that along some subsequence $t_n\to +\infty$, there holds 
that  $d_2(m(t_n), \mathcal{M}^\pm)\geq r$ for some $r>0$ independent of $n$. 
Indeed, using \eqref{eq}, we get that there exists $\eta=\eta(r)>0$ independent of $n$ such that $d_2(m(t), \mathcal{M}^\pm) \geq r/2>0$ for all $t\in [t_n-\eta, t_n+\eta]$.  By Lemma \ref{lemmaw0}, $\inf_{\{m\in\cPr,\  d_2(m, \mathcal{M}^\pm)\geq r/2\}} \cW(m)=\delta(r/2)>0$ and then we would get a contradiction with the fact that $J(m,w)=C<+\infty$ since 
\[C\geq \sum_{n} \int_{t_n-\eta}^{t_n+\eta}\cW(m)dt\geq\sum_{n} \int_{t_n-\eta}^{t_n+\eta}\delta(r/2)dt\to +\infty. \]

Therefore, using this observation, first of all we deduce  that for any sequence $t_n\to +\infty$,  either $\lim_n d_2(m(t_n), \mathcal{M}^+)=0$ or $\lim_n d_2(m(t_n), \mathcal{M}^-)=0$. 
Indeed, if it were not the case, 
then there would exist $\eps>0$  and a subsequence $t_n$ such that both $d_2(m(t_n), \mathcal{M}^+)\geq 2\eps$ and  $d_2(m(t_n), \mathcal{M}^-)\geq 2\eps>0$, contradicting the previous assertion.

Assume now that there are   sequences $t_n,s_n\to +\infty$ for which  $\lim_n d_2(m(t_n),\mathcal{M}^+)=0$ and $\lim_n d_2(m(s_n), \mathcal{M}^-)=0$. We may assume that
 $ s_n\leq t_n-1\leq t_n$ for all $n$.  
  
Fix $\delta\in(0,1)$ such that $2\delta<q_0 = \frac12 d_2(\mathcal{M}^+, \mathcal{M}^-)$  and let  $n_0$ such that for all $n\geq n_0$ \[d_2(m(t_n), \mathcal{M}^+)\leq \delta<\frac{q_0}{2} \qquad\text{and}\quad  d_2(m(s_n), \mathcal{M}^-)\leq \delta <\frac{q_0}{2}.\] 
Note that by triangular inequality   \[d_2(m(s_n), \mathcal{M}^+)\geq d_2(\mathcal{M}^+, \mathcal{M}^-)-d_2(m(s_n),\mathcal{M}^-)\geq 2 q_0-\delta\geq \frac32 q_0. \]
 The function $t\in (s_n,   t_n)\to m(t)$ is a continuous function with value in $\cP_2(\R^d)$. Therefore there exists $\bar t_n\in (s_n,t_n)$ such that \[d_2(m(\bar t_n),\mathcal{M}^+)= q_0.\] Again by triangular inequality we get that  \[d_2(m(\bar t_n), \mathcal{M}^-)\geq d_2(\mathcal{M}^+,\mathcal{M}^-)-d_2(m(\bar t_n), \mathcal{M}^+)\geq 2q_0-q_0=q_0.\] 
And this, again, would contradict the boundedness  of the energy. 

Therefore, we get that either for all $t_n\to +\infty$, $\lim_{n} d_2(m(t_n), \mathcal{M}^+)=0$, or  for all $t_n\to +\infty$,  $\lim_{n} d_2(m(t_n), \mathcal{M}^-)=0$. This implies  in particular  the conclusion, for $t\to +\infty$. Proceeding analogously we get the statement for $t\to -\infty$. Finally, if $(m,w)\in   \cK^{\rho, S}$ we conclude by recalling that $m(-t) = \gamma_\# m(t)$ and that $\mathcal{M}^+ = \gamma_\# \mathcal{M}^-$. 
 \end{proof}

 We provide now the existence of a solution to the constrained minimization problem
\begin{equation}\label{mininfty} J(\bar m,\bar w)=\min_{(m, w)\in \cK^{\rho, S}} J(m, w).\end{equation}

\begin{proof}[Proof of Theorem \ref{teo3}] \ \ 

\noindent {\bf Proof of item a)}. 
We use similar arguments to those in the proof of Theorem \ref{teo2}.
 
 {\it Step 1: energy bounds.} 
First of all we show that $\cK^{\rho, S}\neq\emptyset$. Choose   $m_0 \in \cPr$ with  compact support   such that $m_0 = \gamma m_0$ and let  \[d : = d_2(m_0, \mathcal{M}^+)=d_2(m_0,  \mathcal{M}^-).\]  Let $\bar m_+\in  \mathcal{M}^+$ such that  $d_2(m_0, \bar m_+)=d$. 
By Lemma \ref{connettori}, there exists a couple $(m, w) \in \cK^\rho$  that connects $m_0$ at time  $t = 0$ to
$\bar m_+$ at time $t = 1$.    
\[
\tilde m(t) := 
\begin{cases}
m(t) & t \in [0, 1], \\
\bar m_+ & t \in [1, +\infty), 
\end{cases}
\qquad
\tilde w(t,x) := 
\begin{cases}
w(t,x) & t \in [0, 1], \\
0 & t \in [1, +\infty). 
\end{cases}
\] Observe that $d_2(\tilde m(t),  \mathcal{M}^+)\leq d$ for all $t\in [0, 1]$. 
We extend   $(\tilde m, \tilde w)$  on $(-\infty, 0)$ symmetrically: \[(\tilde m(t), \tilde w(t)):=(\gamma \tilde m(-t), -\gamma \tilde w(-t)),\]
 Note that
 \[J(\tilde m,\tilde w)=2\int_0^1 \int_{\R^d} \left| \frac{\tilde w(t,x)}{\tilde m(t,x)}\right|^2 \tilde m(t,x) \, dx dt  +2 \int_0^1 \cW(\tilde m(t)) \, dt \\ \le  \frac{C'}{2},
\]  where  $C'$ is defined in \eqref{defc}. 

\smallskip
 
 {\it Step 2: limit of minimizing sequences}. 
We consider now a minimizing sequence $(m_n,w_n)\in \cK^{\rho, S}$ such that $J(m_n, w_n)\leq \frac{C'}{2}$.   
By the growth condition \eqref{b} on $\cW$, since $\int_0^1 \cW(m_n)dt<C'/2$, there exists $t_n\in[0,1]$ such that $m_n(t_n)$ is uniformly bounded in $\cP_2(\R^d)$. By \eqref{unmezzoh}, we get that $(m_n) \subset C(\R, \cP_2(\R^d))$ is equicontinuous and then  $m_n(t)$  is uniformly bounded in $\cP_2(\R^d)$ for all $t$.  By Ascoli-Arzel\`a theorem and Lemma \ref{lemmaconv},  up to extracting a  subsequence and to a diagonalization procedure, we get that $m_n$ converges  uniformly  
in $C([-L,L], \cP_{p}(\R^d))$ for all $p<2$ and all $L>0$,   to some $\bar m \in C(\R, \cP_p(\R^d))$. Again by lower semicontinuity \eqref{lsc},   and the  growth  condition \eqref{b}, there holds that  $\bar m \in C(\R, \cP_2(\R^d))$.  Moreover $\bar m(-t)=\gamma \bar m(t)$ since symmetry properties pass to the limit, and we can extract a further subsequence that converges also in   $L^\infty([-L,L]\times \R^d)$-weak-$^*$ to $\bar m$, so $0 \le \bar m(x,t) \le \rho$ a.e.. Finally,  reasoning as in \eqref{stimawn}, we get that $w_n$ converges   weakly (up to the extraction of a  subsequence and a diagonalization procedure) in $L^2([-L,L]\times \R^d)$ to some $\bar w$ for every $L>0$. In particular we get that $-\partial_t \bar m+{\rm div}(\bar w) =0$ in distributional sense in $(-\infty, +\infty)\times \R^d$. 

\smallskip
 
 {\it Step 3: finite energy.} 
We fix $L>0$.  By the lower semicontinuity properties and Fatou lemma, we get that  for every $L>0$, 
\[0\leq \int_{-L}^L   \int_{\R^d} \left| \frac{\bar w(t,x)}{\bar m(t,x)}\right|^2 \bar m(t,x) \, dx dt  + \int_{-L}^L \cW(\bar m(t)) \, dt\leq  \liminf_n J(m_n, w_n)=:\eta\leq  \frac{C'}{2}\] and so again by Fatou lemma
\begin{equation}\label{finiteenergy}0\leq J(\bar m,\bar w) \leq  \eta=\inf_{(\mu, v)\in \cK^{\rho, S}} J(\mu,v)\leq \frac{C'}{2}.\end{equation} 
This implies that $(\bar m, \bar w)\in \cK^{\rho,S}$ and moreover that $(\bar m, \bar w)$ is a minimizer.  

\smallskip

\noindent {\bf Proof of item b)}. 

\smallskip

 {\it Step 4: limit of $m^T(\pm T/4)$ as $T\to +\infty$.}  
First of all,  by Theorem \ref{teo2} observe that for all $\eps>0$ small and $T>\max(4s, \bar T)$ (where $s=s(\eps)$ and $\bar T=\bar T(\eps)$ according to the notation of  Theorem \ref{teo2}, in which $q=\eps$),  there exists  a minimizer $(m^T, w^T)$ such that 
\[ 0\leq d_2^2\left( m^T\left(t \right), \mathcal{M}^+\right) \leq \eps\qquad \forall t\in \left(s , \frac{T}{2}-s \right).\]
Since $\frac{T}{4}\in \left(s , \frac{T}{2}-s \right)$, sending $T\to +\infty$, we get that 
$\lim_{T\to +\infty} d_2^2\left( m^T\left(T/4 \right), \mathcal{M}^+\right) =0$ and, by  the symmetry conditions,   $\lim_{T\to +\infty} d_2^2\left( m^T\left(-T/4 \right), \mathcal{M}^-\right)=0$.

\smallskip
 
 {\it Step 5: equicontinuity of $m^T$ and passage to the limit.} 
Let fix $q\in (0, q_0)$ and let $(m^T, w^T)\in \cK_T^{\rho, S}$ be a minimizer of $ J_T$ constructed in Theorem \ref{teo2} with  $T>\bar T(q)(\geq 4)$.    First of all observe that by \eqref{enbound} there exists $C'$ independent of $T$ such that $0\leq J_T(m^T, w^T)\leq C'$, and so in particular by \eqref{unmezzoh}, we get that $(m^T(\cdot))_T\subset C(\R, \cP_2(\R^d))$ is equicontinuous.  By the growth condition \eqref{b}, since $\int_0^1\cW(m^T)dt\leq C'$, there exists  $t(T)\in [0, 1]$ such that  $m^T(t(T))$ is   bounded in $\cP_2(\R^d)$, uniformly with respect to $T$. 

By \eqref{unmezzoh} and triangular inequality we conclude that for all $t\in [0,T]$,  $m^T(t)$ is   bounded in $\cP_2(\R^d)$, uniformly with respect to $T$.  By Ascoli-Arzel\`a theorem and Lemma \ref{lemmaconv}, we get that up to extracting a  subsequence $T_n\to +\infty$ and using a diagonalization procedure, we get that $m^{T_n} $ converges  uniformly in $C([-L,L], \cP_{p}(\R^d))$ for all $p<2$ and all $L>0$, to some $m \in C(\R, \cP_p(\R^d))$, which a posteriori, due to \eqref{lsc} and \eqref{b}, satisfies $m \in C(\R, \cP_2(\R^d))$. 
  Moreover $m(-t)=\gamma m(t)$ since symmetry properties pass to the limit, and we can extract a further subsequence that converges also in   $L^\infty([-L,L]\times \R^d)$-weak-$^*$ to $m$, and $0 \le m(x,t) \le \rho$ a.e.. 
Finally,  reasoning as in \eqref{stimawn}, we get that $w_n$ converges   weakly (up to the extraction of a  subsequence and a diagonalization procedure) in $L^2([-L,L]\times \R^d)$ to some $w$ for every $L>0$. In particular we get that $-\partial_t m+{\rm div}(w) =0$ in distributional sense in $(-\infty, +\infty)\times \R^d$. 

It is immediate to check that the same argument applies to every limit point of  $(m^T,w^T)$. 
 
 \smallskip
 
 {\it Step 6: finite energy of   $(m,w)$.}  Let $(m,w)$ the limit of $(m^{T_n}, w^{T_n})$ as obtained in the previous step. Fix now $L>0$  and let  $n_0$ such that $T_n\geq 4L$ for all $n\geq n_0$.   By the lower semicontinuity properties and Fatou lemma, we get that  for every $L>0$, 
 we get that   
\begin{multline*} 0\leq \int_{-L}^L   \int_{\R^d} \left| \frac{ w(t,x)}{ m(t,x)}\right|^2   m(t,x) \, dx dt  + \int_{-L}^L  \cW( m(t)) \, dt\\
\leq  \liminf_n  \int_{-L}^L \int_{\R^d} \left| \frac{ w^{T_n}(t,x)}{ m^{T_n}(t,x)}\right|^2   m^{T_n}(t,x) \, dx dt  + \int_{-L}^L \liminf_n  \cW( m^{T_n}(t)) \, dt \\
\leq  \liminf_n  \int_{-L}^L \int_{\R^d} \left| \frac{ w^{T_n}(t,x)}{ m^{T_n}(t,x)}\right|^2   m^{T_n}(t,x) \, dx dt  + \liminf_n \int_{-L}^L   \cW( m^{T_n}(t)) \, dt \\
\leq \liminf_n  \left[\int_{-T_n/4}^{T_n/4}  \int_{\R^d} \left| \frac{ w^{T_n}(t,x)}{ m^{T_n}(t,x)}\right|^2  m^{T_n}(t,x) \, dx dt  + \int_{-T_n/4}^{T_n/4} \cW( m^{T_n}(t)) \, dt\right] \\
=\frac{1}{2} \liminf_n  J_{T_n}(m^{T_n}, w^{T_n})\leq \frac{1}{2} C'\end{multline*}  and so by Fatou lemma
\begin{equation}\label{finiteenergy2}0\leq J(m,w) \leq \frac{1}{2} C'.\end{equation} 
This, along with the properties   proved in Step 5,  implies that $(m,w)\in \cK^{\rho,S}$.

\smallskip

 {\it Step 7: $(m,w)$ is a solution of \eqref{mininfty}.} 
Since $J_{T_n}(m^{T_n}, w^{T_n})$ is equibounded, up to passing to a further subsequence we may assume that $\lim_n J_{T_n}(m^{T_n}, w^{T_n})=e>0$. 
Arguing as above it is immediate to check that
\[e=\lim_n J_{T_n}(m^{T_n}, w^{T_n})\geq 2J(m,w).\]

We claim that   \[e\leq 2J(\bar m,\bar w)\] where $(\bar m, \bar w)$ is a minimizer constructed in item a). If the claim is true, 
then we have that $2J(m,w)\leq e\leq 2J(\bar m, \bar w)$ which implies immediately that $(m,w)$ is a minimizer and moreover that $e=2J(\bar m,\bar w)$. 

Assume by contradiction that for some $\delta>0$, there holds 
 \[e>2J(\bar m,\bar w)+\delta.\]
Let us fix $T_n$ and consider $(\bar m,\bar w)$ restricted to $\left[-\frac{T_n}{4}, \frac{T_n}{4}\right]$. Extend them to $\left[-\frac{T_n}{4}, \frac{3T_n}{4}\right]$ by putting $\tilde m_n\left(t+\frac{T_n}{4}\right):=\bar m\left(\frac{T_n}{4}-t\right)$, $\tilde w_n\left(t+\frac{T_n}{4}\right):=-\bar w\left(\frac{T_n}{4}-t\right)$ for $t\in \left[0,\frac{T_n}{2}\right]$ and then extend them periodically in $\R$. It is easy to check that  $\gamma \tilde m_n(t)=\tilde m_n(-t)$. So $(\tilde m_n, \tilde w_n)\in \cK_{T_n}$ and  therefore \[J_{T_n}(m^{T_n}, w^{T_n})\leq  J_{T_n}(\tilde m_n, \tilde w_n)=2\int_{-\frac{T_n}{4}}^{\frac{T_n}{4}} \int_{\R^d} \left| \frac{\bar w(t,x)}{\bar m(t,x)}\right|^2 \bar m(t,x) \, dx  +\cW(\bar m(t))dt\leq 2J(\bar m,\bar w)<e-\delta.\] Taking $n$ sufficiently large, this gives a contradiction with the fact that $e=\lim_n J_{T_n}(m^{T_n}, w^{T_n})$. 

\end{proof} 
Finally arguing as in Corollary \ref{remsimm}, one can show that any constrained minimizer  in $\cK^{\rho, S}$  (so under a symmetry constraint with respect to $t = 0$)   is an heteroclinic connection, in the sense of Definition \ref{hetero}. In other words, it has the minimality property \eqref{nequi} in the broader class of competitors $\cK^{\rho}$.
 
\begin{corollary}\label{ultimoc}  Let $(m, w)\in\cK^{\rho, S}$ be any solution to the minimization problem \eqref{mininfty}. Then $(m,w)$ is an heteroclinic connection in the sense of Definition \ref{hetero}.
\end{corollary}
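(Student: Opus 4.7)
The plan is to adapt the two–step argument used for the periodic brake orbits in Corollary \ref{remsimm} to the infinite time horizon. Let $(m,w)\in\cK^{\rho,S}$ be a minimizer of $J$ as produced by Theorem \ref{teo3}. First I would verify the asymptotic behavior required by Definition \ref{hetero}. By construction $J(m,w)<+\infty$, so Lemma \ref{lemmat} applies and forces $d_2(m(t),\mathcal{M}^-)\to 0$ as $t\to-\infty$ together with $d_2(m(t),\mathcal{M}^+)\to 0$ as $t\to+\infty$ (the reversed orientation is ruled out, or equivalently handled, by relabeling using $\mathcal{M}^+=\gamma_\#\mathcal{M}^-$ and the symmetry constraint $m(-t)=\gamma_\# m(t)$).

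Next I would establish the Nash minimality property \eqref{nequi} restricted to symmetric competitors. Exactly as in the proof of Corollary \ref{remsimm}, convexity of $(m,w)\mapsto |w|^2/m$ combined with the identity $f=\delta_m\cW$ and a first–order perturbation of $J$ (in the spirit of \cite{BC16}, here applied on $(-\infty,+\infty)$ rather than $[0,T]$) shows that for every $(\mu,v)\in\cK^{\rho,S}$
\[
\int_{-\infty}^{+\infty}\!\!\int_{\R^d}\frac{|w|^2}{m}+f(x,m)m\,dxdt \;\le\; \int_{-\infty}^{+\infty}\!\!\int_{\R^d}\frac{|v|^2}{\mu}+f(x,m)\mu\,dxdt.
\]
Integrability issues over the unbounded time interval are not an obstruction: whenever the right hand side is $+\infty$ the inequality is trivial, and otherwise the finite–energy bound gives enough decay (via Lemma \ref{lemmat}) to justify the linearization.

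It remains to remove the symmetry restriction on the competitor. Given any $(\mu,v)\in\cK^{\rho}$, set
\[
\tilde\mu(t):=\tfrac12\mu(t)+\tfrac12\gamma_\#\mu(-t),\qquad \tilde v(t):=\tfrac12 v(t)+\tfrac12\gamma_\# v(-t).
\]
Linearity of the continuity equation and the bound $\rho$ on both terms yield $(\tilde\mu,\tilde v)\in\cK^{\rho,S}$. Exploiting $\cW(\gamma_\# \mu)=\cW(\mu)$ (hence $f(x,\mu)=f(\gamma(x),\gamma_\#\mu)$) together with the symmetry $m(-t,\gamma(x))=m(t,x)$, the change of variables $(t,x)\mapsto(-t,\gamma(x))$ followed by joint convexity of $(\mu,v)\mapsto|v|^2/\mu$ gives
\[
\int_{-\infty}^{+\infty}\!\!\int_{\R^d}\frac{|v|^2}{\mu}+f(x,m)\mu\,dxdt \;\ge\; \int_{-\infty}^{+\infty}\!\!\int_{\R^d}\frac{|\tilde v|^2}{\tilde\mu}+f(x,m)\tilde\mu\,dxdt,
\]
exactly as in Corollary \ref{remsimm}. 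Chaining this with the symmetric minimality proved in the previous paragraph yields \eqref{nequi} for arbitrary $(\mu,v)\in\cK^{\rho}$.

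The main technical point is the careful handling of the integrals on the whole real line: one needs that the symmetrization does not destroy the admissibility class, and that the linearized optimality inequality passes to the limit $L\to +\infty$ when truncating to $[-L,L]$. Both are routine once one observes that, by finite energy of $(m,w)$ and the asymptotic convergence to $\mathcal{M}^\pm$, the interaction term $\int f(x,m)(\mu-m)$ is controlled uniformly in $L$, and that every competitor with infinite action makes the inequality trivial. This completes the verification that $(m,w)$ satisfies both conditions of Definition \ref{hetero}.
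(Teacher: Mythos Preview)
Your proposal is correct and follows essentially the same approach as the paper, which simply states that the proof ``follows the same lines as the proof of Corollary \ref{remsimm}'' and omits the details. Your write-up supplies precisely those details: the asymptotic behavior via Lemma \ref{lemmat}, the linearized minimality among symmetric competitors by convexity of $(m,w)\mapsto |w|^2/m$, and the symmetrization trick to pass to general competitors in $\cK^\rho$ (noting correctly that here only the single symmetry $m(-t)=\gamma_\# m(t)$ needs to be removed, unlike the periodic case).
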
 
\begin{proof} We omit the proof, since it  follows the same lines as the proof of Corollary \ref{remsimm}. \end{proof} 

\section{A model problem} \label{secmodel}
In this section we describe a model to which results of the previous sections apply. 
We define on $\cPr$ (for any fixed $\rho>0$) the following potential energy
 \begin{equation}
\label{sta}\mathcal{W}(m)=\int_{\R^d} W(x)m(dx)-\int_{\R^d}\int_{\R^d} K(|x-y|)m(dx)m(dy).
\end{equation}
First of all we describe our main assumptions on $K$ and $W$ and then we check  all the conditions that are needed in Theorems \ref{teo2}, \ref{teo3}. Note that, as we will see below, $\mathcal{W}(m)$ has minimizers on $\cPr$, but $\min_\cPr \cW < 0$. Therefore, to apply Theorems \ref{teo2}, \ref{teo3} one just needs to add to $\cW$ the renormalization constant $\min_\cPr \cW$, that is to consider
\[
\cW_0(m) = \cW(m) - \min_\cPr \cW.
\]
\subsection{Standing assumptions on $W$ and $K$}\label{asssec}
 We start describing the assumptions on the local energy $\int_{\R^d}W(x)dm(x)$. Let $W:\R^d\to[0, +\infty)$ be a  confining double-well potential such that 
\begin{equation}\label{assw} \begin{split} & \bullet\,\text{$W\in C(\R^d)$ is non-negative},\ \\ & \bullet\,  \exists C>0,\  \text{  such that  }C^{-1}|x|^2-C\leq W(x)\leq C|x|^2+C\\ 
& \bullet\, \exists a^+, a^-\in \R^d, \tilde r > 0 \text{ such that  }B(a^+, \tilde r) \cap B(a^-, \tilde r) = \emptyset \\ & \quad
  \qquad { and } \quad W(x)=0 \Leftrightarrow x\in B(a^+, \tilde r)\cup B(a^-, \tilde r).\end{split} 
\end{equation} 
Note that we require $W$ to have two disjoint flat regions $B(a^\pm, \tilde r)$. Moreover, we assume that $W$ is invariant under a  reflection $\gamma:\R^d\to \R^d$, that is
\begin{equation}\label{ref} W(x)= W(\gamma(x))\qquad x\in\R^d.
\end{equation}  In particular this implies that $\gamma (a^+)=a^-$ and $|a^\pm| - \tilde r > 0$. 
Finally, we assume  that  the plateaus of $W$ are sufficiently large with respect to the density constraint $\rho$, in the following sense:
 \begin{equation}\label{ass3} \rho\geq \tilde \rho :=\frac1{\omega_d \tilde r^d},  
\end{equation} where $\tilde r$ is defined  in \eqref{assw}, and $\omega_d$ is the Lebesgue measure of $B_1$.  See Figure \ref{fig2} for an example of $W$ satisfying our assumptions.

 We describe now the assumptions on the interaction energy $-\int_{\R^d}\int_{\R^d} K(|x-y|)m(dx)m(dy)$ and some basic properties. We consider a radially symmetric  interaction kernel  $K(|x|)$, where $K:[0,+\infty)\to [0, \infty)$ is a function such that  
\begin{equation}\label{assK}\begin{split} & \bullet\, r\mapsto  r^{d-1} K(r)  \in L^1_{loc}([0, +\infty), [0, +\infty)), \\ & \bullet\, \text{$K$ is nonincreasing}, \\
& \bullet\, \text{$\lim_{r\to 0} K(r)-K(t+r)>0$ for every $t$}, \\ & \bullet\, \lim_{r\to +\infty} K(r)=0.\end{split}
\end{equation}
Moreover,  we assume that $K$ is positive definite, which means  that \begin{equation}\label{pos} \int_{\R^d}\int_{\R^d} f(x)f(y)K(|x-y|) dxdy\geq 0\quad \text{ for all $f\in L^1(\R^d)$}\end{equation}  and $\int_{\R^d}\int_{\R^d} f(x)f(y)K(|x-y|) dxdy= 0$ if and only if $f=0$. Note that positive definiteness is    equivalent to assume that the Fourier transform of $K$ is a positive function. 

We define the energy interaction functional for $f\in L^1(\R^d)$ 
\begin{equation}\label{int} \mathcal{I}(f)=\int_{\R^d}\int_{\R^d} f(x)f(y) K(|x-y|)dxdy=\int_{\R^d} f(x)V_f(x)dx
\end{equation} where $V_f$ is the interaction potential 
\begin{equation}\label{pot} V_f(x)=f*K(x)= \int_{\R^d}  f(y) K(|x-y|)dy.
\end{equation} 
\begin{remark}\upshape
If $K$ is positive definite, then the map $f\mapsto V_f=K*f$ is monotone increasing in the sense that
\[\int_{\R^d} (V_{f_1}-V_{f_2})(f_1-f_2)dx \geq 0\qquad \forall f_1, f_2\in L^1(\R^d). \]
\end{remark}
We recall a well known result on the interaction potential. 
\begin{lemma}\label{lemmapot} Assume \eqref{assK}. 
Let $f\in L^1(\R^d)\cap L^\infty(\R^d)$. Then $V_f\in C(\R^d)$ and $\lim_{|x|\to +\infty} V_f(x)=0$.
\end{lemma}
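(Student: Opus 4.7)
The plan is to prove continuity of $V_f$ by a standard decomposition-plus-translation-continuity argument, and to prove decay at infinity by a three-piece split of the integral. The potential singularity of $K$ at the origin is the only mildly delicate point, and it is handled by isolating it in a small ball.

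For the continuity, I decompose
\[
K(|\cdot|) = K_1 + K_2, \qquad K_1 := K(|\cdot|)\chi_{B_1}, \qquad K_2 := K(|\cdot|)\chi_{\R^d\setminus B_1}.
\]
By the first bullet of \eqref{assK} and a polar change of variables, $\int_{B_1} K(|y|)dy = |S^{d-1}|\int_0^1 r^{d-1}K(r)\,dr < \infty$, so $K_1\in L^1(\R^d)$. Since $K$ is nonincreasing, $K_2\le K(1)$ on its support, hence $K_2\in L^\infty(\R^d)$. Writing $V_f = K_1*f + K_2*f$, I estimate
\[
|(K_1*f)(x+h)-(K_1*f)(x)| \le \|f\|_\infty \|\tau_{-h}K_1-K_1\|_{L^1},
\]
\[
|(K_2*f)(x+h)-(K_2*f)(x)| \le \|K_2\|_\infty \|\tau_{-h}f-f\|_{L^1},
\]
and both right-hand sides tend to $0$ as $h\to 0$ by continuity of translation in $L^1$ (applied to $K_1\in L^1$ and $f\in L^1$ respectively). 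This gives $V_f\in C(\R^d)$.

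For the decay at infinity, fix $\eps > 0$. First choose $\delta > 0$ so small that $\|f\|_\infty |S^{d-1}|\int_0^\delta r^{d-1}K(r)\,dr < \eps$, which is possible by the local integrability of $r^{d-1}K(r)$. Next choose $R > 0$ so large that $K(\delta)\int_{|y|>R}|f(y)|\,dy < \eps$, which is possible since $f\in L^1$. Then split the defining integral as
\[
V_f(x) = \int_{|x-y|<\delta}\!\! f\, K(|x-y|)\,dy + \int_{|x-y|\ge \delta,\ |y|\le R}\!\! f\, K(|x-y|)\,dy + \int_{|x-y|\ge\delta,\ |y|>R}\!\! f\, K(|x-y|)\,dy.
\]
The first piece is bounded in absolute value by $\|f\|_\infty |S^{d-1}|\int_0^\delta r^{d-1}K(r)\,dr < \eps$ (no dependence on $x$). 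The third piece is bounded by $K(\delta)\int_{|y|>R}|f|\,dy < \eps$ (since $K$ is nonincreasing and $|x-y|\ge\delta$). For the middle piece, whenever $|x| > R$ and $|y|\le R$ we have $|x-y|\ge |x|-R$, so using monotonicity of $K$,
\[
\left|\int_{|x-y|\ge \delta,\ |y|\le R}\!\! f\, K(|x-y|)\,dy\right| \le K(|x|-R)\,\|f\|_{L^1} \;\longrightarrow\; 0 \quad\text{as } |x|\to\infty,
\]
by the last bullet of \eqref{assK}. Hence $\limsup_{|x|\to\infty}|V_f(x)|\le 2\eps$, and since $\eps$ was arbitrary, $V_f(x)\to 0$.

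No step presents a genuine obstacle: the only subtlety is keeping the local singularity of $K$ at $0$ (which prevents $K\in L^\infty$ globally) separated from its tail (which prevents $K\in L^1$ globally), and both the continuity decomposition $K=K_1+K_2$ and the three-piece split in the decay argument are designed precisely to isolate these two regimes.
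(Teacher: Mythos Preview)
Your proof is correct. The paper takes a slightly different route: it regularizes the kernel by cutting off the singularity at the origin, setting $K_\eps(|x|)=K(|x|)\eta(|x|/\eps)$ with a smooth cutoff $\eta$; it then proves continuity and decay at infinity for $V_f^\eps = K_\eps * f$ (where the kernel is bounded, so only translation continuity of $f$ in $L^1$ is needed), and finally shows $V_f^\eps\to V_f$ uniformly using local integrability of $K$ near the origin. Your approach avoids the approximation-plus-uniform-convergence step by splitting $K$ directly into an $L^1$ part near the origin and an $L^\infty$ tail for continuity, and by using a three-piece (rather than two-piece) split for decay; this is marginally more direct. Conversely, the paper's scheme has the conceptual tidiness of reducing everything to the bounded-kernel case and passing to the limit once. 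Both arguments exploit the same ingredients from \eqref{assK}: local integrability of $r^{d-1}K(r)$, monotonicity, and decay of $K$ at infinity.
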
 
\begin{proof} Let $\eta:[0, +\infty)\to [0,1]$ be a smooth function such that $\eta=0$ in $[0,1]$ and $\eta=1$ in $[2, +\infty)$ and define $K_\eps(|x|)=K(|x|)\eta (|x|/\eps)$ for $\eps>0$. 
Then $V^\eps_f=K_\eps*f\in C(\R^d)$ since $|V^\eps_f(x+h)-V^\eps_f(x)|\leq \|K_\eps\|_\infty\|f(\cdot +h)-f(\cdot)\|_1\to 0$ as $|h|\to 0$. 
Moreover $\lim_{|x|\to +\infty} V_f^\eps(x)=0$: indeed for $R>2$ and $\eps\leq 1$, recalling the assumptions on $K$, 
\begin{eqnarray*} |V_f^\eps(x)| &\leq&    \int_{B(0, R)} K_\eps(|y|)|f(x-y)|dy+  \int_{\R^d\setminus B(0, R)} K_\eps(|y|)|f(x-y)dy|\\&\leq& K(\eps)\int_{B(x,R)} |f(y)|dy+ K(R)\|f\|_1\to 0\qquad \text{ as $|x|\to +\infty$ and $R\to +\infty$}.\end{eqnarray*} 
We conclude observing that  $V^\eps_f$ converges uniformly to $V_f$ as $\eps\to 0$ since 
\[|V^\eps_f(x)-V_f(x)|\leq  \int_{B(0, 2\eps)} K(|y|) |f(x-y)|dy\leq \|f\|_\infty  \int_{B(0, 2\eps)} K(|y|)  dy.\]  
%
%
\end{proof} 
We recall  the Riesz rearrangement inequality (see \cite{LLbook}).  For $f\in L^1(\R^d)$ such that $f\geq 0$, we define the $f^*$ is the spherical rearrangement of $f$, that is 
\[f^*(x)=\int_0^{+\infty} \chi_{\{y\ | f(y)>t\}^*} (x)dt \quad \text{ where } \{y\ | f(y)>t\}^*=B(0, r), \text{ with }\omega_n r^n=|\{y\ | f(y)>t\}|.\]
The Riesz rearrangement inequality states that 
\[\int_{\R^d}\int_{\R^d} f(x)f(y) K(|x-y|)dxdy\leq \int_{\R^d}\int_{\R^d} f^*(x)f^*(y) K^*(|x-y|)dxdy.\] 
Note that $K^*(|x|)= K(|x|)$ since $K$ is radially symmetric and nonincreasing. So we conclude that 
for every $f\in L^1(\R^d)$ such that $f\geq 0$, 
\begin{equation}\label{riesz}  \mathcal{I}(f)\leq \mathcal{I}(f^*).
\end{equation}  
We recall a well known result, see \cite{LLbook}. 
\begin{lemma}\label{cKbounds} Assume \eqref{assK}.
Let $r_\rho=(1/\rho\omega_d)^{1/d}$. 
There holds
\[\sup_{m\in \cPr}\mathcal{I}(m)=\mathcal{I}(\rho\chi_{B_{r_\rho}})> 0.\]
%
\end{lemma}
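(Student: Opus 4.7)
The plan is to combine the Riesz rearrangement inequality \eqref{riesz} with a double application of the bathtub principle, identifying $\tilde m := \rho\chi_{B_{r_\rho}}$ as the maximizer of $\mathcal{I}$ over $\cPr$. First I would note that $|B_{r_\rho}|=\omega_d r_\rho^d=1/\rho$, so $\int\tilde m\,dx=1$ and therefore $\tilde m\in\cPr$; positivity $\mathcal{I}(\tilde m)>0$ is then immediate from the positive-definiteness hypothesis \eqref{pos}, since $\tilde m$ is a nonzero $L^1$ function.

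For the upper bound, I fix an arbitrary $m\in\cPr$. By \eqref{riesz}, $\mathcal{I}(m)\le\mathcal{I}(m^*)$, and since symmetric decreasing rearrangement preserves the $L^1$- and $L^\infty$-norms of $m$ (while only lowering its second moment), $m^*\in\cPr$ as well. Hence it suffices to show $\mathcal{I}(m)\le\mathcal{I}(\tilde m)$ under the additional assumption that $m$ is itself radially symmetric and nonincreasing.

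The key observation is that, under this assumption, the potential $V_m=K*m$ is also radially symmetric and nonincreasing. Via the layer-cake representations
\[
K=\int_0^\infty\chi_{B_{R_1}}\,d\nu_1(R_1),\qquad m=\int_0^\infty\chi_{B_{R_2}}\,d\nu_2(R_2),
\]
available because both $K$ and $m$ are nonnegative and nonincreasing in the radial variable, $V_m$ becomes a positive combination of convolutions $\chi_{B_{R_1}}*\chi_{B_{R_2}}$. Each such convolution equals $x\mapsto|B_{R_1}\cap B(x,R_2)|$, which is manifestly radial and nonincreasing in $|x|$; hence $V_m$, and likewise $V_{\tilde m}$, inherits these structural properties.

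I would then invoke the bathtub principle for the linear functional $f\mapsto\int f\,V_m\,dx$ on the admissible class $\{f\in L^\infty(\R^d):0\le f\le\rho,\ \int f=1\}$: since $V_m$ is radial and nonincreasing, the maximum is achieved by saturating $f=\rho$ on the smallest centered ball of volume $1/\rho$, namely by $\tilde m$. Therefore $\int m\,V_m\le\int\tilde m\,V_m$. By the symmetry of $K$, $\int\tilde m\,V_m=\int m\,V_{\tilde m}$, and a second application of the bathtub principle (now with $V_{\tilde m}$ in place of $V_m$) yields $\int m\,V_{\tilde m}\le\int\tilde m\,V_{\tilde m}=\mathcal{I}(\tilde m)$. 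Chaining,
\[
\mathcal{I}(m)=\int m\,V_m\le\int\tilde m\,V_m=\int m\,V_{\tilde m}\le\int\tilde m\,V_{\tilde m}=\mathcal{I}(\tilde m).
\]
The main technical point I expect is verifying that $V_m$ inherits the radial decreasing property from $K$ and $m$ (given only that $K$ is nonincreasing, not strictly so); the layer-cake reduction to convolutions of indicator functions of centered balls makes this transparent, and the rest of the argument is a mechanical chaining of the two bathtub inequalities.
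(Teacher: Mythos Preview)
Your argument is correct and slightly different from the paper's (very brief) proof. The paper reduces via Riesz rearrangement to radially nonincreasing competitors and then appeals to a \emph{second variation} computation (as in Proposition~\ref{propminima}) to conclude that any maximizer must be a characteristic function, hence $\rho\chi_{B_{r_\rho}}$. You instead bypass the second-variation step by a direct bathtub argument: once $V_m$ is shown to be radial and nonincreasing, the chain $\int m\,V_m\le\int\tilde m\,V_m=\int m\,V_{\tilde m}\le\int\tilde m\,V_{\tilde m}$ gives the bound immediately. This is arguably more elementary and has the advantage that it does not require first establishing existence of a maximizer; on the other hand, the paper's route yields the additional qualitative information that \emph{every} maximizer is a characteristic function, which your argument does not directly give (though the lemma as stated does not ask for this).

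One small point: you invoke \eqref{pos} for the strict positivity $\mathcal I(\tilde m)>0$, but the lemma only assumes \eqref{assK}. This is harmless, since \eqref{assK} alone already forces $K\ge 0$ with $K$ not identically zero (indeed $K(r)>0$ for small $r$ by the third and fourth bullets), so $\mathcal I(\tilde m)=\int\int\tilde m(x)\tilde m(y)K(|x-y|)\,dx\,dy>0$ follows directly from nonnegativity of the integrand and positivity on a set of positive measure.
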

\begin{proof} It follows from the Riesz rearrangement inequality that maximizers  are radially symmetric and nonincreasing. 
The fact that maximizers of $\mathcal{I}(f)$  in $ \cPr$ are characteristic functions it can be proven looking at the second variation of the functional (see \cite{LLbook}, and see also  the following Proposition \ref{propminima} for a similar argument).  \end{proof}
\begin{remark}\label{remb} \upshape
Note that, due to the fact that $W(x)\geq 0$ and to Lemma \ref{cKbounds}, we get that for all $m\in\cPr$ there holds
\[-\mathcal{I}(\rho\chi_{B_{r_\rho}})\leq\ \cW(m)\leq \int_{\R^d} W(x)m(x).\]
\end{remark} 
 \subsection{Assumptions \eqref{rif}, \eqref{b}} 
 We check that $\cW$ defined in \eqref{sta}, under the standing assumptions in Section \ref{asssec}, 
  satisfies  the growth condition \eqref{b}, and the reflection invariance \eqref{rif}.
 \begin{proposition} Under the assumptions \eqref{assw}, \eqref{ref}, \eqref{assK},  \eqref{pos}, the functional $\cW$ in \eqref{sta} satisfies \eqref{rif}, \eqref{b}.
 \end{proposition}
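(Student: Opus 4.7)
The plan is to verify the two conditions separately, with \eqref{rif} reducing to the $\gamma$-invariance of each summand in \eqref{sta}, and \eqref{b} following from the explicit quadratic growth of $W$ together with the uniform upper bound on $\mathcal{I}(m)$ provided by Lemma~\ref{cKbounds}.

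For \eqref{rif}, I would split $\cW$ into its local and nonlocal parts. For the local part, a change of variables gives
\[
\int_{\R^d} W(x)\, d(\gamma_\# m)(x)=\int_{\R^d} W(\gamma(x))\, dm(x)=\int_{\R^d} W(x)\, dm(x),
\]
using \eqref{ref}. For the nonlocal part, since $\gamma$ is a Euclidean reflection it preserves distances, so $|\gamma(x)-\gamma(y)|=|x-y|$ and
\[
\int\!\!\int K(|x-y|)\,d(\gamma_\# m)(x)\,d(\gamma_\# m)(y)=\int\!\!\int K(|\gamma(x)-\gamma(y)|)\,dm(x)\,dm(y)=\mathcal{I}(m).
\]
Hence $\cW(\gamma_\# m)=\cW(m)$ for every $m\in\cPr$. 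The second half of \eqref{rif}, namely $\mathcal{M}^+=\gamma_\#\mathcal{M}^-$, then follows immediately from the identification of the minimum set as the union of two disjoint components: the $\gamma$-invariance of $\cW$ forces $\gamma_\#$ to permute the connected components of the minimum set, and since $\gamma$ exchanges the two flat wells of $W$ (centred at $a^\pm$ with $\gamma(a^+)=a^-$), it maps one component onto the other rather than fixing each.

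For \eqref{b}, the upper bound is essentially free: since $\mathcal{I}(m)\ge 0$ by \eqref{pos}, one has
\[
\cW(m)\le \int_{\R^d} W(x)\, m(dx)\le \int_{\R^d}(C|x|^2+C)\, m(dx)=C\left(1+\int_{\R^d}|x|^2\, m(dx)\right),
\]
by the quadratic growth in \eqref{assw}. For the lower bound, I would combine the quadratic \emph{lower} bound on $W$ with Lemma~\ref{cKbounds}, which provides the $m$-independent upper bound $\mathcal{I}(m)\le \mathcal{I}(\rho\chi_{B_{r_\rho}})=:M_\rho<\infty$ on $\cPr$. This yields
\[
\cW(m)\ge \int_{\R^d}\bigl(C^{-1}|x|^2-C\bigr)m(dx)-M_\rho\ge C^{-1}\int_{\R^d}|x|^2\, m(dx)-(C+M_\rho).
\]
Choosing $C_\cW:=\max\{C,\,C+M_\rho\}$ then gives both inequalities of \eqref{b} simultaneously.

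The argument is essentially routine once the right tools are in place; the only point that requires mild care is ensuring that the upper bound on $\mathcal{I}$ provided by Lemma~\ref{cKbounds} is truly uniform over $\cPr$ (so that $M_\rho$ depends only on $\rho$, not on the particular $m$), which is what makes the lower bound in \eqref{b} a genuine coercivity estimate in the second moment rather than a trivial one. All other steps are direct consequences of the standing assumptions \eqref{assw}, \eqref{ref}, \eqref{assK}, \eqref{pos}.
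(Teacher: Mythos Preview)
Your argument is correct and follows essentially the same route as the paper: a change of variables using \eqref{ref} and the isometry property of $\gamma$ for \eqref{rif}, and the quadratic bounds on $W$ from \eqref{assw} combined with Lemma~\ref{cKbounds} for \eqref{b}. The only minor difference is organizational: the paper's proof here verifies just the invariance $\cW(\gamma_\# m)=\cW(m)$ and defers the identification $\mathcal{M}^+=\gamma_\#\mathcal{M}^-$ to the explicit characterization of minimizers in Proposition~\ref{classiminimi}; your sketch of that part anticipates a structure result not yet available at this point, so it would be cleaner to simply note that the second clause of \eqref{rif} is established later once $\mathcal{M}^\pm$ are described.
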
 
 Since $\cW$ and $\cW_0$ differ by a constant, the same conclusion holds for $\cW_0$. Moreover, note that $ \frac{\delta }{\delta m} \cW(m)=  W(x)-\int_{\R^d}  K(|x-y|)m(dy)$. Hence, as a direct consequence of the positivity of $K$ assumed in \eqref{pos}, $\cW$ (and $\cW_0$) is ``aggregating'', namely it satisfies $\int_{\R^d}( f(x,m)- f(x,m'))d(m-m')(x) \le 0$ for all $m, m'$.
  
 \begin{proof}  
 We observe that by \eqref{ref} and the symmetry properties of $K$, 
 \begin{multline*} \cW(\gamma(m))=\int_{\R^d} W(x)\gamma m(dx)-\int_{\R^d}\int_{\R^d} K(|x-y|)\gamma m(dx)\gamma m(dy)\\=\int_{\R^d} W(\gamma x)  m(dx)-\int_{\R^d}\int_{\R^d} K(|\gamma x-\gamma y|)m(dx) m(dy) =\cW(m)\end{multline*} which is \eqref{rif}. 
 Finally, by \eqref{assw} and Lemma  \ref{cKbounds} we get that for all $m\in \cPr$, there holds 
 \[C^{-1}\int_{\R^d}|x|^2 m(dx)-C -\mathcal{I}(\rho\chi_{B_{r_\rho}}) \leq \cW(m)\leq  C\int_{\R^d}|x|^2 m(dx)+C, \]
 which is \eqref{b}.
 \end{proof}
 
 \subsection{Lower semicontinuity properties of $\cW$: assumption \eqref{lsc}}
We provide continuity and semicontinuity properties of $\cW$ (and $\cW_0$). Let us first check that $\cW$ is lower semicontinuous with respect narrow convergence, 
which implies \eqref{lsc}. 

\begin{proposition}\label{propcontK} Assume that $K$ satisfies \eqref{assK} and let $\mathcal{I}$ be as in \eqref{int}. Let $m_k, m \in \cPr$ and suppose that  $m_k$ converges to  $m$ narrowly. Then, $\lim_k \mathcal{I}(m_k)=\mathcal{I}(m)$. In particular, the functional $\cW$ satisfies \eqref{lsc}.
\end{proposition}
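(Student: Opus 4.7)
The plan is to establish the continuity of $\mathcal{I}$ under narrow convergence within $\cPr$, and then to deduce \eqref{lsc} for $\cW = \int W\,dm - \mathcal{I}(m)$ as an immediate consequence. The strategy for $\mathcal{I}$ is to split the kernel with two thresholds $0<\delta<R$ and treat the singularity at the origin, the tail at infinity and the bounded middle piece separately.

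Using $m_k \le \rho$ a.e.\ and Fubini,
\[
\iint_{|x-y|<\delta} K(|x-y|)\,m_k(x) m_k(y)\,dx\,dy \le \rho \int_{B_\delta} K(|z|)\,dz \to 0 \quad \text{as } \delta \to 0,
\]
by the local integrability of $r^{d-1} K(r)$ in \eqref{assK}. Since $K$ is nonincreasing with $K(r)\to 0$ at infinity,
\[
\iint_{|x-y|>R} K(|x-y|)\,m_k(x) m_k(y)\,dx\,dy \le K(R) \to 0 \quad \text{as } R\to\infty.
\]
Both estimates are uniform in $k$ and hold with $m$ in place of $m_k$.

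The core of the argument is the middle piece
\[
I_k^{\delta,R} := \iint_{\delta \le |x-y| \le R} K(|x-y|)\,m_k(x) m_k(y)\,dx\,dy
\]
for fixed $\delta, R$. The obstacle is that $K \chi_{[\delta,R]}$ is only bounded and measurable, since $K$ is merely nonincreasing, so it cannot serve directly as a test function against the narrowly convergent product measures. My plan is to fix $\eta>0$ and approximate $K \chi_{[\delta,R]}$, viewed as a radial function on $\R^d$, by some $\phi \in C_c((\delta/2, 2R))$ with
\[
\rho \int_{\R^d} |K(|z|)\chi_{[\delta,R]}(|z|) - \phi(|z|)|\,dz < \eta,
\]
which is possible because $K\chi_{[\delta,R]}$ is bounded by $K(\delta)$ on an annulus of finite measure. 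The $L^\infty$ bound $m_k \le \rho$ and Fubini then give, uniformly in $k$,
\[
\bigl| I_k^{\delta,R} - \iint \phi(|x-y|)\,m_k(x) m_k(y)\,dx\,dy \bigr| \le \rho \int_{\R^d} |K(|z|)\chi_{[\delta,R]}(|z|) - \phi(|z|)|\,dz < \eta,
\]
and the analogous bound for $m$. Since $\phi(|x-y|)$ is bounded and continuous on $\R^{2d}$, and narrow convergence $m_k \to m$ implies narrow convergence of the products $m_k \otimes m_k \to m \otimes m$ in $\cP(\R^{2d})$, we obtain
\[
\iint \phi(|x-y|)\,m_k(x) m_k(y)\,dx\,dy \to \iint \phi(|x-y|)\,m(x) m(y)\,dx\,dy.
\]
A standard three-$\eta$ argument then yields $I_k^{\delta,R} \to I^{\delta,R}$, and combining with the near-diagonal and tail estimates gives $\mathcal{I}(m_k) \to \mathcal{I}(m)$.

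Finally, \eqref{lsc} follows immediately: convergence in $d_p$ with $p<2$ implies narrow convergence; the map $m \mapsto \int W\,dm$ is lower semicontinuous under narrow convergence by Lemma \ref{lemmaconv} (since $W \ge 0$ is continuous); and $\mathcal{I}(m_n) \to \mathcal{I}(m)$ by the above, so
\[
\liminf_n \cW(m_n) \ge \int W\,dm - \mathcal{I}(m) = \cW(m).
\]
The main delicate point is the middle-piece convergence: the $L^\infty$ bound on $m_k$ is crucial for reducing a two-dimensional approximation error to a one-dimensional $L^1$ approximation of the merely measurable radial kernel. Notably, no global integrability of $K$ is needed, which is essential to cover Riesz-type kernels where $K \notin L^1(\R^d)$.
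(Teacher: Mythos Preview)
Your proof is correct and takes a genuinely different route from the paper's. The paper first invokes the positive definiteness of $K$ (assumption \eqref{pos}, not stated in the proposition but used in its proof) to apply Cauchy--Schwarz and reduce to showing that $\iint (m_k-m)(x)(m_k-m)(y)K(|x-y|)\,dx\,dy \to 0$; it then splits only at a large radius $R$, rewrites the bounded-range part in terms of the convolutions $F_k = (K\chi_{|\cdot|\le R})*m_k$, and concludes using weak-$*$ $L^\infty$ convergence of $m_k$ against $F\in L^1$ together with $F_k\to F$ in $L^1$ (via pointwise convergence and equality of $L^1$ norms). Your approach instead splits into three annular regions, handles the singularity and the tail uniformly via the $L^\infty$ bound, and on the middle region approximates the bounded radial kernel in $L^1$ by a continuous one so that narrow convergence of the product measures $m_k\otimes m_k$ applies directly. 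The main advantage of your argument is that it uses only \eqref{assK} and never appeals to \eqref{pos}, so it matches the hypotheses as stated; it is also somewhat more elementary, relying only on the standard fact that narrow convergence is stable under tensor products. The paper's argument, on the other hand, avoids the $L^1$ approximation of the kernel and the product-measure step by exploiting the Hilbert structure induced by $K$, which is natural in the Riesz setting but is an extra assumption here.
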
 

%
%
%

\begin{proof} Once $\lim_k \mathcal{I}(m_k)=\mathcal{I}(m)$ is established, the lower semicontinuity \eqref{lsc} follows by applying Lemma \ref{lemmaconv} (and i) $\Rightarrow$ ii) of Lemma \ref{equiconv}) to the term $\int W(x)m_k(dx)$. 
We sketch briefly  the proof of $\lim_k \mathcal{I}(m_k)=\mathcal{I}(m)$. Similar arguments have been used in \cite[Lemma 3.3]{cho}.

By Cauchy-Schwartz inequality, recalling the positive definiteness of $K$,  we get (note that we are identifying  $m_k, m$ with their densities) 
\[ \left|\int_{\R^d}\int_{\R^d}  m_k(x) m(y)K(|x-y|)dxdy\right|\leq \mathcal{I}(m_k)^{1/2}\mathcal{I}(m)^{1/2}\] from which we deduce 
\[ \int_{\R^d}\int_{\R^d} (m_k(x)-m(x))(m_k(y)-m(y))K(|x-y|)dxdy \geq 
 \left((\mathcal{I}(m_k)^{1/2}-(\mathcal{I}(m)^{1/2}\right)^2.\]
We fix $R>0$ and we write, recalling the conditions on $K$, 
\begin{multline}\label{uno} 
\int_{\R^d}\int_{\R^d} (m_k(x)-m(x))(m_k(y)-m(y))K(|x-y|)dxdy\\
\leq  \left|\int_{\R^d}\int_{\R^d} (m_k(x)-m(x))(m_k(y)-m(y))K(|x-y|)\chi_{|x-y|\leq R}dxdy\right|+4  K(R).
\end{multline} 
 We define
\[F_k(x)=\int_{\R^d} m_k(y)K(|x-y|)\chi_{|x-y|\leq R}dy,\quad \text{and}\quad F(x)=\int_{\R^d} m(y)K(|x-y|)\chi_{|x-y|\leq R}dy\] and we observe that $F_k, F\in L^1(\R^d)$.
Then 
\begin{multline}\label{due} \int_{\R^d}\int_{\R^d} (m_k(x)-m(x))(m_k(y)-m(y))K(|x-y|)\chi_{|x-y|\leq R}dxdy\\= -\int_{\R^d}\int_{\R^d} (m_k(x)-m(x)) F(x)dx +\int_{\R^d}\int_{\R^d} m_k(x)(F_k(x)- F(x))dx.  \end{multline} 
Observe that since $m_k\to m$ narrowly and $m_k, m\leq \rho$, then $m_k\to m$   weak* in $L^\infty$, due to density of continuous functions in $L^1$. 
Therefore  \begin{equation}\label{tre} \lim_k \int_{\R^d} (m_k(x)-m(x))F(x)dx=0.\end{equation} Moreover $\lim_k F_k(x)=F(x)$ for a.e. $x$ and
\[\|F_k\|_1= \|K(|x|)\chi_{|x|\leq R}\|_1\to \|F\|_1= \|K(|x|)\chi_{|x|\leq R}\|_1.\] 
Therefore by Fatou lemma, $F_k\to F$ in $L^1(\R^d)$, which implies, recalling that $m_k\leq\rho$ that 
\begin{equation}\label{quattro} \lim_k \int_{\R^d} (F_k(x)-F(x))m_k(x)dx=0.\end{equation}
So, using \eqref{tre}, \eqref{quattro} in \eqref{uno} and recalling that $K(R)\to 0$ as $R\to +\infty$, we get the conclusion. 
\end{proof} 
We observe  the following fact about uniformly integrability of narrowly convergent sequences of measures.
\begin{lemma}\label{remw}   Let $W$ is as in \eqref{assw} and  $\mu_k, \mu\in \mathcal{P}_2(\R^d)$ such that $\mu_k\to \mu$ narrowly. 
Then,  $\lim_k d_2(\mu_k,\mu)=0$ if and only if  $\lim_k\int_{\R^d} W(x) \mu_k(dx)= \int_{\R^d} W(x) \mu(dx)$. 
\end{lemma}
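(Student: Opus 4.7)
The plan is to reduce the statement to the characterization of $d_2$-convergence given in Lemma \ref{equiconv}, exploiting the quadratic two-sided bounds on $W$ in \eqref{assw} to transfer uniform integrability between $W$ and $|x|^2$.

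For the forward implication, I would assume $d_2(\mu_k, \mu) \to 0$ and invoke Lemma \ref{equiconv} to obtain uniform integrability of the second moments of $(\mu_k)$. The upper bound $0 \le W(x) \le C(|x|^2 + 1)$ from \eqref{assw} then yields that $W$ is uniformly integrable with respect to $(\mu_k)$, i.e.\ $\lim_{R\to\infty}\sup_k \int_{\{W \ge R\}} W \, d\mu_k = 0$. Combined with narrow convergence and the continuity of $W$, this gives the convergence $\int W \, d\mu_k \to \int W \, d\mu$ by a standard truncation argument: use the bounded continuous truncations $W_R := \min(W, R)$ to obtain $\int W_R \, d\mu_k \to \int W_R \, d\mu$, and then send $R \to \infty$ using the uniform integrability of $W$; the matching lower bound is Lemma \ref{lemmaconv}(1) applied to the non-negative lower semicontinuous $W$.

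For the converse, assume narrow convergence together with $\lim_k \int W \, d\mu_k = \int W \, d\mu < \infty$. Then Lemma \ref{lemmaconv}(2), applied with $g = W$ (which is continuous, non-negative, and $\mu_k$-integrable since $\mu_k \in \mathcal{P}_2(\R^d)$), yields that $W$ is uniformly integrable with respect to $(\mu_k)$. Now I would use the lower bound in \eqref{assw}, which gives $|x|^2 \le C(W(x) + C)$. Given $\varepsilon > 0$, pick $R_0$ large enough that $\{|x| \ge R_0\} \subseteq \{W(x) \ge S\}$ whenever $R \ge R_0$, where $S = C^{-1}R^2 - C \to \infty$, and so $\int_{\{|x|\ge R\}} |x|^2 \, d\mu_k \le C \int_{\{W \ge S\}} W \, d\mu_k + C^2 \mu_k(\{W \ge S\})$. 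Uniform integrability of $W$ controls both terms, so $(\mu_k)$ has uniformly integrable 2-moments, and Lemma \ref{equiconv} then gives $d_2(\mu_k, \mu) \to 0$.

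There is no substantial obstacle: the statement is a direct translation, via the quadratic bounds on $W$, between the characterization of $\cP_2$-convergence as narrow convergence with uniformly integrable second moments (Lemma \ref{equiconv}) and the criterion for uniform integrability under narrow convergence (Lemma \ref{lemmaconv}(2)). The only mildly delicate point is the forward direction, where one must be careful that $W$ itself is not bounded and so must be handled through bounded continuous truncations rather than applied directly as a test function under narrow convergence.
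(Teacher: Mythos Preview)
Your proposal is correct and follows essentially the same route as the paper: both arguments reduce the equivalence to the characterization of $d_2$-convergence via uniform integrability of second moments (Lemma \ref{equiconv}), and then use the two-sided quadratic bounds on $W$ from \eqref{assw} to pass back and forth between uniform integrability of $|x|^2$ and of $W$. Your use of Lemma \ref{lemmaconv}(2) for the converse and of the bounded continuous truncations $W_R = \min(W,R)$ for the forward direction is in fact slightly cleaner than the paper's more informal truncation via $\int_{\{W\le R\}} W\,d\mu_k$.
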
 
\begin{proof}  
We observe that, by Lemma \ref{equiconv}, $\lim_k d_2(\mu_k, \mu)=0$ is equivalent to 
the fact that $\mu_k$  has uniformly integrable  $2$-moments, that is 
\begin{equation}\label{int1} \lim_{R\to +\infty} \sup_{k}\int_{\R^d\setminus B(0, R)} |x|^2d\mu_k(x)=0.\end{equation} 
 
 Let $R>0$, sufficiently large, such that $RC-C^2>1$ and $RC^{-1}>3$, where $C$ is the constant appearing in \eqref{assw}.  We denote $A_R:=\{W(x)\geq R\}$. Then by \eqref{assw} we get $\R^d\setminus B(0, \sqrt{RC^{-1}+1})\subseteq A_R\subseteq \R^d\setminus B(0, \sqrt{RC-C^2})$. Then, recalling \eqref{assw}, we get 
\begin{multline*}\frac{C}{2} \sup_{k} \int_{\R^d\setminus B(0, \sqrt{RC^{-1}+1})} |x|^2d\mu_k(x)
\leq \sup_{k}\int_{A_R} W(x) d\mu_k(x) \\
\leq(C^{-1} +C)  \sup_{k} \int_{\R^d\setminus B(0, \sqrt{RC-C^2})}   |x|^2 d\mu_k(x). \end{multline*} Sending $R\to +\infty$,  we get that $\mu_k$  has uniformly integrable  $2$-moments, that is \eqref{int1} holds, 
  if and only if 
 $W$ is uniformly integrable with respect to $\mu_k$, that is 
\begin{equation}\label{int2}\lim_{R\to +\infty} \sup_{k}\int_{\{W(x)\geq R\}} W(x) d\mu_k(x)=0.\end{equation}
 
 Now observe that if $\mu_k\to \mu$ narrowly, then $\lim_k \int_{\{W(x)\leq R\}} W(x) d\mu_k(x)= \int_{W(x)\leq R} d\mu(x)$ for all $R>0$. 
Therefore if $\mu_k\to\mu$ narrowly, then  $\lim_k \int_{\R^d} W(x) d\mu_k(x)= \int_{\R^d} d\mu(x)$ if and only if \eqref{int2} holds. This gives the conclusion. 
\end{proof}

\subsection{Minimizers for the stationary problem: assumptions \eqref{z} and   \eqref{c} } \label{secsta} 
We start proving  existence and qualitative properties  of minimizers of $ \mathcal{W}(m)$ in the set $\cPr$.  In particular we will show that under assumptions   \eqref{assw}  and \eqref{assK}  the minimizers are characteristic functions. Moreover we show that  assumption \eqref{ass3} assures that the minimizers are characteristic functions of balls. 
Then, in Proposition \ref{classiminimi} we show that \eqref{z} is satisfied by $\cW_0$  and  in Proposition \ref{continu}, we prove that $\mathcal{W}_0$ satisfies assumption \eqref{c}. 

The first result is about qualitative properties of minimizers, by looking at the first and second variation of the functional.
\begin{proposition}\label{propminima} Assume \eqref{assw}  and  \eqref{assK}. Let $m\in \cPr$ be a minimizer of the functional $\mathcal{W}$. Then 
 there exists a  bounded, measurable set $E\subseteq \R^d$ such that $m=\rho\chi_E$. 
\end{proposition}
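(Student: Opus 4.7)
My plan is to prove the two claims separately: first that any minimizer has the form $m = \rho \chi_E$ (a consequence of strict concavity of $\cW$, coming from the positive definiteness of $K$), and then that $E$ is bounded (a consequence of the Euler-Lagrange inequalities, combined with the growth of $W$ and the decay of $V_m$ at infinity).

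For the first step, I identify $m$ with its density $u \in L^\infty(\R^d)$, $0 \le u \le \rho$, $\int u = 1$. Suppose by contradiction that the set $A = \{0 < u < \rho\}$ has positive Lebesgue measure. Then there exists $\delta>0$ such that $A_\delta = \{\delta \le u \le \rho - \delta\}$ has positive measure, so I can pick two disjoint measurable subsets $A_1, A_2 \subset A_\delta$ with $0 < |A_1| = |A_2| < \infty$. Setting $\phi = \chi_{A_1} - \chi_{A_2}$, I have $\int \phi \, dx = 0$ and $\phi \in L^\infty \cap L^1$, and for $\eps > 0$ small enough the perturbations $u \pm \eps \phi$ belong to $\cPr$. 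Expanding $\cW$ using bilinearity of the interaction term and the definition of $V_u$ in \eqref{pot},
\begin{equation*}
\cW(u + \eps \phi) + \cW(u - \eps \phi) = 2\cW(u) - 2\eps^2 \, \mathcal I(\phi).
\end{equation*}
The positive definiteness assumption \eqref{pos} gives $\mathcal I(\phi) > 0$ since $\phi \not\equiv 0$, hence at least one of $\cW(u \pm \eps \phi)$ is strictly less than $\cW(u)$, contradicting minimality. Therefore $|A| = 0$, so $u$ takes only the values $0$ and $\rho$, i.e.\ $u = \rho \chi_E$ for the measurable set $E = \{u = \rho\}$, which satisfies $|E| = 1/\rho$.

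For the second step, I derive first-order conditions. Let $\Phi(x) := W(x) - 2 V_m(x)$ with $V_m = K \ast u$. By Lemma \ref{lemmapot}, $V_m \in C(\R^d)$ and $V_m(x) \to 0$ as $|x| \to \infty$; by \eqref{assw}, $W$ is continuous with $W(x) \to \infty$ as $|x|\to\infty$. Hence $\Phi \in C(\R^d)$ and $\Phi(x) \to \infty$ as $|x|\to\infty$. A standard ``bathtub'' argument (swapping small pieces of mass between $E$ and $E^c$ while preserving the $L^\infty$ bound and total mass $1$) shows that there exists a Lagrange multiplier $\lambda \in \R$ with
\begin{equation*}
\Phi(x) \le \lambda \quad \text{for a.e. } x\in E, \qquad \Phi(x) \ge \lambda \quad \text{for a.e. } x\in E^c.
\end{equation*}
Indeed, if this failed, one could find small sets $F_1 \subset E$, $F_2 \subset E^c$ with $|F_1|=|F_2|>0$ on which $\Phi_{|F_1} > \Phi_{|F_2}$; the feasible competitor $u + \rho(\chi_{F_2}-\chi_{F_1})$ would strictly decrease $\cW$ to first order (the quadratic correction is controlled taking $|F_1|$ small), contradicting minimality. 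Since $\Phi \to \infty$ at infinity, the sublevel set $\{\Phi \le \lambda\}$ is bounded, and by continuity of $\Phi$, $E$ is contained (up to a null set) in this compact set. So $E$ can be taken bounded.

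The most delicate point, in my view, is the bathtub-type derivation of the Euler-Lagrange conditions: one must justify that arbitrary pointwise inequalities can be tested through admissible (mass-preserving, $L^\infty$-bounded) perturbations, and that the second-order term in $\cW$ along such perturbations is negligible compared to the linear one when the support of the perturbation is small in measure. This uses the continuity of $\Phi$ together with the fact that $\mathcal I$ is bounded by $\|K\|_{L^1_{\mathrm{loc}}}$ on compactly supported $L^\infty$ perturbations. The first step, instead, is essentially a clean application of the strict concavity of $\cW$, analogous to the second-variation argument sketched for the maximization problem in Lemma \ref{cKbounds}.
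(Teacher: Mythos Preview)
Your overall strategy is sound and both steps are essentially correct arguments, but there is one genuine mismatch with the stated hypotheses. In Step~1 you invoke the strict positive definiteness assumption \eqref{pos} to conclude $\mathcal I(\phi)>0$. However, the proposition as stated assumes only \eqref{assw} and \eqref{assK}; \eqref{pos} is a separate standing assumption that is \emph{not} listed here. The paper's proof is deliberately written to avoid \eqref{pos}: instead of global concavity, it computes the second variation to obtain the inequality
\[
\int_{\R^d\setminus(N\cup S)}\int_{\R^d\setminus(N\cup S)} (\psi-\phi)(x)(\psi-\phi)(y)K(|x-y|)\,dxdy \le 0,
\]
and then violates it using only the monotonicity of $K$ together with the condition $\lim_{r\to 0}K(r)-K(t+r)>0$ from \eqref{assK}. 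Concretely, if two Lebesgue points $x,y$ with $0<m(x),m(y)<\rho$ exist, one tests with $\psi=\chi_{A_\eps(x)}$, $\phi=\chi_{A_\eps(y)}$ on small neighborhoods and obtains a strictly positive lower bound $2|A_\eps(x)|^2\bigl(K(2\eps)-K(d/2)\bigr)>0$, a contradiction. So the paper's argument buys a weaker hypothesis (no positive definiteness needed); your argument buys simplicity and transparency (a one-line concavity identity), but at the price of importing \eqref{pos}.

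Your Step~2 is essentially identical to the paper's: both derive the Euler--Lagrange inequalities $\Phi\le c$ on $\{m=\rho\}$ and $\Phi\ge c$ on $\{m=0\}$ via mass-exchange perturbations, and both conclude boundedness from $W\to\infty$ and $V_m\to 0$ (Lemma~\ref{lemmapot}). Your remark that the quadratic remainder in $\cW$ is negligible for perturbations of small measure is exactly what the paper handles by passing to the limit $\eps\to 0$ (and then $\lambda\to 0^+$) in its approximating perturbations $\psi_\eps,\phi_\eps$.
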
 
\begin{proof} The proof is based on analogous arguments as in \cite[Prop. 5.4, Thm 5.7]{cnov}.
Let  $m\in \cPr$ be a  minimizer of the functional $\mathcal{W}$, and let define the sets 
\[S=\{x\ | m(x)=\rho\},\qquad N=\{x\ | \ m(x)=0\},\] as subsets of the set of density points of $m$. 

We  start showing that $m$ has bounded support.
 We compute the first variation of the functional $\mathcal{W}$  as in \cite[Lemma 5.3]{cn18}.  
Pick   any $\psi, \phi\in L^1(\R^d, [0, \rho])$ such that $\int_{\R^d} \phi(x)dx=\int_{\R^d}\psi(x)dx$,
$\psi=0$  a.e. in $S$ and $\phi=0$ a.e. in $N$. Then  $\int_{\R^d} m+\lambda(\psi-\phi)dx=1$ for every $\lambda$ and moreover for $\lambda\in (0,1)$ we get that
 $0\leq m(x)+\lambda(\psi(x)-\phi(x))\leq \rho$ for a.e. $x\in N\cup S$. 
 
 Now we consider two sequences $\psi_\eps\to \psi$, $\phi_\eps\to \phi$ in $L^1$ such that $\int_{\R^d} \phi_\eps(x)dx=\int_{\R^d}\psi_\eps(x)dx=\int_{\R^d} \phi(x)dx$ and such that $\phi_\eps=0$ a.e. on the set $\{m(x)\leq \eps\}$ and $\psi_\eps=0$ a.e. on the set $\{m(x)\geq1-\eps\}$. Choosing $\lambda$ sufficiently small (depending on $\eps$) we get that $m+\lambda(\psi_\eps-\phi_\eps)\in \cPr$. 
Using the fact that $\mathcal{W}(m)\leq \mathcal{W}(m+\lambda(\psi_\eps-\phi_\eps))$, we get, sending $\lambda\to 0^+$, 
\[\int_{\R^d} (-2V_m(x)+W(x))(\psi_\eps(x)-\phi_\eps(x))dx\geq 0
\]  where $V_m$ is the potential of $m$ defined in \eqref{pot}. Sending $\eps\to 0$ we conclude that 
\begin{equation}\label{e1} \int_{\R^d} (-2V_m(x)+W(x))(\psi(x)-\phi(x))dx\geq 0.
\end{equation} 
If we choose $\psi,\phi$ such that $\psi=\phi=0$ in $N\cup S$, then we can exchange the role of $\phi, \psi$ in \eqref{e1} and obtain 
\[\int_{\R^d\setminus (S\cup N)} (-2V_m(x)+W(x))(\psi(x)-\phi(x))dx=0\] for all $ \phi-\psi\in L^1(\R^d)$,  such that $\phi-\psi=0$ in $S\cup N$, with  $\int_{\R^d}(\phi-\psi) dx=0$.
This implies by the fundamental lemma of calculus of variations,  that there exists a constant $c$ such that
\begin{equation}\label{e3} -2V_m(x)+W(x)=c\qquad x\in \R^d\setminus (N\cup S).\end{equation}
Using this fact in \eqref{e1} we get, taking $\psi=0$ in $S\cup N$, and observing that $\int_{\R^d\setminus (S\cup N)} \psi dx=\int_{\R^d} \phi dx$, 
\[0\leq  \int_{S} (2V_m(x)-W(x)) \phi(x)dx +c\int_{\R^d\setminus S\cup N} (\psi-\phi )dx=\int_{S} (2V_m(x)-W(x)+c) \phi(x)dx  \]
an analogously, taking $\phi=0$ in $S\cup N$,
\[0\leq  \int_{N} (-2V_m(x)+W(x)) \psi(x)dx +c\int_{\R^d\setminus S\cup N} (\psi-\phi )dx=\int_{N} (-2V_m(x)+W(x)-c) \psi dx. \]
This implies that 
\begin{equation}\label{e4} \begin{cases}-2V_m(x)+W(x)\geq c& x\in N\\ -2V_m(x)+W(x)\leq c& x\in S,\end{cases}
\end{equation} Recalling that $W$ is coercive (see assumption \eqref{assw}) and $V_m$ vanishes at infinity, see Lemma \ref{lemmapot}, we conclude  from \eqref{e3}, \eqref{e4} that both  $S$ and $\R^d\setminus (N\cup S)$ have to be bounded.  This implies that the support of $m$, which is given by $\R^d\setminus N $,  is bounded. 

Now we show that $m(x)\in \{0, \rho\}$ for a.e. $x$.  We compute the second variation of the functional as in \cite[Lemma 5.5]{cnov}.  We consider $\phi, \psi$ as above,  and we assume that $\psi=\phi=0$ in $N\cup S$. Now  we consider two sequences $\psi_\eps\to \psi$, $\phi_\eps\to \phi$ in $L^1$ such that $\int_{\R^d} \phi_\eps(x)dx=\int_{\R^d}\psi_\eps(x)dx=\int_{\R^d} \phi(x)dx$ and such that $\phi_\eps=0$ and $\psi_\eps=0$ a.e. on the set $\{m(x)\leq \eps\}\cup\{m(x)\geq1-\eps\}$.  Now, for $\lambda$ sufficiently small, depending on $\eps$, $m+\lambda (\psi_\eps-\phi_\eps)\in \cPr$. By minimality of $m$, recalling that $W(x)-2V_m(x)$ is constant on $\R^d\setminus (N\cup S)$, that  $\psi_\eps-\phi_\eps=0$ in $N\cup S$ and that  $\int_{\R^d} (\psi_\eps(x)-\phi_\eps(x))dx=0$, we get 
\[  - \lambda^2\int_{\R^d}K(|x-y|)(\psi_\eps(x)-\phi_\eps(x))(\psi_\eps(y)-\phi_\eps(y))dxdy\geq 0.\]
Sending $\eps\to 0$ we get 
\begin{equation}\label{e5} \int_{\R^d\setminus (N\cup S)}\int_{\R^d\setminus (N\cup S)} (\psi(x)-\phi(x))(\psi(y)-\phi(y))K(|x-y|)dxdy\leq 0.\end{equation}
Assume now by contradiction that there are two Lebesgue  points $x,y$ of $m$ such that $0<m(x), m(y)<\rho$.  Let $d=|x-y|$.  
Since $x,y$ are density points for $m$, it is possible to find, for   $0< \eps<d/4$ sufficiently small,   two sets with positive measure $A_\eps(x)$, $A_\eps(y )$ such that 
\begin{itemize}\item  $A_\eps(x)\subseteq  B(x,\eps)\cap \R^d\setminus (N\cup S)$, \item  $A_\eps(y)\subseteq  B(y,\eps)\cap \R^d\setminus (N\cup S)$, \item  $d(A_\eps(x), A_\eps(y))\geq d/2$,  \item  $|A_\eps(x)|=|A_\eps(y)|>0$.  \end{itemize} We recall that $ B(x,\eps)$ is the ball centered at $x$ of radius $\eps$, and analogously $B(y,\eps)$. 
Observe that  if  either $t,z \in A_\eps(x )$ or  $t,z \in A_\eps(y)\subseteq B(y,\eps)$, then $|t-z|\leq 2\eps$, and if $t\in A_\eps(x)$, $z\in A_\eps(y)$,  then $|t-z|\geq d(A_\eps(x), A_\eps(y))\geq d/2$. 

 We choose  $\psi=\chi_{A_\eps(x)}$ and $\phi=\chi_{A_\eps(y)}$, and substituting   in \eqref{e5} we find, recalling that $K$ is decreasing, that 
\begin{align*} 
0&\leq \int_{\R^d}  \int_{\R^d }  (\psi(x)-\phi(x))(\psi(y)-\phi(y))K(|x-y|)dxdy \\ &=\int_{A_\eps(x)}\int_{A_\eps(x)}K(|t-z|)dtdz+\int_{A_\eps(y)}\int_{A_\eps(y)}K(|t-z|)dtdz-2 \int_{A_\eps(x)}\int_{A_\eps(y)}K(|t-z|)dtdz\\ & \geq K(2\eps)|A_\eps(x)|^2+K(2\eps)|A_\eps(y)|^2-2 K(d/2) |A_\eps(x)||A_\eps(y)|\\ & =2 |A_\eps(x)|^2(K(2\eps)- K(d/2))>0, \end{align*} 
which gives a contradiction.
\end{proof} 

We provide now the existence and characterization of minimizers to $\cW$.  
\begin{theorem}\label{thmminima} Under the   assumptions \eqref{assK},  \eqref{assw},  the problem
\begin{equation}\label{minsta} \min_{m\in \cPr} \mathcal{W}(m)
\end{equation} admits at least one solution. Each solution  is given by $\rho\chi_E$ for some measurable set $E$ such that $|E|=\rho^{-1}$.  Moreover if $\rho\chi_E$ is a minimizer then also
$\rho\chi_{\gamma E}$ is a minimizer. 

\end{theorem}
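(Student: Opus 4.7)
The plan is a standard direct method for existence, combined with the structural information already supplied by Proposition \ref{propminima} and a symmetry argument.

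For existence, I would take a minimizing sequence $(m_k)\subset\cPr$ with $\cW(m_k)\to \inf_\cPr\cW$. By Lemma \ref{cKbounds} one has $\mathcal{I}(m_k)\le \mathcal{I}(\rho\chi_{B_{r_\rho}})$, so the upper bound on $\cW(m_k)$ forces $\int_{\R^d} W\,dm_k\le C'$ for some $C'>0$. The quadratic growth condition in \eqref{assw} then yields $\sup_k\int_{\R^d}|x|^2\,dm_k<\infty$, which gives tightness via Markov's inequality; extracting a subsequence I obtain $m_k\to\bar m$ narrowly. The uniform bound $m_k\le\rho$ passes to $\bar m\le\rho$ by weak-$\ast$ compactness in $L^\infty$ (the narrow and weak-$\ast$ limits agree against any $\phi\in C_c(\R^d)$, which is dense in $L^1$), hence $\bar m\in\cPr$.

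For lower semi-continuity, Lemma \ref{lemmaconv}(i) applied to the nonnegative continuous function $W$ yields $\liminf_k\int W\,dm_k\ge\int W\,d\bar m$, while Proposition \ref{propcontK} gives the continuity $\mathcal{I}(m_k)\to\mathcal{I}(\bar m)$. Adding the two gives $\liminf_k\cW(m_k)\ge\cW(\bar m)$, so $\bar m$ is a minimizer of \eqref{minsta}.

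For the structure of minimizers, Proposition \ref{propminima} (which has already been established above through a first- and second-variation analysis) states that any minimizer $m\in\cPr$ of $\cW$ is of the form $m=\rho\chi_E$ for some bounded measurable set $E\subset\R^d$. The mass constraint $\int_{\R^d}m\,dx=1$ then forces $|E|=\rho^{-1}$.

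Finally, for the symmetry, assumption \eqref{ref} yields $W(\gamma x)=W(x)$, and since $\gamma$ is a reflection (an isometry preserving Lebesgue measure) the radiality of $K$ gives $K(|\gamma x-\gamma y|)=K(|x-y|)$. The change-of-variables computation already performed in the verification of \eqref{rif} therefore shows $\cW(\gamma_\#m)=\cW(m)$ for every $m\in\cPr$. Moreover, a direct computation using $\gamma^{-1}=\gamma$ and volume preservation gives $\gamma_\#(\rho\chi_E)=\rho\chi_{\gamma(E)}$; hence if $\rho\chi_E$ is a minimizer, so is $\rho\chi_{\gamma(E)}$. The only nontrivial ingredient is the ``bang-bang'' structure supplied by Proposition \ref{propminima}; the remaining pieces (compactness, semi-continuity, symmetry) are essentially direct applications of the preliminary lemmas collected in this section.
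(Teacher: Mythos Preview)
Your proof is correct and follows essentially the same route as the paper: direct method via a uniform second-moment bound on the minimizing sequence, narrow plus weak-$*$ $L^\infty$ compactness, lower semicontinuity from Lemma~\ref{lemmaconv} and Proposition~\ref{propcontK}, then the bang-bang structure from Proposition~\ref{propminima} and the symmetry from \eqref{ref}. The only minor point you leave implicit is that $\bar m\in\cP_2(\R^d)$ (needed for $\bar m\in\cPr$), but this follows immediately from $\liminf_k\int|x|^2\,dm_k\ge\int|x|^2\,d\bar m$ via Lemma~\ref{lemmaconv}(i).
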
 
\begin{proof} The result is an application of the direct method in calculus of variations. By Remark \ref{remb}, $\inf_{\cPr}\cW\geq -\mathcal{I}(\rho\chi_{B_{r_\rho}})$. 
Let  $m_n$ be a minimizing sequence. By \eqref{b} and   Lemma \ref{equiconv},, up to a subsequence, there exists $m$ such that $m\in \mathcal{P}_p(\R^d)$ for every $p<2$ such that  $m_n$ converges to $m$ narrowly and also weak-* in $L^\infty$. Again by the growth condition \eqref{b} and the lower semicontinuity property \eqref{lsc},  $m\in \mathcal{P}_2(\R^d)$.
Moreover  $\|m\|_\infty\leq \liminf_n \|m_n\|_\infty\leq \rho$, and so $m\in\cPr$ and, again by Proposition \ref{propcontK},
$\liminf_n \mathcal{W}(m_n)\geq \mathcal{W}(m)$, which implies that $m$ is a minimizer. 
Finally by Proposition \ref{propminima}, $m=\rho\chi_E$ for some bounded measurable set $E$. 
The fact that $\rho\chi_{\gamma E}$ is still a minimizers comes from \eqref{ref}. 
\end{proof}
\begin{proposition}\label{classiminimi}
Assume \eqref{assw}, \eqref{ref} and  \eqref{ass3}.  Then  $\min_{ \cPr } \cW=-\mathcal{I}(\rho\chi_{B_{r_\rho}})$ for $r_\rho=(\omega_d \rho)^{-1/d}$,  and   all the minimizers of \eqref{sta} are given by $\mathcal{M}^+\cup\mathcal{M}^-$, where $\mathcal{M}^-=\gamma\mathcal{M}^+$ and 
\[\mathcal{M}^+=\{\rho \chi_{E},  \text{ where } E= B(x', (\omega_d \rho)^{-1/d})\subseteq B(a^+,\tilde r) \text{ for some $x'\in\R^d$}\}. \]
If $\rho=\tilde \rho \,\,\big(= (\omega_d \tilde r^d)^{-1}\big)$, then $\mathcal{M}^+=\{\rho\chi_{B(a^+, \tilde r)}\}$. 

Moreover, $\mathcal{M}^+$ and $ \mathcal{M}^-$ are compact subsets of $\mathcal{P}_2(\R^d)$ and $d_2(\mathcal{M^+}, \mathcal{M^-}) > 0$. 
\end{proposition}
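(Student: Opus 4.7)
The plan is to combine the decomposition $\cW(m) = \int W\,dm - \mathcal I(m)$ with the sharp bounds on each piece separately, and to use the size assumption \eqref{ass3} to show that both extremal bounds can be saturated simultaneously.

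\emph{Identifying the minimum.} Since $W\ge 0$ and $\mathcal I(m)\le \mathcal I(\rho\chi_{B_{r_\rho}})$ by Lemma \ref{cKbounds}, every $m\in\cPr$ satisfies $\cW(m)\ge -\mathcal I(\rho\chi_{B_{r_\rho}})$. Assumption \eqref{ass3} is equivalent to $r_\rho\le \tilde r$, so for any $x'\in\R^d$ with $|x'-a^+|\le \tilde r - r_\rho$ the ball $B(x',r_\rho)$ lies entirely inside the flat zone $B(a^+,\tilde r)$. Taking $m=\rho\chi_{B(x',r_\rho)}$ gives $\int W\,dm=0$, and $\mathcal I(m)=\mathcal I(\rho\chi_{B_{r_\rho}})$ by the translation invariance of $\mathcal I$ (which follows from $K$ depending only on $|x-y|$). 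Hence $\cW(m)=-\mathcal I(\rho\chi_{B_{r_\rho}})$, and by \eqref{ref} the analogous statement holds on the reflected side around $a^-$.

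\emph{Characterizing the minimizers.} Let now $m$ be any minimizer. By Theorem \ref{thmminima}, $m=\rho\chi_E$ with $|E|=\omega_d r_\rho^d$. Equality in the two bounds above forces simultaneously $\int_E W=0$, whence $E\subseteq B(a^+,\tilde r)\cup B(a^-,\tilde r)$ up to null sets, and $\mathcal I(\rho\chi_E)=\mathcal I(\rho\chi_{B_{r_\rho}})$. The latter identity, combined with the characterization of maximizers of $\mathcal I$ on $\cPr$ obtained from the strict Riesz rearrangement inequality (whose strict-decrease hypothesis is built into the third bullet of \eqref{assK}), forces $E$ to be a translate of its symmetric decreasing rearrangement, i.e.\ $E=B(x',r_\rho)$ for some $x'\in\R^d$. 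Since $B(x',r_\rho)$ is connected and $B(a^+,\tilde r)$, $B(a^-,\tilde r)$ are disjoint, $E$ lies entirely in one of the two, so $m\in\mathcal M^+\cup \mathcal M^-$.

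\emph{Properties of $\mathcal M^\pm$.} If $\rho=\tilde\rho$ then $r_\rho=\tilde r$, and the only ball of radius $\tilde r$ contained in $B(a^+,\tilde r)$ is $B(a^+,\tilde r)$ itself; hence $\mathcal M^+=\{\rho\chi_{B(a^+,\tilde r)}\}$. In general, $\mathcal M^+$ is the image of the compact set $\overline{B(a^+,\tilde r-r_\rho)}$ under $x'\mapsto \rho\chi_{B(x',r_\rho)}$, and this map is $1$-Lipschitz from $\R^d$ into $(\cP_2(\R^d),d_2)$ because the translation $y\mapsto y+(x''-x')$ provides a transport plan of cost $|x''-x'|^2$. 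Therefore $\mathcal M^+$ is compact, and so is $\mathcal M^-=\gamma_\#\mathcal M^+$. Since elements of $\mathcal M^\pm$ are supported in the disjoint balls $B(a^\pm,\tilde r)$, the two sets are disjoint, and disjoint compact subsets of the metric space $(\cP_2(\R^d),d_2)$ are always a positive distance apart.

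The main obstacle is the uniqueness up to translation of the maximizer of $\mathcal I$ on $\cPr$: Lemma \ref{cKbounds} exhibits $\rho\chi_{B_{r_\rho}}$ as \emph{one} maximizer, whereas the argument above needs \emph{every} maximizer to be a translated ball $\rho\chi_{B(x',r_\rho)}$. This requires the sharp form of the Riesz rearrangement inequality; in the present setting, it can be deduced from the mild strict-decrease condition on $K$ in \eqref{assK} together with a second-variation argument analogous to the one in Proposition \ref{propminima} (first ruling out intermediate densities, then appealing to the classical strict Riesz inequality for characteristic functions).
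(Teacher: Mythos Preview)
Your argument is correct and follows the same line as the paper's: split $\cW$ into the $W$-part and the $\mathcal I$-part, bound each separately, and use \eqref{ass3} to fit a ball of radius $r_\rho$ inside a plateau so that both bounds are saturated simultaneously. The concern you raise in your final paragraph about uniqueness (up to translation) of the Riesz maximizer is exactly what the paper sweeps into Lemma~\ref{cKbounds}, whose proof already asserts that maximizers of $\mathcal I$ in $\cPr$ are radially symmetric characteristic functions (hence translated balls); so your flagging of this point is honest but not a gap relative to the paper. For the topological conclusions, the paper gives an explicit quantitative lower bound on $d_2(\mathcal M^+,\mathcal M^-)$ by estimating any transport plan between the disjoint supports $B(a^\pm,\tilde r)$, whereas you argue more abstractly via ``Lipschitz image of a compact parameter set'' for compactness and ``disjoint compacta in a metric space have positive distance'' for the separation; both routes are valid, and yours is arguably cleaner.
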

\begin{proof}
Let $\rho\chi_E$ be a minimizer. We get, recalling Remark \ref{remb}, 
\[ -\mathcal{I}(\rho\chi_{B(0,r_\rho)}) \leq \rho \int_{E} W(x)dx -\mathcal{I}(\rho\chi_E) \leq \rho \int_{B(a^+, r_\rho)} W(x)dx-\mathcal{I}(\rho\chi_{B(0, r_\rho)}). \] 
 Note that under assumption \eqref{ass3}, $\omega_d r_\rho^d\leq \omega_d\tilde r^d=|B(a^+,\tilde r)|$, then $\cW(\rho\chi_{B(a^\pm, r_\rho)})= -\mathcal{I}(\rho\chi_{B(0,r_\rho)})$,  and so $\rho \chi_{B(a^\pm, r_\rho)}$ are minimizers. Moreover, due to Lemma \ref{cKbounds},   $\int_E W(x)dx=0$  for every $E$ such that $\rho \chi_E$ is a minimizer.  If $r_\rho<\tilde r$, there are infinitely many minimizers, which are given by all possible balls $B(x', r_\rho)\subseteq B(a^\pm, \tilde r)$, whereas if $r_\rho=\tilde r$, the only minimizers are $B(a^\pm, \tilde r)$. 
 
 The compactness of $\mathcal{M}^\pm$ is straightforward. To evaluate $d_2(\mathcal{M^+}, \mathcal{M^-})$, we just notice that for any $\bar m^+\in \mathcal{M}^+$ and $ \bar m^-\in \mathcal{M}^-$, $\bar m^\pm = \rho \chi_{E^\pm}$, where $E^+= B(x',  r_\rho)\subseteq B(a^+,r)$ and $E^-= B(y',  r_\rho)\subseteq B(a^-,\tilde r)$. Therefore, for all $\pi \in \Pi(\bar m^+ dx, \bar m^- dx)$,
 \[
 \int_{\R^d} \int_{\R^d} |x-y|^2 d\pi(x,y) = \int_{B(a^+,\tilde r)} \int_{B(a^-,\tilde r)} |x-y|^2 d\pi(x,y) \ge [2(|a^+| - \tilde r)]^2 > 0,
 \]
 since $B(a^\pm, \tilde r)$ are symmetric and disjoint. Hence,
 \[
 d_2(\mathcal{M^+}, \mathcal{M^-}) \ge d_2(\bar m^+,\bar m^-) \ge [2(|a^+| - \tilde r)]^2 > 0.
 \]
%
%
%
\end{proof} 

Therefore,
\[
\cW_0(m) = \cW(m) + \mathcal{I}(\rho\chi_{B_{r_\rho}}).
\]
It remains to prove the property \eqref{c} for $\cW_0$.
\begin{proposition}\label{continu}  Assume that $m_k\in \cPr$ is such that $\lim_k \cW(m_k)=-\mathcal{I}(\rho\chi_{B_{r_\rho}})$, with the notation of Lemma \ref{cKbounds}. Then  up to passing to a subsequence there exists $m\in \cPr$ such that $\cW(m)= -\mathcal{I}(\rho\chi_{B_{r_\rho}})$ and $\lim_k d_2(m_k, m)=0$. 
\end{proposition}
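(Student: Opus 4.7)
The plan is to combine the direct method with the continuity/lower-semicontinuity properties of $\mathcal{I}$ and of $\int W\,dm$ already established, and then upgrade narrow / $d_p$ convergence to $d_2$ convergence by exploiting the quadratic coercivity of $W$.

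First I would extract a candidate limit. Since $\cW(m_k)$ converges it is bounded, so the growth estimate \eqref{b} (which holds for $\cW$ by \eqref{assw} and Remark \ref{remb}) yields $\sup_k \int_{\R^d} |x|^2\, m_k(dx) < \infty$. By Remark \ref{remequiconv} and Lemma \ref{equiconv}, $(m_k)$ is relatively compact in $\cP_p(\R^d)$ for every $p<2$, so up to subsequence $m_k$ converges narrowly to some $m \in \cP_2(\R^d)$ with $d_p(m_k,m) \to 0$ for all $p<2$. The uniform bound $m_k \le \rho$ lets me extract further so that $m_k \to m$ weak-$*$ in $L^\infty$; hence $m \le \rho$ a.e.\ and $m \in \cPr$.

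Next I would identify $m$ as a minimizer. Proposition \ref{propcontK} gives $\mathcal{I}(m_k) \to \mathcal{I}(m)$, and Lemma \ref{lemmaconv} applied to the non-negative lower semicontinuous $W$ gives $\int_{\R^d} W\,dm \le \liminf_k \int_{\R^d} W\,dm_k$. Combining these,
\[
\cW(m) \;\le\; \liminf_k \cW(m_k) \;=\; -\mathcal{I}(\rho\chi_{B_{r_\rho}}) \;=\; \min_{\cPr} \cW,
\]
by Proposition \ref{classiminimi}, so $\cW(m) = -\mathcal{I}(\rho\chi_{B_{r_\rho}})$.

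The crux is promoting $d_p$-convergence to $d_2$-convergence. Here I would use a subtraction trick: from $\cW(m_k) \to \cW(m)$ and $\mathcal{I}(m_k) \to \mathcal{I}(m)$ one gets
\[
\int_{\R^d} W(x)\, m_k(dx) \;=\; \cW(m_k) + \mathcal{I}(m_k) \;\longrightarrow\; \cW(m) + \mathcal{I}(m) \;=\; \int_{\R^d} W(x)\, m(dx).
\]
Since $W$ satisfies the quadratic coercivity $C^{-1}|x|^2 - C \le W(x) \le C|x|^2+C$ from \eqref{assw} and $m_k \to m$ narrowly, Lemma \ref{remw} then yields $d_2(m_k, m) \to 0$.

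The main obstacle is precisely the passage from $d_p$ (with $p<2$) to $d_2$: a uniform $|x|^2$-moment bound and narrow convergence alone do not rule out a vanishing fraction of mass escaping to infinity in the second-moment sense. The mechanism that excludes this here is purely energetic, namely the continuity of $\mathcal{I}$ along narrow convergence (Proposition \ref{propcontK}) which transfers convergence of the full energy to convergence of $\int W\,dm_k$, combined with the coercivity of $W$ via Lemma \ref{remw}. Everything else (existence of a narrowly convergent subsequence, minimality of the limit, membership in $\cPr$) is routine application of results already established in the paper.
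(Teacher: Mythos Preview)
Your proof is correct and follows essentially the same route as the paper's: extract a subsequential narrow/$d_p$ limit via the second-moment bound from \eqref{b}, use continuity of $\mathcal{I}$ (Proposition \ref{propcontK}) together with lower semicontinuity of $\int W\,dm$ to identify the limit as a minimizer, then subtract to get $\int W\,dm_k \to \int W\,dm$ and invoke Lemma \ref{remw} to upgrade to $d_2$-convergence. The only cosmetic difference is that the paper appeals to Remark \ref{remb} (which needs no extra hypothesis) rather than Proposition \ref{classiminimi} to conclude $\cW(m) = -\mathcal{I}(\rho\chi_{B_{r_\rho}})$; your argument is otherwise slightly more explicit, in particular in checking $m \in \cPr$ via weak-$*$ $L^\infty$ convergence.
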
 
\begin{proof} By the growth condition \eqref{b}, we get that $m_k$ are uniformly bounded in $\cP_2(\R^d)$. So, by Lemma \ref{lemmaconv}, up to passing to a subsequence we get that there exists $m\in \cP_p(\R^d)$ such that 
$m_k\to m$ in $\cP_p(\R^d)$ for all $p<2$, and a posteriori by the growth condition \eqref{assw},  and the lower semicontinuity in Proposition \ref{propcontK},   $m\in \cP_2(\R^d)$.  Again by the lower semicontinuity in   Proposition \ref{propcontK} we have that  $\cW(m)\leq -\mathcal{I}(\rho\chi_{B_{r_\rho}})$, and so, recalling Remark \ref{remb}, $\cW(m)=-\mathcal{I}(\rho\chi_{B_{r_\rho}})$. Since, by Proposition \ref{propcontK},  $\lim_k\mathcal{I}(m_k)=\mathcal{I}(m)$ this implies in particular that $\lim_n \int_{\R^d} W(x)m_k(dx)=\int_{\R^d}W(x)m(dx)$, so by Lemma \ref{remw}, $\lim_k d_2(m_k, m)=0$. \end{proof} 

The proof of Theorem \ref{teointro1} then follows by Theorem \ref{thmminima} and Proposition \ref{classiminimi}. Moreover, we obtain the following conclusion:
\begin{corollary}
Under the assumptions \eqref{assw},  \eqref{ref}, \eqref{ass3}, \eqref{assK}, \eqref{pos}, $\cW_0$ satisfies \eqref{z}, \eqref{lsc},  \eqref{b}, \eqref{c} and \eqref{rif}.
\end{corollary}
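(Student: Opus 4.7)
The statement is essentially a bookkeeping consolidation of the propositions already established in Section \ref{secmodel}, so the plan is mostly to identify which earlier result supplies each assumption. The key observation up front is that $\cW_0$ and $\cW$ differ only by the additive constant $\mathcal{I}(\rho\chi_{B_{r_\rho}})$, so any property that is invariant under a constant shift (lower semicontinuity, narrow continuity on minimizer-approaching sequences, reflection invariance, quadratic growth) can be transferred from $\cW$ to $\cW_0$ verbatim, and the unique effect of the shift is to normalize $\min_{\cPr}\cW_0=0$.

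With that in mind, I would proceed assumption by assumption. For \eqref{rif} and \eqref{b}, I would cite the proposition immediately preceding subsection on lower semicontinuity: it proves $\cW(\gamma_\# m)=\cW(m)$ from \eqref{ref} plus radial symmetry of $K$, and the two-sided $|x|^2$-bound from \eqref{assw} combined with Lemma \ref{cKbounds}. Both properties pass to $\cW_0$ trivially (the reflection invariance passes because a constant is $\gamma$-invariant, and the growth passes by absorbing the constant into $C_\cW$). For \eqref{lsc}, I would invoke Proposition \ref{propcontK}, which gives full narrow continuity of $\cW$ (and hence of $\cW_0$) on $\cPr$; narrow continuity is stronger than the $d_p$-lower semicontinuity required in \eqref{lsc}, because $d_p$-convergence for $p<2$ implies narrow convergence.

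For \eqref{z}, I would use Theorem \ref{thmminima} together with Proposition \ref{classiminimi}: existence of minimizers was shown by the direct method, and under the enlarged-plateau assumption \eqref{ass3} the minimizer set has the explicit description $\mathcal{M}^+\cup\mathcal{M}^-$ with $\mathcal{M}^-=\gamma_\#\mathcal{M}^+$, both compact in $\mathcal{P}_2(\R^d)$, and with $d_2(\mathcal{M}^+,\mathcal{M}^-)\ge 2(|a^+|-\tilde r)>0$, which is precisely $2q_0>0$. Since $\min_{\cPr}\cW_0=0$ by construction, $\cW_0(m)=0$ iff $m\in\mathcal{M}^+\cup\mathcal{M}^-$. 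For \eqref{c}, the content is Proposition \ref{continu}: if $\cW_0(m_k)\to 0$, equivalently $\cW(m_k)\to -\mathcal{I}(\rho\chi_{B_{r_\rho}})$, then a subsequence converges in $d_2$ to a minimizer $m\in\mathcal{M}^+\cup\mathcal{M}^-$, and since the same argument applies to every subsequence, $d_2(m_k,\mathcal{M}^\pm)\to 0$.

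The only mildly non-routine point is verifying that the $d_2$-convergence in Proposition \ref{continu} upgrades to the statement $d_2(m_k,\mathcal{M}^\pm)\to 0$ for the entire sequence, not merely along a subsequence. I would handle this by the standard ``subsequence of a subsequence'' argument: if $d_2(m_k,\mathcal{M}^\pm)\not\to 0$, there would be a subsequence bounded away from $\mathcal{M}^\pm$, but Proposition \ref{continu} extracts from it a further subsequence converging in $d_2$ to some element of $\mathcal{M}^\pm$, a contradiction. No further obstacle is expected; the corollary is really a summary statement.
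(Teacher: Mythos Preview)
Your proposal is correct and mirrors the paper's treatment: the corollary is stated there without proof, as an immediate consolidation of the results already established in Section~\ref{secmodel}, and you have correctly matched each of \eqref{z}, \eqref{lsc}, \eqref{b}, \eqref{c}, \eqref{rif} to the relevant proposition (including the subsequence-of-a-subsequence step needed to pass from Proposition~\ref{continu} to the full-sequence statement in \eqref{c}). One small inaccuracy worth noting: Proposition~\ref{propcontK} yields narrow \emph{continuity} of $\mathcal{I}$ but only narrow \emph{lower semicontinuity} of $\cW$, since the term $\int W\,dm$ with $W$ unbounded is merely l.s.c.\ under narrow convergence; this is of course enough for \eqref{lsc}, so your conclusion is unaffected.
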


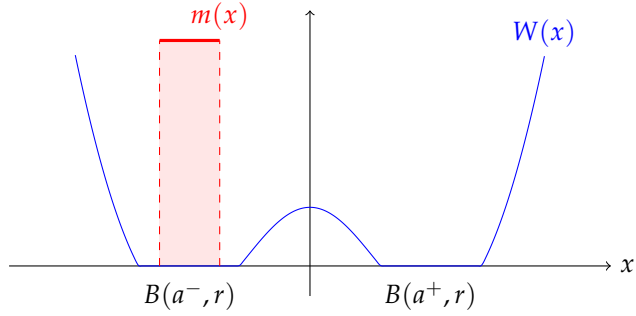
\begin{figure}

\centering
 \begin{tikzpicture}[scale=2]
      \fill [red, opacity=0.1] (-1,0) rectangle (-.6,1.5);
       \draw[->] (-2,0) -- (2,0) node[right] {$x$};
      \draw[->] (0,-.2) -- (0,1.7);
      \draw[scale=1.3,domain=-1.2:1.2,smooth,samples=200,variable=\x,blue] plot ({\x},{max(0, .3 - 3*\x*\x*(1-\x*\x)/(1+ \x*\x)) }) node[above] {$W(x)$};
      \draw[-,red, very thick] (-1,1.5) -- (-.6,1.5) node[above] {$m(x)$};
      \draw[-, red, dashed] (-.6,0) -- (-.6,1.5);
      \draw[-, red, dashed] (-1,0) -- (-1,1.5);
     \node at (.8,-.2) {$B(a^+, r)$};
       \node at (-.8,-.2) {$B(a^-, r)$};
 \end{tikzpicture}
 
 \caption{\footnotesize An example of symmetric potential $W$ and a (compactly supported) minimizer $m$ of $\cW$. }\label{fig2}
 
 \end{figure}
 
\small
\addcontentsline{toc}{section}{References}
\bibliography{etero}
\bibliographystyle{abbrv}

\medskip

\begin{flushright}
\noindent \verb"annalisa.cesaroni@unipd.it"\\
Dipartimento di Scienze Statistiche\\ Universit\`a di Padova\\
Via Battisti 241/243, 35121 Padova (Italy)

\smallskip

\noindent \verb"cirant@math.unipd.it"\\
Dipartimento di Matematica ``Tullio Levi-Civita" \\ Universit\`a di Padova\\
Via Trieste 63, 35121 Padova (Italy)
\end{flushright}
\end{document}